\title{Apollonian random manifolds and their bass notes}
\author{Will Hide}
\address[Will Hide]{Mathematical Institute,
	University of Oxford,
	Andrew Wiles Building, OX2 6GG Oxford,
	United Kingdom}
\email{william.hide@maths.ox.ac.uk}
\author{Bram Petri}
\address[Bram Petri]{Institut de Math\'ematiques de Jussieu--Paris Rive Gauche and  Institut universitaire de France ; Sorbonne Universit\'e and Universit\'e Paris Cit\'e, CNRS, IMJ-PRG, F-75005 Paris, France}
\email{bram.petri@imj-prg.fr}
\author{Anna Roig Sanchis}
\address[Anna Roig Sanchis]{Laboratoire J.A. Dieudonn\'e,
Universit\'e Côte d'Azur,
CNRS,
06108 Nice,
France}
\email{anna.roig-sanchis@univ-cotedazur.fr}
\author{Joe Thomas}
\address[Joe Thomas]{Department of Mathematical Sciences, Durham University, DH1 3LE Durham, United Kingdom}
\email{joe.thomas@durham.ac.uk}
\date{\today}
\pgfplotsset{compat=1.7}
\numberwithin{equation}{section}
\newtheorem{thm}{Theorem}[section]
\newtheorem{prp}[thm]{Proposition}
\newtheorem{lem}[thm]{Lemma}
\newenvironment{prprep}[1]
  {\innerprprep}
  {\endinnerprprep} 
\theoremstyle{definition}
\newtheorem{dff}[thm]{Definition}
\newcommand{\nc}{\newcommand}
\nc{\dmo}{\DeclareMathOperator}
\nc{\abs}[1]{\left| #1 \right|}
\nc{\bigO}[1]{O\left(#1\right)}
\nc{\card}[1]{\left|#1\right|}
\nc{\ceil}[1]{\left\lceil #1 \right\rceil}
\nc{\CC}{\mathbb{C}}
\nc{\dilog}{\mathcal{L}}
\nc{\floor}[1]{\left\lfloor #1 \right\rfloor}
\nc{\ind}{\mathds{1}}
\nc{\ZZ}{\mathbb{Z}}
\nc{\len}[1]{\left| #1 \right|}
\nc{\littleo}[1]{o\left(#1\right)}
\dmo{\Mat}{Mat}
\nc{\NN}{\mathbb{N}}
\nc{\norm}[1]{\left|\left| #1 \right|\right|}
\nc{\QQ}{\mathbb{Q}}
\nc{\RR}{\mathbb{R}}
\nc{\st}[2]{\left\{\, #1 \,:\, #2\,\right\}}
\dmo{\supp}{supp}
\dmo{\tr}{\mathrm{tr}}
\nc{\what}{\widehat}
\dmo{\im}{Im}
\nc{\eps}{\varepsilon}
\dmo{\li}{li}
\dmo{\arccosh}{arccosh}
\dmo{\area}{area}
\dmo{\conv}{conv}
\dmo{\diam}{diam}
\dmo{\DD}{\mathbb{D}}
\dmo{\dist}{\mathrm{d}}
\nc{\HH}{\mathbb{H}}
\dmo{\Isom}{Isom}
\dmo{\MCG}{MCG}
\dmo{\MPL}{MPL}
\dmo{\Mod}{\mathcal{M}}
\dmo{\PL}{PL}
\nc{\Sphere}{\mathbb{S}}
\dmo{\sys}{sys}
\dmo{\kiss}{Kiss}
\dmo{\Teich}{\mathcal{T}}
\nc{\Torus}{\mathbb{T}}
\dmo{\vol}{vol}
\dmo{\WP}{WP}
\dmo{\convTV}{\;\stackrel{\mathrm{TV}}{\longrightarrow}\;}
\nc{\ExV}[2]{\mathbb{E}_{#1}\left[#2\right]}
\dmo{\EE}{\mathbb{E}}
\nc{\Pro}[2]{\mathbb{P}_{#1}\left[#2\right]}
\dmo{\PP}{\mathbb{P}}
\nc{\distTV}[2]{\mathrm{d}_{\rm TV}\left(#1,#2\right)}
\dmo{\UU}{\mathbb{U}}
\nc{\Var}[2]{\mathbb{V}\mathrm{ar}_{#1}\left[#2\right]}
\dmo{\alt}{\mathfrak{A}}
\dmo{\Aut}{Aut}
\dmo{\Fix}{Fix}
\dmo{\GL}{GL}
\dmo{\Hom}{Hom}
\dmo{\id}{Id}
\dmo{\PGL}{PGL}
\dmo{\PSL}{PSL}
\dmo{\PO}{PO}
\dmo{\Rep}{Rep}
\dmo{\SL}{SL}
\dmo{\SO}{SO}
\dmo{\sym}{\mathfrak{S}}
\dmo{\inv}{\mathcal{I}}
\dmo{\orb}{\mathcal{O}}
\dmo{\stab}{Stab}
\dmo{\calA}{\mathcal{A}}
\dmo{\calB}{\mathcal{B}}
\dmo{\calC}{\mathcal{C}}
\dmo{\calD}{\mathcal{D}}
\dmo{\calE}{\mathcal{E}}
\dmo{\calF}{\mathcal{F}}
\dmo{\calG}{\mathcal{G}}
\dmo{\calH}{\mathcal{H}}
\dmo{\calI}{\mathcal{I}}
\dmo{\calJ}{\mathcal{J}}
\dmo{\calK}{\mathcal{K}}
\dmo{\calL}{\mathcal{L}}
\dmo{\calM}{\mathcal{M}}
\dmo{\calN}{\mathcal{N}}
\dmo{\calO}{\mathcal{O}}
\dmo{\calP}{\mathcal{P}}
\dmo{\calQ}{\mathcal{Q}}
\dmo{\calR}{\mathcal{R}}
\dmo{\calS}{\mathcal{S}}
\dmo{\calT}{\mathcal{T}}
\dmo{\calU}{\mathcal{U}}
\dmo{\calV}{\mathcal{V}}
\dmo{\calW}{\mathcal{W}}
\dmo{\calX}{\mathcal{X}}
\dmo{\calY}{\mathcal{Y}}
\dmo{\calZ}{\mathcal{Z}}
\nc{\klav}{Klav\v{z}ar}
\nc{\bi}{\mathbf{i}}
\nc{\bj}{\mathbf{j}}
\nc{\bk}{\mathbf{k}}
\begin{document}

\begin{abstract} 
We study the spectrum of the Laplacian on two models of random hyperbolic $3$-orbifolds, related to the Apollonian group and the super Apollonian group. We determine explicit spectral gaps for these random orbifolds. Moreover, we use our model to investigate the bass note spectrum of the set of hyperbolic $3$-orbifolds.
\end{abstract}

\maketitle

\begin{figure}[ht]
\begin{center}
\includegraphics[scale=1]{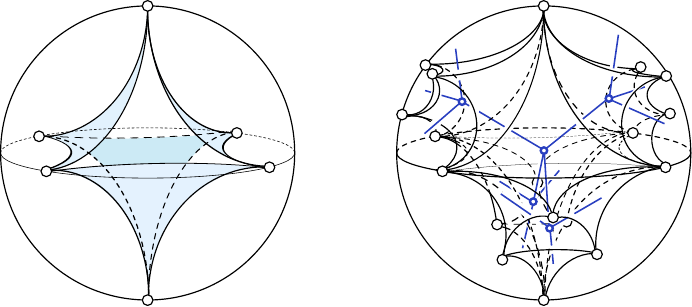}
\caption{A right-angled regular ideal octahedron and the first two generations of the octa-tree.}
\end{center}
\end{figure}

\section{Introduction}
There has been a lot of recent progress on spectral gaps of hyperbolic surfaces. In particular, it is now known that, using random constructions, one can produce surfaces of large area with a near optimal spectral gap (see for instance  \cite{HideMagee,AnantharamanMonk1,AnantharamanMonk2,CalderonMageeNaud,MageePudervanHandel,HideMaceraThomas,ShenWu}) and one can moreover approximate every real number in the interval $[0,\frac{1}{4}]$ with spectral gaps of arithmetic hyperbolic surfaces \cite{Magee_bassnotes,HidePetri}. 

In dimension three, much less is known. It has been conjectured by Magee and the fourth named author \cite[Conjecture 1.5]{MageeThomas} that there exist sequences of closed hyperbolic $3$-manifolds whose volume tends to infinity and whose spectral gap tends to that of hyperbolic $3$-space $\HH^3$ (which equals $1$).

The goal of this paper is to make a first step in dimension three. As a warm-up, we study a model for random geometrically finite $3$-manifolds of infinite volume by taking random degree $2n$ covers $S_n$ of the \textbf{Apollonian orbifold} $\Gamma_{\mathrm{Ap}}\backslash\HH^3$. It is known that the latter has no other spectrum below $1$ than the Patterson--Sullivan eigenvalue $\lambda_0(\Gamma_{\mathrm{Ap}}\backslash\HH^3)$ \cite{KelmerKontorovichLutsko}. Combining work by Bordenave--Collins \cite{BordenaveCollins} with techniques from \cite{HideMagee,Moy,BallmanMondalPolymerakis} one readily derives that, up to an $\eps$, this persists for our random covers:
\begin{thm}\label{thm_main1}
Let $\eps>0$. Then with probability tending to $1$ as $n\to\infty$, 
\[
\sigma(\Delta_{S_n}) \cap [0,1-\eps] = \{\lambda_0(\Gamma_{\mathrm{Ap}}\backslash\HH^3)\}.
\]
\end{thm}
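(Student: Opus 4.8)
The plan is to follow the route that has become standard for spectral gaps of random covers of infinite-volume hyperbolic manifolds, adapting the two-dimensional arguments of \cite{HideMagee,Moy,BallmanMondalPolymerakis} and using Bordenave--Collins \cite{BordenaveCollins} as the probabilistic input. First I would fix the algebraic model for the cover. Recall that $\Gamma_{\mathrm{Ap}}$ is a free product of finite cyclic groups (in particular virtually free), generated by involutions; a uniformly random degree-$2n$ cover $S_n$ then corresponds to a homomorphism $\phi_n\colon\Gamma_{\mathrm{Ap}}\to S_{2n}$ sending each involutive generator to an independent uniformly random fixed-point-free involution of $\{1,\dots,2n\}$, which is why the degree is even. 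The permutation representation $\CC^{2n}$ splits as $\CC\oplus V_n$ with $V_n$ the $(2n-1)$-dimensional standard representation, and correspondingly $\sigma(\Delta_{S_n})=\sigma(\Delta_{\Gamma_{\mathrm{Ap}}\backslash\HH^3})\cup\sigma(\Delta_{\phi_n})$, where $\Delta_{\phi_n}$ acts on $L^2$-sections of the flat bundle over $\Gamma_{\mathrm{Ap}}\backslash\HH^3$ associated to $V_n\circ\phi_n$. By \cite{KelmerKontorovichLutsko} the first set meets $[0,1]$ only in $\{\lambda_0(\Gamma_{\mathrm{Ap}}\backslash\HH^3)\}$, so the theorem reduces to showing that with probability tending to $1$ the twisted operator $\Delta_{\phi_n}$ has no spectrum in $[0,1-\eps]$. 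Since $\Gamma_{\mathrm{Ap}}\backslash\HH^3$ is geometrically finite, the essential spectrum of $\Delta_{\phi_n}$ is $[1,\infty)$ (it is determined by the funnel and rank-one-cusp ends, which are unchanged in a finite cover), so this is a statement about the absence of $L^2$-eigenvalues below $1-\eps$. Finally, $\lambda_0$ itself does survive in $\sigma(\Delta_{S_n})$ for every $n$, since the base ground state pulls back to an $L^2$-eigenfunction on the finite cover, and $\lambda_0<1$ because $\delta(\Gamma_{\mathrm{Ap}})>1$.

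To rule out other small eigenvalues I would work with the resolvent $R_{\phi_n}(s)=(\Delta_{\phi_n}-s(2-s))^{-1}$. By the standard meromorphic continuation of the resolvent on geometrically finite hyperbolic $3$-manifolds (Mazzeo--Melrose, Guillarmou), $R_{\phi_n}(s)$ continues meromorphically to $\CC$, and an $L^2$-eigenvalue $\lambda=s(2-s)\in[0,1)$ with $s\in(1,2]$ corresponds to a pole of $R_{\phi_n}(s)$. Writing $s_\eps=1+\sqrt{\eps}$, so that $s_\eps(2-s_\eps)=1-\eps$, it therefore suffices to show that with high probability $R_{\phi_n}(s)$ has no pole in $\{\,\mathrm{Re}(s)\ge s_\eps\,\}$; this region is disjoint from the continuous spectrum, and one reduces to a compact set of $s$ via the usual high-frequency estimates controlling the regime $|\mathrm{Im}(s)|\to\infty$.

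The core of the argument is a parametrix construction that isolates the dependence on $\phi_n$. Using a partition of unity subordinate to a compact core and the funnel/cusp ends, one glues the free resolvent $R_{\HH^3}(s)$ on $\HH^3$ (which has no poles for $\mathrm{Re}(s)>1$, since $\sigma(\Delta_{\HH^3})=[1,\infty)$) to explicit model resolvents on the ends, producing an approximate inverse $Q_{\phi_n}(s)$ with $(\Delta_{\phi_n}-s(2-s))\,Q_{\phi_n}(s)=I+K_{\phi_n}(s)$, where $K_{\phi_n}(s)$ is trace class and, after careful bookkeeping, is approximated in operator norm --- uniformly in $n$ and in $s$ over the relevant compact region --- by finite sums $\sum_{\gamma\in F}(V_n\circ\phi_n)(\gamma)\otimes A_\gamma(s)$ with $F\subset\Gamma_{\mathrm{Ap}}$ finite and $A_\gamma(s)$ fixed ($n$-independent) trace-class operators on the core. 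The identical construction with the left regular representation $\lambda$ of $\Gamma_{\mathrm{Ap}}$ in place of $V_n\circ\phi_n$ yields an operator $K_{\mathrm{reg}}(s)$ for which $I+K_{\mathrm{reg}}(s)$ is invertible on all of $\{\mathrm{Re}(s)>1\}$: under the identification $L^2(\Gamma_{\mathrm{Ap}}\backslash\HH^3;\,\ell^2(\Gamma_{\mathrm{Ap}}))\cong L^2(\HH^3)$ the regular-representation twist of $\Delta_{\phi_n}$ becomes $\Delta_{\HH^3}$, whose resolvent has no poles there; hence $\sup\|(I+K_{\mathrm{reg}}(s))^{-1}\|<\infty$ on the compact region. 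Now Bordenave--Collins \cite{BordenaveCollins} (for $\Gamma_{\mathrm{Ap}}$ a free product of finite cyclic groups, directly or after passing to a finite-index free subgroup) gives that $V_n\circ\phi_n$ converges strongly to $\lambda$; upgrading strong convergence to operator (trace-class) coefficients in the standard way, this forces $\|K_{\phi_n}(s)-K_{\mathrm{reg}}(s)\|\to 0$ with probability tending to $1$, uniformly on the compact region. Hence $I+K_{\phi_n}(s)$ is invertible there with high probability, $R_{\phi_n}(s)$ is holomorphic on $\{\mathrm{Re}(s)\ge s_\eps\}$, $\Delta_{\phi_n}$ has no eigenvalue below $1-\eps$, and the theorem follows after a net argument in $s$ together with equicontinuity of $s\mapsto K_{\cdot}(s)$.

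The step I expect to be the main obstacle is the operator-theoretic bookkeeping in the parametrix: one must genuinely reduce $K_{\phi_n}(s)$ to finitely many group elements sandwiched between $n$-independent trace-class operators, with the tail (controlled by the exponential decay of the $\HH^3$ resolvent kernel against word length) estimated uniformly over an unbounded range of $\mathrm{Im}(s)$, so that the scalar strong-convergence statement of Bordenave--Collins can actually be brought to bear. Two subsidiary points also need care: verifying that $\Gamma_{\mathrm{Ap}}$ and the funnel/rank-one-cusp structure of $\Gamma_{\mathrm{Ap}}\backslash\HH^3$ are tame enough both for the meromorphic-continuation machinery and for the precise form of Bordenave--Collins invoked; and promoting ``with probability tending to $1$'' from a single $s$ to the whole continuum of relevant parameters, which is handled by the equicontinuity just mentioned together with a net argument.
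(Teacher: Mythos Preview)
Your proposal is correct and follows essentially the same route as the paper: split the spectrum into old eigenvalues (handled by \cite{KelmerKontorovichLutsko}) and new eigenvalues on the twisted bundle, then use Bordenave--Collins strong convergence of $\mathrm{std}_{2n}\circ\phi_n$ to the regular representation of $\Gamma_{\mathrm{Ap}}\cong(\ZZ/2\ZZ)^{\ast 4}$ together with the Hide--Magee resolvent/parametrix machinery to rule out new spectrum below $\lambda_0(\HH^3)-\eps=1-\eps$. The paper packages the latter step as a black-box theorem (citing \cite{HideMagee,Moy,BallmanMondalPolymerakis}) rather than spelling out the parametrix construction as you do, but the content is the same.
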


In the theorem above $\Delta_{S_n}$ denotes the Laplacian acting on functions on $S_n$ and $\sigma(\Delta_{S_n})$ denotes its spectrum on $L^2(S_n)$. We also note that the number $\lambda_0(\Gamma_{\mathrm{Ap}}\backslash\HH^3)$ is by now known up to very high precision, due to Vytnova--Wormell \cite{VytnovaWormell}. Moreover, in this type of setting it is possible to produce subgroups with an arbitrarily small spectral gap \cite{JakobsonNaudSoares}.

Our main contribution is to the case of lattices in the isometry group $\mathrm{Isom}(\HH^3)$. We define a model for random subgroups of the \textbf{super Apollonian group} $\Gamma_{\mathrm{SA}}$: the group generated by the reflections in the faces of a regular right-angled ideal octahedron. This group admits a surjection to a fourfold free product
\[
\Gamma_{\mathrm{SA}} \to (\ZZ/2\ZZ)^{\ast 4}
\]
in which the reflections corresponding to four non-adjacent faces are sent to the generators of the four copies of $\ZZ/2\ZZ$ and the other reflections are trivialized. We will write $\Gamma_\infty = \ker(\Gamma_{\mathrm{SA}}\to (\ZZ/2\ZZ)^{\ast 4})$, a group that we will dub the \textbf{infilonian group}. 

We consider degree $2n$ covers of $M_n\to\Gamma_{\mathrm{SA}}\backslash\HH^3$ whose monodromy factors through this map (for a precise definition see Section \ref{sec_prelim}). As we will explain in Section \ref{sec_relation_models} below, this model is closely related to the model of random $3$-manifolds with boundary studied in \cite{PetriRaimbault,RoigSanchis1,RoigSanchis2}. 

The orbifolds $M_n$ have mirrors, coming from the lifts of the mirrors in $\Gamma_{\mathrm{SA}}\backslash\HH^3$. Exactly half of the mirrors  of $\Gamma_{\mathrm{SA}}$ lift to $M_n$: exactly those that correspond to the generators of $\Gamma_{\mathrm{SA}}$ that we trivialize in the surjection onto $(\ZZ/2\ZZ)^{\ast 4}$ lift. We can double $M_n$ along these mirrors, from which we obtain a double cover $DM_n\to M_n$. Note that $DM_n$ is a manifold (as opposed to an orbifold). 

The Laplacian spectrum of $DM_n$ splits into spectrum on even and spectrum on odd functions with respect to the action of the Deck group of the covering map $DM_n\to M_n$. Alternatively, we can think of this as Neumann and Dirichlet spectrum on $M_n$, in which we interpret the mirrors as boundary.  

We will prove the following result:
\begin{thm}\label{thm_main2}
Let $\eps>0$. Then as $n\to\infty$, the following hold.
\begin{enumerate}
\item For the random orbifolds $M_n$, let $\lambda_1(M_n)$ denote the first eigenvalue of their Laplacian
\[
\lambda_1(M_n) \to \lambda_0(\Gamma_\infty\backslash\HH^3),
\]
in probability.
\item\label{item_doubles} Let $\lambda_1^N(DM_n)$ and $\lambda_1^D(DM_n)$ denote the first eigenvalue of the Laplacian on even and odd functions on $DM_n$ respectively. Then
\[
\lambda_1^N(DM_n) \to \lambda_0(\Gamma_\infty\backslash\HH^3)
\]
in probability, and
\[
\lambda_1^D(DM_n) > 1- \eps
\]
with high probability.
\end{enumerate}
\end{thm}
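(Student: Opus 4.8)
The plan is to treat the three assertions through a common framework: relate the spectrum of the random objects $M_n$ and $DM_n$ to the spectrum of a suitable infinite-volume ``base'' orbifold together with a spectral contribution governed by a random representation of the relevant free product, and then apply a Bordenave--Collins-type strong convergence input. Concretely, I would first set up the double cover $DM_n \to M_n$ and observe that $L^2(DM_n)$ decomposes into the even part (functions pulled back from $M_n$, i.e.\ Neumann spectrum on $M_n$) and the odd part (Dirichlet spectrum on $M_n$). Simultaneously, $DM_n$ is a random degree $2n$ cover of the fixed finite-volume orbifold $DM_1 = D(\Gamma_{\mathrm{SA}}\backslash\HH^3)$, whose monodromy factors through the surjection onto $(\ZZ/2\ZZ)^{\ast 4}$ composed with a uniformly random homomorphism to $S_{2n}$ (or the analogous permutation model of the excerpt). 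The key point is that the ``new'' spectrum of $DM_n$ — the part orthogonal to the pullback from $DM_1$ — is controlled by the Laplacian on $\Gamma_\infty\backslash\HH^3$ twisted by the random permutation representation of $(\ZZ/2\ZZ)^{\ast 4}$, since $\Gamma_\infty$ is by definition the kernel of $\Gamma_{\mathrm{SA}} \to (\ZZ/2\ZZ)^{\ast 4}$.

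Next I would invoke strong convergence: by Bordenave--Collins \cite{BordenaveCollins}, a uniformly random permutation representation of the free product $(\ZZ/2\ZZ)^{\ast 4}$ of dimension $2n$ converges strongly, with high probability as $n\to\infty$, to the regular representation (equivalently, the complement of the trivial representation converges to the left-regular representation $\lambda_{(\ZZ/2\ZZ)^{\ast 4}}$). Transferring this to the geometric setting via the resolvent / heat-kernel comparison techniques of \cite{HideMagee,Moy,BallmanMondalPolymerakis}, the ``new'' spectrum of $DM_n$ below the appropriate threshold converges to the $L^2$-spectrum of the universal cover object, namely to $\sigma(\Delta_{\Gamma_\infty\backslash\HH^3})$ on the relevant function space. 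On the odd (Dirichlet) part, the Patterson--Sullivan bottom eigenvalue $\lambda_0(\Gamma_\infty\backslash\HH^3)$ is \emph{not} attained because the corresponding eigenfunction is even under the mirror symmetry (it is the positive base eigenfunction, which has Neumann rather than Dirichlet boundary behaviour on the mirrors of $M_n$); thus on odd functions the bottom of the new spectrum is pushed up to $1$, giving $\lambda_1^D(DM_n) > 1-\eps$ with high probability. On the even (Neumann) part the base eigenvalue survives and one gets $\lambda_1^N(DM_n) \to \lambda_0(\Gamma_\infty\backslash\HH^3)$; since $\lambda_1^N(DM_n) = \lambda_1(M_n)$, assertion (1) follows as well.

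To make the convergence two-sided I would, as usual, separately establish the matching lower bound on $\lambda_1(M_n)$ — equivalently an upper bound $\lambda_1(M_n) \le \lambda_0(\Gamma_\infty\backslash\HH^3) + o(1)$ — by constructing an approximate test function supported on a region of $M_n$ that looks, on a large ball, like a fundamental domain of $\Gamma_\infty\backslash\HH^3$, using that with high probability the random cover has large injectivity radius / few short ``new'' closed geodesics (a Benjamini--Schramm-type statement for this model, which one can extract from the permutation model as in \cite{PetriRaimbault,RoigSanchis1,RoigSanchis2}). The reverse inequality, that nothing new appears below $\lambda_0(\Gamma_\infty\backslash\HH^3)$ on the Neumann side and below $1$ on the Dirichlet side, is exactly the content of the strong convergence step.

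The main obstacle I anticipate is the geometric transfer step: Bordenave--Collins gives strong convergence of the \emph{combinatorial/group-algebraic} operators, and one must convert this into a statement about the continuous Laplacian on the non-compact orbifolds $M_n$, $DM_n$ with their mirror boundary, uniformly down to the spectral threshold. This requires care because $\Gamma_{\mathrm{SA}}\backslash\HH^3$ has cusps and mirrors, so the comparison of resolvents (or of heat kernels, following \cite{HideMagee,Moy,BallmanMondalPolymerakis}) must handle the cusp neighbourhoods and the reflection boundary conditions, and one must make sure the parity decomposition is respected throughout. Once that transfer is in place — together with the elementary but crucial observation about the parity of the Patterson--Sullivan eigenfunction under the mirror involution — the three statements drop out together.
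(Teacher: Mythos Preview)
Your overall framework—Bordenave--Collins strong convergence for $(\ZZ/2\ZZ)^{\ast 4}$ plus a Hide--Magee/Moy/Ballmann--Mondal--Polymerakis transfer to the Laplacian, combined with the even/odd decomposition of $L^2(DM_n)$—matches the paper's. But there are two genuine gaps.

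\textbf{You never control the old spectrum.} The strong-convergence machinery only tells you that the \emph{new} spectrum of $M_n$ (resp.\ $DM_n$) has nothing below $\lambda_0(\Gamma_\infty\backslash\HH^3)-o(1)$ (resp.\ $1-o(1)$ on the odd side). But every eigenvalue of the base $\Gamma_{\mathrm{SA}}\backslash\HH^3$ lifts to $M_n$; if $\Gamma_{\mathrm{SA}}\backslash\HH^3$ had an eigenvalue $\mu\in(0,\lambda_0(\Gamma_\infty\backslash\HH^3))$, then $\lambda_1(M_n)\le\mu$ for all $n$ and the convergence fails. Ruling this out is not free: the paper needs Proposition~\ref{prp_residual_spectrum} (a scattering-matrix computation showing $\Gamma(2)$, and hence $\Gamma_{\mathrm{SA}}$, has no nontrivial residual spectrum) together with the Roelcke-type argument of Proposition~\ref{prp_spectrum_building_blocks} (no cuspidal eigenvalues below $\pi^2/6$). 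You mention the pullback from ``$DM_1$'' but never say why it contributes nothing in $(0,1)$; this is the bulk of Section~\ref{sec_spectral_input} and cannot be skipped.

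\textbf{Your Dirichlet argument is incomplete.} Saying that the Patterson--Sullivan ground-state eigenfunction is even only shows that the single number $\lambda_0(\Gamma_\infty\backslash\HH^3)$ is absent from the odd spectrum of $D(\Gamma_\infty\backslash\HH^3)$. It does not rule out other odd eigenvalues in the interval $(\lambda_0(\Gamma_\infty\backslash\HH^3),1)$, and a priori nothing prevents $\Gamma_\infty\backslash\HH^3$ from having several discrete eigenvalues below $1$. The paper's argument is different and one line: the odd spectrum of $D(\Gamma_\infty\backslash\HH^3)$ is the Dirichlet spectrum of the octa-tree $T$, and since $T$ is a domain in $\HH^3$, domain monotonicity gives $\lambda_0^D(T)\ge\lambda_0(\HH^3)=1$ directly. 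Your parity observation is correct but insufficient; replace it with domain monotonicity.
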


Using complicated enough Dehn fillings, we can also use Theorem \ref{thm_main2} (\ref{item_doubles}) to produce a sequence of closed hyperbolic $3$ manifolds whose volume tends to infinity and whose first Laplacian eigenvalue tends to $\lambda_0(\Gamma_\infty\backslash\HH^3)$ (see \cite{ColboisCourtois}).

We also investigate the \textbf{bass note spectrum} of the set of hyperbolic $3$-orbifolds. We define two sets of bass notes:
\[
\mathbf{Bass}\left(\mathrm{Hyp}^3_{\mathrm{f.v.}}\right) = \st{\lambda_1(X)}{X \text{ a hyperbolic } 3\text{-orbifold of finite volume}},
\]
and
\[
\mathbf{Bass}\left(\mathrm{Hyp}^3_{\mathrm{a.}}\right) = \st{\lambda_1(X)}{X \text{ an arithmetic hyperbolic } 3\text{-orbifold}}.
\]
We observe that $\mathbf{Bass}\left(\mathrm{Hyp}_{\mathrm{a.}}\right) \subset \mathbf{Bass}\left(\mathrm{Hyp}_{\mathrm{f.v.}}\right)$. Moreover, by Mostow--Prasad rigidity, both sets above are countable. It has been conjectured by Sarnak \cite[Lecture 1]{Sarnak_Chern} that
\[
\overline{\mathbf{Bass}\left(\mathrm{Hyp}^3_{\mathrm{f.v.}}\right)} = [0,1] \cup E,
\]
where $E\subset (1,\infty)$ is an upper bounded, discrete and infinite subset. We make a first step in the direction of this conjecture:
\begin{thm}\label{thm_main3} We have
\[
[0,\lambda_0(\Gamma_\infty\backslash\HH^3)] \quad \subset \quad \overline{\mathbf{Bass}\left(\mathrm{Hyp}^3_{\mathrm{a.}}\right)}  \quad \subset \quad \overline{\mathbf{Bass}\left(\mathrm{Hyp}^3_{\mathrm{f.v.}}\right)}.
\]
\end{thm}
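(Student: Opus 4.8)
The plan is to prove the chain of inclusions in \thmref{thm_main3} from right to left, with all the content in the leftmost inclusion. The inclusion $\overline{\mathbf{Bass}\left(\mathrm{Hyp}^3_{\mathrm{a.}}\right)}\subset\overline{\mathbf{Bass}\left(\mathrm{Hyp}^3_{\mathrm{f.v.}}\right)}$ is immediate, since every arithmetic hyperbolic $3$-orbifold has finite volume and closure is monotone. So the task is to show that every $\lambda\in[0,\lambda_0(\Gamma_\infty\backslash\HH^3)]$ is a limit of first Laplace eigenvalues of arithmetic hyperbolic $3$-orbifolds. The key point is that the random orbifolds $M_n$ produced in \thmref{thm_main2} are \emph{arithmetic}: $\Gamma_{\mathrm{SA}}$ is a reflection group commensurable with $\PSL_2(\ZZ[\bi])$ (the group of symmetries of the right-angled regular ideal octahedron) and hence all of its finite-index subgroups, including the monodromy groups of the covers $M_n$, are arithmetic. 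Thus \thmref{thm_main2}(1) already gives us arithmetic orbifolds with $\lambda_1$ converging in probability to the right endpoint $\lambda_0(\Gamma_\infty\backslash\HH^3)$; in particular, for every $\eps>0$ there is an arithmetic orbifold with $\lambda_1$ within $\eps$ of $\lambda_0(\Gamma_\infty\backslash\HH^3)$, so $\lambda_0(\Gamma_\infty\backslash\HH^3)\in\overline{\mathbf{Bass}\left(\mathrm{Hyp}^3_{\mathrm{a.}}\right)}$.

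To fill in the rest of the interval $[0,\lambda_0(\Gamma_\infty\backslash\HH^3)]$, I would interpolate: starting from an orbifold $M_n$ with $\lambda_1(M_n)$ close to $\lambda_0(\Gamma_\infty\backslash\HH^3)$, I want to continuously deform (or take further covers of) it so that $\lambda_1$ sweeps down through all smaller values while the orbifold stays arithmetic. The cleanest route is to exploit the mirror structure and Dehn-filling/covering flexibility: taking a tower of covers $M_{n_k}\to\cdots\to M_n$ one expects $\lambda_1$ to decrease, and by choosing the covers appropriately one can make $\lambda_1$ land near any prescribed value in $(0,\lambda_0(\Gamma_\infty\backslash\HH^3))$, in the spirit of the interval-filling arguments of Magee \cite{Magee_bassnotes} and Hide--Petri \cite{HidePetri}. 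Concretely, one shows: (a) arbitrarily small positive first eigenvalues are realized by arithmetic orbifolds (e.g. long thin covers, or near-degenerating sequences in a family of arithmetic orbifolds, pushing $\lambda_1\to 0$ while staying arithmetic); (b) the set of realized values has no gaps, using a continuity/intermediate-value argument along a path of covers combined with the uniform control coming from \thmref{thm_main2}. Since $0$ itself is approximated and $\lambda_0(\Gamma_\infty\backslash\HH^3)$ is attained as a limit, density in the whole closed interval follows.

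The main obstacle is step (b): controlling $\lambda_1$ along the interpolating family finely enough to guarantee \emph{no gaps} in the closure. Unlike the surface case, where one has very precise trace-formula and Selberg-type input, here the estimates on $\lambda_1$ of covers of $\Gamma_{\mathrm{SA}}\backslash\HH^3$ are softer. I would handle this by combining two facts: first, a lower semicontinuity / monotonicity statement for $\lambda_1$ under the covering operations we use (so the eigenvalue cannot jump up), and second, the fact that \thmref{thm_main2} gives an \emph{upper} bound $\lambda_1(M_n)\le\lambda_0(\Gamma_\infty\backslash\HH^3)+\eps$ together with the possibility of forcing $\lambda_1$ low via a suitably chosen sub-cover with a long neck or a nearly-degenerating arithmetic piece. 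If a direct continuity argument is too delicate, an alternative is to use known explicit families of arithmetic $3$-orbifolds (e.g. congruence covers of $\PSL_2(\ZZ[\bi])\backslash\HH^3$ and their finite-volume relatives) whose $\lambda_1$ is computed or bounded, and show these already accumulate on a dense subset of $[0,\lambda_0(\Gamma_\infty\backslash\HH^3)]$; the contribution of \thmref{thm_main2} is then precisely to certify that one can push all the way up to the endpoint $\lambda_0(\Gamma_\infty\backslash\HH^3)$, which is the new ingredient beyond what was previously accessible.
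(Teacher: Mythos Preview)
Your outline correctly identifies the trivial inclusion, the arithmeticity of all finite-index subgroups of $\Gamma_{\mathrm{SA}}$, and that \thmref{thm_main2} supplies the right endpoint. The gap is in your step~(b): you do not have a mechanism for the intermediate-value step, and the mechanisms you gesture at do not work.

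There is no continuous path of arithmetic $3$-orbifolds along which $\lambda_1$ varies continuously --- by Mostow--Prasad rigidity the orbifolds in question form a countable discrete set, so any ``continuity'' argument has to be a discrete one: a finite chain of orbifolds connecting one with $\lambda_1\approx 0$ to one with $\lambda_1\approx\lambda_0(\Gamma_\infty\backslash\HH^3)$, together with a proof that consecutive terms differ in $\lambda_1$ by at most~$\eta$. Your proposal neither specifies the chain nor gives any reason the increments are small. The monotonicity you invoke (``$\lambda_1$ cannot jump up under covers'') is false in general and would in any case only bound one direction. Likewise, ``long thin covers'' or ``near-degenerating arithmetic pieces'' produce small $\lambda_1$, but give no control on how the values are distributed in between.

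What the paper actually does is the following. Fix a random $\Gamma_n<\Gamma_{\mathrm{SA}}$ and look at the finite set $\calH_2(\Gamma_n)$ of degree-two covers of $\Gamma_n\backslash\HH^3$. The disconnected double cover has $\lambda_1=0$, and the orientation double cover $DM_n=(\Gamma_n\cap\Gamma(2))\backslash\HH^3$ has $\lambda_1>\lambda_0(\Gamma_\infty\backslash\HH^3)-\eta/2$ by \thmref{thm_main2}. Degree-two covers of $\Gamma_n\backslash\HH^3$ correspond to signings of the dual $4$-regular graph $G_n$, and any two signings are connected by a sequence of single-edge flips; geometrically, each flip is a \emph{simple switching}: cut the cover open along the two preimages of one interior ideal triangle and reglue. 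The substantive part of the paper (the $\Gamma_\infty$-tangle-freeness and the eigenfunction-flattening proposition in Sections~5--6) is devoted to proving that a single simple switching changes $\lambda_1$ by at most $\eta$, by showing that an eigenfunction for $\lambda_1(Y)$ can be modified near the two triangles, at cost $e^{-C_{\lambda_0}L}$ to its Rayleigh quotient, into a test function that descends to the switched cover~$Y'$. This is the missing idea in your proposal, and it is where essentially all of the work lies.
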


Finally, because it appears in many of our results, it would be nice to understand the numerical value of $\lambda_0(\Gamma_\infty\backslash\HH^3)$. We derive the following crude estimates using the Brooks--Burger transfer method \cite{Brooks_transfer,Burger} and recent work by Coulon \cite{Coulon}:
\begin{prp}\label{prp_main}
We have
\[
\frac{3-\sqrt{5+2\sqrt{3}}}{67-\sqrt{5+2\sqrt{3}}} \leq \lambda_0(\Gamma_\infty\backslash\HH^3) \leq  \delta(\Gamma_{\mathrm{Ap}})  \cdot (1- \delta(\Gamma_{\mathrm{Ap}})  /4)
\]
\end{prp}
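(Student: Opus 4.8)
The plan is to route both inequalities through the critical exponent $\delta(\Gamma_\infty)$ of the infilonian group. By the Patterson--Sullivan (Elstrodt--Patterson--Sullivan, Lax--Phillips) relation between the bottom of the $L^2$-spectrum and the critical exponent, for any discrete $\Gamma<\mathrm{Isom}(\HH^3)$ one has $\lambda_0(\Gamma\backslash\HH^3)=\delta(\Gamma)(2-\delta(\Gamma))$ as soon as $\delta(\Gamma)\geq 1$, and $\lambda_0(\Gamma\backslash\HH^3)=1$ otherwise. The argument below shows $\delta(\Gamma_\infty)>1$, so $\lambda_0(\Gamma_\infty\backslash\HH^3)=\delta(\Gamma_\infty)(2-\delta(\Gamma_\infty))$; since $t\mapsto t(2-t)$ is decreasing on $[1,2]$, the upper bound on $\lambda_0$ is equivalent to a lower bound on $\delta(\Gamma_\infty)$, while the lower bound on $\lambda_0$ will be proved directly.

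\emph{Upper bound.} The first step is structural: I would identify the Apollonian group with a geometric section of the quotient map. Concretely, the subgroup of $\Gamma_{\mathrm{SA}}$ generated by the four reflections in the non-adjacent faces --- the generators that are \emph{not} trivialised in $\Gamma_{\mathrm{SA}}\to(\ZZ/2\ZZ)^{\ast 4}$ --- should be conjugate to $\Gamma_{\mathrm{Ap}}$ and map isomorphically onto $(\ZZ/2\ZZ)^{\ast 4}$, so that $\Gamma_{\mathrm{SA}}=\Gamma_\infty\rtimes\Gamma_{\mathrm{Ap}}$ with $\Gamma_{\mathrm{Ap}}$ realising the deck group inside $\Gamma_{\mathrm{SA}}$ with critical exponent $\delta(\Gamma_{\mathrm{Ap}})$. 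Feeding this into Coulon's growth estimates for normal subgroups \cite{Coulon} --- which bound $\delta(\Gamma_{\mathrm{SA}})-\delta(\Gamma_\infty)$ from above by half the critical exponent of the section --- gives $\delta(\Gamma_\infty)\geq\delta(\Gamma_{\mathrm{SA}})-\tfrac12\delta(\Gamma_{\mathrm{Ap}})=2-\tfrac12\delta(\Gamma_{\mathrm{Ap}})>1$, and hence by the Patterson--Sullivan formula
\[
\lambda_0(\Gamma_\infty\backslash\HH^3)\;\leq\;\Bigl(2-\tfrac12\delta(\Gamma_{\mathrm{Ap}})\Bigr)\cdot\tfrac12\delta(\Gamma_{\mathrm{Ap}})\;=\;\delta(\Gamma_{\mathrm{Ap}})\bigl(1-\delta(\Gamma_{\mathrm{Ap}})/4\bigr).
\]

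\emph{Lower bound.} Here I would use the Brooks--Burger transfer principle \cite{Brooks_transfer,Burger}. The orbifold $\Gamma_\infty\backslash\HH^3$ is a normal Riemannian cover of the finite-volume orbifold $X_0=\Gamma_{\mathrm{SA}}\backslash\HH^3$ with deck group $Q=(\ZZ/2\ZZ)^{\ast 4}$, whose Cayley graph with respect to the four defining involutions is the $4$-regular tree; the bottom of the spectrum of its combinatorial Laplacian $D-A$ equals $4-2\sqrt 3$ (equivalently, the simple random walk on it has Kesten spectral radius $\sqrt 3/2$). The Brooks--Burger comparison between the Laplacian of a cover and the combinatorial Laplacian of the deck group then yields $\lambda_0(\Gamma_\infty\backslash\HH^3)\geq c(X_0)\cdot(4-2\sqrt 3)$ for an explicit $c(X_0)>0$ assembled from the combinatorics of how octahedral tiles of $X_0$ meet under the $Q$-action (each tile meeting finitely many others along faces and edges), a Poincaré/Sobolev constant on a fundamental domain, and the behaviour near the six ideal vertices. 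Carrying this bookkeeping through with the explicit metric data of the regular right-angled ideal octahedron produces the stated quantity $\dfrac{3-\sqrt{5+2\sqrt 3}}{67-\sqrt{5+2\sqrt 3}}$.

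\emph{Main obstacle.} The delicate points are geometric rather than spectral. For the upper bound one must verify that the Apollonian group really is a complement to $\Gamma_\infty$ inside $\Gamma_{\mathrm{SA}}$ --- establishing the splitting $\Gamma_{\mathrm{SA}}=\Gamma_\infty\rtimes\Gamma_{\mathrm{Ap}}$ cleanly --- and that Coulon's normal-subgroup growth bound applies to $\Gamma_{\mathrm{SA}}$, a non-uniform lattice with torsion and with $\Gamma_\infty$ infinitely generated. For the lower bound, the classical Brooks--Burger transfer is cleanest over compact (or convex-cocompact) bases, so the real work is controlling the Rayleigh quotient near the ideal vertices of $X_0$; this cuspidal analysis is precisely what forces $c(X_0)$, and hence the final estimate, to be so small.
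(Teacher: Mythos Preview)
Your upper bound is essentially the paper's argument: pass to the critical exponent via the Elstrodt--Sullivan formula, then use Coulon's inequality $\delta(\Gamma_\infty)\geq \delta(\Gamma_{\mathrm{SA}})-\tfrac12\,\delta(\Gamma_{\mathrm{SA}}/\Gamma_\infty,\Gamma_\infty\backslash\HH^3)$. The paper identifies the action of $\Gamma_{\mathrm{SA}}/\Gamma_\infty$ on $\Gamma_\infty\backslash\HH^3$ with that of the ``dual'' Apollonian group $\Gamma_{\mathrm{Ap}}^\perp$ on the octa-tree $T\subset\HH^3$, so the half-exponent is $\tfrac12\delta(\Gamma_{\mathrm{Ap}})$, as you say. (A small wording issue: the generators that are \emph{trivialised} are $r_1,\dots,r_4$, which generate $\Gamma_{\mathrm{Ap}}\subset\Gamma_\infty$; the section is $\Gamma_{\mathrm{Ap}}^\perp=\langle r_1^\perp,\dots,r_4^\perp\rangle$, another Apollonian group with the same exponent.)

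The lower bound, however, has a genuine gap. With your choice of cover $\Gamma_\infty\backslash\HH^3\to\Gamma_{\mathrm{SA}}\backslash\HH^3$ and deck group $(\ZZ/2\ZZ)^{\ast 4}$, the relevant Schreier graph is the $4$-regular tree, whose spectral gap is $4-2\sqrt{3}$. No amount of ``bookkeeping'' with constants coming from the octahedron will turn $4-2\sqrt{3}$ into the numerator $3-\sqrt{5+2\sqrt{3}}$ in the stated bound; these numbers have different algebraic form, and the surd $\sqrt{5+2\sqrt{3}}$ does not arise from your graph. The paper's bound comes from a \emph{different} tiling: instead of the octahedron $O$, one tiles $T$ by copies of the polytope $P_1$ (a quarter of $O$, six fundamental corners incident to a single face), and works with the supergroup $\Gamma\supset\Gamma_{\mathrm{SA}}$ generated by reflections in the faces of $P_1$. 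The Schreier graph $\calG$ of $\Gamma$ acting on $\Gamma/\Gamma_\infty$ is then the replacement product of the $4$-regular tree by $K_4$, equivalently the Cayley graph of $(\ZZ/4\ZZ)\ast(\ZZ/2\ZZ)$ with all nontrivial elements of each factor as generators; Cartwright--Soardi gives $\lambda_1(\calG)=3-\sqrt{5+2\sqrt{3}}$. Plugging this, $d_{\calG}=4$, and $\mu\geq 1$ (from the Neumann gap of $P_1$ and $P_2$ in Proposition~\ref{prp_spectrum_building_blocks}) into Breuillard's effective Brooks--Burger inequality
\[
\lambda_0(T)\ \geq\ \frac{\mu\,\lambda_1(\calG)}{16\,d_{\calG}+\mu\,\lambda_1(\calG)}
\]
yields exactly $\dfrac{3-\sqrt{5+2\sqrt{3}}}{67-\sqrt{5+2\sqrt{3}}}$.

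So the missing idea is the refinement of the fundamental domain from $O$ to $P_1$. This matters twice over: it produces the graph whose gap matches the stated constant, and it reduces the required Neumann input to the polytopes $P_1$ and $P_2$, whose spectra are controlled by Proposition~\ref{prp_spectrum_building_blocks}. Your version would also need the Neumann gap of two octahedra glued along a face, which the paper never computes. The ``cuspidal analysis'' you flag is not where the work lies; the infinite-covolume issue is handled simply by replacing a putative $\lambda_0$-eigenfunction with compactly supported test functions approximating it.
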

We have $\delta(\Gamma_{\mathrm{Ap}})= 1.3056867280 4987718\ldots$ \cite{VytnovaWormell} and thus
\[
\frac{3-\sqrt{5+2\sqrt{3}}}{67-\sqrt{5+2\sqrt{3}}}  = 0.00141\ldots \quad \text{and} \quad   \delta(\Gamma_{\mathrm{Ap}})  \cdot (1- \delta(\Gamma_{\mathrm{Ap}})  /4) = 0.879482\ldots .
\]
For comparison, we note that the current record holders for the spectral gap are congruence arithmetic groups with a spectral gap of $\frac{77}{81} = 0.9506\ldots $ \cite[Theorem 4.10]{Kim}. In the process of our proof, we also improve this bound for $\Gamma(2)$. We prove that $\Gamma(2)$ has no cuspidal eigenvalues below the first Dirichlet eigenvalue of the right-angled regular ideal ocathedron (which exceeds $1$), see Proposition \ref{prp_eigenvalue_gamma_2}.

\subsection{Further notes and references}
As we will describe below, the first two theorems above rely in part on recent breakthroughs on the strong convergence of unitary representations of finitely generated groups. We will use this as a black box here. For more background on the subject we refer to the two surveys \cite{Magee_survey,vanHandel_survey}.

The question of the bass note spectrum has for instance been studied for graphs \cite{AlonWei,DongMcKenzie} hyperbolic $2$-orbifolds \cite{KravchukMazacPal}, spinors on hyperbolic surfaces \cite{AdveGiri} and $3$-manifolds \cite{BonifacioMazacPal}.

Finally, the Apollonian group and its subgroups are also a classical object of study. We refer to for instance \cite{Sarnak_Apollonian,GLMWY1,BourgainKontorovich,CFH,HaagKertzerRickardsStange} for more on this.

\subsection{Organization and proof strategy}

We start the remainder of this text with a preliminary section on the geometry and spectra of the manifolds involved in our constructions. 

After this, our first goal is to prove Theorems \ref{thm_main1} and \ref{thm_main2}. In order to control the $L^2$-spectrum of our random degree $2n$ cover (of either $\Gamma_{\mathrm{Ap}}\backslash\HH^3$ or $\Gamma_{\mathrm{SA}}\backslash\HH^3$), we need to worry about two sources of spectrum: spectrum coming from the cover corresponding to the subgroup our random covers converge to and spectrum coming from the base. In the case of $\Gamma_{\mathrm{Ap}}$ we understand both very well. Moreover, as abstract groups $\Gamma_{\mathrm{Ap}}\simeq (\ZZ/2\ZZ)^{\ast 4}$.  So if we combine the work of Bordenave--Collins \cite{BordenaveCollins} with that of either Hide--Magee \cite{HideMagee}, Moy \cite{Moy} or Ballmann--Mondal--Polymerakis \cite{BallmanMondalPolymerakis} we arrive at Theorem \ref{thm_main1} quite quickly. 

Theorem \ref{thm_main2} is more work. The same machinery based on strong convergence of permutation representations of $(\ZZ/2\ZZ)^{\ast 4}$ shows that the new eigenvalues of our random covers are close to $\lambda_0(\Gamma_\infty\backslash\HH^3)$. We however need to worry about the old eigenvalues: eigenvalues coming from $\Gamma_{\mathrm{SA}}\backslash\HH^3$. This we deal with in Section \ref{sec_spectral_input}; we show that $\Gamma_{\mathrm{SA}}$ does not have any residual spectrum and any cuspidal eigenvalues need to be at least $\frac{\pi^2}{6}$. In this same section, we prove Proposition \ref{prp_main}. 

The second half of the text is devoted to the proof of Theorem \ref{thm_main3}. The idea is to use further degree two covers of our random covers $M_n\to\Gamma_{\mathrm{SA}}\backslash\HH^3$ to obtain arbitrarily dense collections of spectral gaps in the interval $[0,\lambda_0(\Gamma_\infty\backslash\HH^3)]$. The goal is the same as in \cite{Magee_bassnotes}: we want to show that there is an elementary operation on this set of degree two covers such that:
\begin{enumerate}
\item any pair of degree two covers can be related by a finite number of moves and
\item if two covers differ by one such moves their spectral gaps are close.
\end{enumerate}
If this works, and we can produce two covers of degree two of $M_n$ whose spectral gaps lie at the endpoints of the interval $[0,\lambda_0(\Gamma_\infty\backslash\HH^3)]$, then we obtain the result we want. The elementary move we will use, consists of opening up the degree two cover $Y\to M_n$ along two ideal triangles that correspond to lifts of the same ideal triangle in the interior of $M_n$. We only use ideal triangles in the interior of $M_n$ that itself are lifts of boundary faces of the ideal octahedron. 

In order to prove that this move doesn't affect the spectral gap too much, we want to use the fact that eigenfunctions don't localize too much near the triangle in which we are performing the move. Indeed, if this is the case, we can bump the function off to $0$ along the triangle at little cost to its Rayleigh quotient. In Magee's paper, this is based on tangle-freeness combined with ideas from \cite{Gamburd,GLMST}. However, our orbifolds $M_n$ are not tangle-free. They locally look like $\Gamma_\infty\backslash\HH^3$ rather than $\HH^3$. So we introduce a notion that we dub $\Gamma_\infty$-tangle-freeness. Most of the work of this part of the paper goes into showing that, below the bass note of $\Gamma_\infty\backslash\HH^3$, this suffices for the delocalization properties we need.

\subsection*{Acknowledgements}

We thank Simon André, Fathi Ben Aribi, Claire Burrin, Rémi Coulon, Michael Magee and Frédéric Naud for useful conversations. 

\subsection*{Funding}
JT has received funding from the Leverhulme Trust through a Leverhulme Early Career Fellowship (Grant No. ECF-2024-440). BP was partially supported by the grant ANR-23-CE40-0020-02 ``MOST''.

\section{Preliminaries}\label{sec_prelim}

In this section, we gather some of the basic facts on the geometry and spectra of various hyperbolic orbifolds that will play a role in the rest of this paper. For a proper introduction, we refer to for instance \cite{BenedettiPetronio,Martelli} for the geometry and for example \cite{Iwaniec,CohenSarnak,ElstrodtGrunewaldMennicke} for the spectral theory.

\subsection{The geometry of hyperbolic $3$-space}

All of our calculations will use the upper half space model for hyperbolic $3$-space:
\[
\HH^3 = \st{(x,y,t) \in \RR^3}{t>0},\quad ds^2 = \frac{dx^2+dy^2+dt^2}{t^2}.
\]
We will also often identify the first two coordinates with a copy of the complex plane and write $(z,t)$ for a point in $\HH^3$. 

The isometry group $\mathrm{Isom}(\HH^3)$ of $\HH^3$ can be identified with $\PSL(2,\CC)\rtimes (\ZZ/2\ZZ)$. Here $\PSL(2,\CC)$ is the subgroup of orientation preserving isometries and acts on $\HH^3$ by 
\[
\left[
\begin{array}{cc}
a & b \\
c & d
\end{array}
\right]\cdot (z,t) = \left(\frac{(az+b)\cdot (\overline{cz+d}) + a \overline{c} t^2}{\abs{cz+d}^2+\abs{c}^2t^2},\; \frac{t}{\abs{cz+d}^2+\abs{c}^2t^2} \right)
\]
and, if $\rho$ is the generator for the $\ZZ/2\ZZ$ factor, then 
\[
\rho(z,t) = (\overline{z},t),
\]
which also implies that in the semi-direct product structure, $\rho$ acts on $\PSL(2,\CC)$ by complex conjugation. The group $\PSL(2,\CC)$ is isomorphic to $\PGL(2,\CC)$. The analogous fact is false over many other rings, like $\RR$, and notably in our setting: $\ZZ[i]$.

Given a discrete subgroup $\Gamma < \mathrm{Isom}(\HH^3)$, the quotient $\Gamma\backslash\HH^3$ has the structure of a hyperbolic orbifold. All the groups that we will consider are virtually torsion free, which means that $\Gamma\backslash\HH^3$ has a finite degree cover that is a manifold.

\subsection{The geometry of cusps}\label{sec_geometry_of_cusps}

Most manifolds that we will discuss have cusps. We will sometimes need to normalize certain parameters corresponding to (virtually) parabolic subgroups of the Kleinian groups that appear. We will now explain how we do this. 

Throughout this part of the text, we will assume that $\Gamma_0<\mathrm{Isom}(\HH^3)$ is a lattice and $\Lambda\triangleleft \Gamma_0$ is some normal subgroup. Both are considered fixed throughout. Moreover, $\Gamma<\mathrm{Isom}(\HH^3)$ is a lattice (that does vary) such that $\Lambda<\Gamma<\Gamma_0$. Because $\Gamma$ is a lattice, it is of finite index inside $\Gamma_0$.

Because $\Gamma_0$ is a lattice, the cusps of $\Gamma_0$ are all of rank two. That is, if $\mathbf{a}\in\partial_\infty\HH^3$ is a fixed point of some parabolic element of $\Gamma_0$, then the group $\Big(\Gamma_0\Big)_{\mathbf{a}}$ of elements of $\Gamma_0$ that fix $\mathbf{a}$ contains a subgroup of finite index that is abstractly isomorphic to $\ZZ^2$ and in which every non-trivial element is parabolic. Moreover, again because $\Gamma_0$ is a lattice, it has finitely many cusps ($\Gamma_0$-orbits of parabolic fixed points). 

We will once and for all fix a set of representatives $\mathbf{A}=\{\mathbf{a}_1,\ldots\mathbf{a}_m\}\subset \partial_\infty\HH^3$ for the cusps of $\Gamma_0$. For every $\mathbf{a}\in\mathbf{A}$, we will also once and for all fix a Möbius transformation $h_{\mathbf{a}}\in\PSL(2,\CC)$ such that 
\[
h_{\mathbf{a}}(\mathbf{a})=\infty.
\]
This also allows us to define the \textbf{height} of any point $x\in\HH^3$ with respect to the cusp $\mathbf{a}$:
\[
t_{\mathbf{a}}(x) = \max\st{t(h_{\mathbf{a}}\cdot\gamma\cdot x)}{\gamma\in\Gamma_0},
\]
where $t:\HH^3\to\RR$ denotes the vertical coordinate in the upper half-space model for $\HH ^3$. Another way of phrasing this is to say that we measure the height of
\[
[x] \quad\in \quad h_{\mathbf{a}}\cdot\Big(\Gamma_0\Big)_{\mathbf{a}}\cdot h_{\mathbf{a}}^{-1} \; \Big\backslash \; \HH^3,
\] 
which is a cusp obtained by quotienting $\HH^3$ by a \textbf{standard} parabolic group 
\[
h_{\mathbf{a}}\cdot\Big(\Gamma_0\Big)_{\mathbf{a}}\cdot h_{\mathbf{a}}^{-1} = \left\langle \left[\begin{array}{cc} 1 & w_1 \\ 0 & 1 \end{array}\right], \left[\begin{array}{cc} 1 & w_2 \\ 0 & 1 \end{array}\right]\right\rangle
\]
where $w_1,w_2\in\CC$ are linearly independent over $\RR$.

With respect to $\Gamma$, the cusps of $\Gamma_0$ split into finitely many equivalence classes. Because we're not assuming $\Gamma$ is normal in $\Gamma_0$, the shapes of these cusps may vary, even when they correspond to the same cusp in $\Gamma_0$. If $\mathbf{b}$ is a cusp of $\Gamma$, that lies in the $\Gamma_0$-equivalence class of the cusp $\mathbf{a}$ of $\Gamma_0$, we define the height with respect to $\mathbf{b}$ as
\[
t_{\mathbf{b}}(x) = \max\st{t(h_{\mathbf{a}}\cdot\gamma_\mathbf{b}\cdot \gamma\cdot x)}{\gamma\in\Gamma},
\]
where $\gamma_{\mathbf{b}}\in\Gamma_0$ is any element such that $\gamma_{\mathbf{b}}(\mathbf{b}) = \mathbf{a}$ (one easily checks that the choice of $\gamma_{\mathbf{b}}$ is of no influence), which can again be thought of as measuring the height in a standard model for the cusp. The \textbf{area} of a cusp $\mathbf{b}$ of $\Gamma$ is defined as
\[
\mathrm{area}_{\Gamma}(\mathbf{b}) = \mathrm{area}\left(\Gamma_{\mathbf{b}}\backslash\st{x\in \HH^3}{t_{\mathbf{b}}(x)=1}\right).
\]

Because we're not assuming that $\Lambda<\Gamma_0$ is of finite index, multiple things can happen to a cusp $\mathbf{a}$ of $\Gamma_0$. Indeed, inside $\Lambda$, the cusp $\mathbf{a}$ might either disappear (if none of the parabolics fixing it are contained in $\Lambda$) or it might lift to a rank one or rank two cusp. Note that the fact that $\Lambda\triangleleft\Gamma_0$ implies that what happens is uniform over the lifts of $\mathbf{a}$. We will write $\mathrm{rank}_{\Lambda}(\mathbf{a})\in\{0,1,2\}$ for the rank of $\mathbf{a}$ with respect to $\Lambda$. Note furthermore that this expression also makes sense for cusps of $\Gamma$. Indeed, every cusp $\mathbf{b}$ of $\Gamma$ corresponds to a unique cusp $\mathbf{a}$ of $\Gamma_0$ and thus we may write  $\mathrm{rank}_{\Lambda}(\mathbf{b})=\mathrm{rank}_{\Lambda}(\mathbf{a})$. 

Finally, we note that if $\mathrm{rank}_{\Lambda}(\mathbf{a})=2$, then $\Lambda_{\mathbf{a}}<\Big(\Gamma_0\Big)_{\mathbf{a}}$ is of finite index and the area $\area_{\Lambda}(\mathbf{a})$ is finite. If $\mathrm{rank}_{\Lambda}(\mathbf{a})=1$, then the corresponding area is no longer finite. However, we can still speak of the \textbf{length} $\mathrm{length}_{\Lambda}(\mathbf{a})$ of the cusp $\mathbf{a}$. This is the minimal positive translation length of an element of $\Lambda_{\mathbf{a}}$ on the Euclidean plane $\st{x\in\HH^3}{t_{\mathbf{a}}(x)=1}$.

\subsection{Spectral theory}

Now we need some basic facts on the spectral theory of $3$-manifolds with cusps. Suppose that $\Gamma<\mathrm{Isom}(\HH^3)$ is a non-uniform lattice. In this case, the spectrum of the Laplacian $L^2(\Gamma\backslash\HH^3)$ splits into continuous and discrete spectrum. The former comes from generalized Eisenstein series and the latter from cusp forms and residues of poles of Eisenstein series. The last of these is called \textbf{residual spectrum}. 

More concretely, to each cusp, thought of as the $\Gamma$-orbit of a parabolic fixed point $\mathbf{a}\in\partial\HH^3$, we can associate an \textbf{Eisenstein series}
\[
E_{\mathbf{a}}(p,s) = \sum_{\gamma\in \Gamma_{\mathbf{a}}'\backslash \Gamma} t(h_{\mathbf{a}}\cdot \gamma\cdot p)^s,\quad p\in\HH^3,\mathrm{Re}(s) > 2,
\]
where, as above, $\Gamma_{\mathbf{a}}$ denotes stabilizer of $\mathbf{a}$ in $\Gamma$ and $\Gamma_{\mathbf{a}}'$ denotes the subgroup generated by the parabolic elements in $\Gamma_{\mathbf{a}}$. Likewise, $h_{\mathbf{a}}\in\PSL(2,\CC)$ still is some Möbius transformation such that $h_{\mathbf{a}}(\mathbf{a})=\infty$. As a function of $s$, this series can be meromorphically continued to a function that has no poles, except for a finite number of simple poles in the interval $(1,2]$. The pole at $2$ corresponds to constant functions. The other poles give rise to discrete spectrum in the interval $(0,1)$.

In each cusp we can write a Fourier expansion
\[
E_{\mathbf{a}}(h_{\mathbf{b}}^{-1}p,s) = \delta_{\mathbf{ab}}\cdot t(p)^s + \phi_{\mathbf{ab}}(s) \cdot t(p)^{2-s} + g_{\mathbf{ab}}(p,s),
\]
where $g_{\mathbf{ab}}(p,s)$ consists of all the non-zero Fourier coefficients and decays exponentially as a function of $t(p)$. Moreover, if $E_{\mathbf{a}}$ has a pole at $s\in (1,2)$, then one of the $\phi_{\mathbf{ab}}(s)$ has a pole. The matrix 
\[
\Phi(s) = \Big(\phi_{\mathbf{ab}}(s)\Big)_{\mathbf{ab}},
\]
where $\mathbf{a}$ and $\mathbf{b}$ run over a full set of representatives of the cusps, is called the \textbf{scattering matrix} of $\Gamma$.

\subsection{The super Apollonian group}\label{sec_SA}

Let us now formally define the first of three groups that appear in our constructions: the super Apollonian group $\Gamma_{\mathrm{SA}}$. As an abstract group, this is the right-angled Coxeter group associated to the $1$-skeleton of the $3$-cube (see Figure \ref{pic_cube}). 
\begin{figure}[h]
\begin{center}
\begin{overpic}{pic_cube}
\put(25,17){$r_1$}
\put(70,17){$r_3^\perp$}
\put(25,79){$r_4^\perp$}
\put(70,79){$r_2$}
\put(11,7){$r_2^\perp$}
\put(83,7){$r_4$}
\put(12,89){$r_3$}
\put(81,89){$r_1^\perp$}
\end{overpic}
\caption{The defining graph of $\Gamma_{\mathrm{SA}}$.}\label{pic_cube}
\end{center}
\end{figure}

Concretely, if $\calG=(V,E)$ is the $1$-skeleton of the $3$-cube, then
\[
\Gamma_{\mathrm{SA}} \simeq \langle v\in V|\; [v,w]=e \; \forall \{v,w\} \in E \text{ and } v^2=e\;\forall v\in V\rangle
\]
The elements of $V=\{r_1,r_2,r_3,r_4,r_1^\perp,r_2^\perp,r_3^\perp,r_4^\perp\}$ will be called the \textbf{standard generators} of $\Gamma_{\mathrm{SA}}$. To understand the action of $\Gamma_{\mathrm{SA}}$ on $\HH^3$ we need to describe an embedding of $\Gamma_{\mathrm{SA}}$ into $\mathrm{Isom}(\HH^3)$. In fact, $\Gamma_{\mathrm{SA}}$ can be seen as a subgroup of $\PSL(2,\ZZ[i])\rtimes(\ZZ/2\ZZ)<\PGL(2,\ZZ[i])\rtimes(\ZZ/2\ZZ)$. Because of the difference between $\PGL(2,\ZZ[i])$ and $\PSL(2,\ZZ[i])$ (the latter is of index $2$ in the former) and in particular their congruence subgroup of level $2$, we will use $\PGL(2,\ZZ[i])\rtimes(\ZZ/2\ZZ)$ here.

 The standard generators embed as follows:

\[
r_1 =\left( \left[\begin{array}{cc}
1+2i & -2 \\
2 & -1+2i
\end{array}
 \right], \rho\right),
\quad 
r_1^\perp = \left( \left[\begin{array}{cc}
 1 & 0 \\
 0 & 1
\end{array}
 \right], \rho\right),
\]
\[
r_2= \left( \left[\begin{array}{cc}
1 & 0 \\
2 & -1
\end{array}
 \right], \rho\right),
\quad 
r_2^\perp = \left( \left[\begin{array}{cc}
 1 & 2i \\ 
 0 & 1
\end{array}
 \right], \rho\right)
\]
\[
r_3= \left( \left[\begin{array}{cc}
-1 & 2 \\
0 & 1
\end{array}
 \right], \rho\right),
\quad 
r_3^\perp=  \left( \left[\begin{array}{cc}
 1 & 0 \\ 
 -2i & 1
\end{array}
 \right], \rho\right)
\]
\[
r_4= \left( \left[\begin{array}{cc}
-1 & 0 \\
0 & 1
\end{array}
 \right], \rho\right),
\quad 
r_4^\perp=  \left( \left[\begin{array}{cc}
 1-2i & 2i \\
 -2i & 1+2i
\end{array}
 \right], \rho\right)
\]
see for instance \cite[p. 2298]{CFH}. As Chaubey--Fuchs--Hines--Stange note,
\[
\Gamma_{\mathrm{SA}} = \Gamma(2) \rtimes \langle \rho \rangle,
\]
where $\Gamma(2)$ denotes the level $2$ congruence subgroup of $\PGL(2,\ZZ[i])$ (see Section \ref{sec_residual_spectrum} for a precise definition). We note that the level $2$ congruence subgroup of $\PSL(2,\ZZ[i])$ is of index $2$ in $\Gamma(2)$, which is the reason we work in $\PGL(2,\ZZ[i])$ instead of $\PSL(2,\ZZ[i])$.

The above also means that $\Gamma(2) = \ker\left( \mathrm{or}: \Gamma_{\mathrm{SA}} \to \ZZ/2\ZZ\right)$, where $ \mathrm{or}: \Isom^+(\HH^3) \to \ZZ/2\ZZ$ denotes the orientation homomorphism.

\subsection{The regular right-angled ideal octahedron}

The generators of $\Gamma_{\mathrm{SA}}$ above are exactly the reflections in the faces of a \textbf{regular right-angled ideal octahedron} $O\subset\HH^3$ in standard position: the convex hull in the upper half space model for $\HH^3$ of the vertices $\{0,1,i,\infty,i+1,(i+1)/2\} \subset \partial_\infty\HH^3$. The fact that $O$ is right-angled implies that the group generated by the reflections in its faces indeed has the structure of the right-angled Coxeter group defined by the $1$-skeleton of its dual polytope (the cube). As a result the orbifold $\Gamma_{\mathrm{SA}}\backslash\HH^3$ can be identified with a copy of $O$ in which the faces are interpreted as mirrors.

We call this octahedron $O$ regular because it has octahedral symmetry. Indeed its group of orientation preserving isometries is the group $G=\mathrm{Isom}^+(O)$ generated by  the Möbius transformations
\[
R_3 = \left[ \begin{array}{cc}
1 & -i \\
-i & 0
\end{array}\right] \quad \text{and} \quad R_4 =  \frac{1}{\sqrt{2}}
\left[\begin{array}{cc}
1-i & -1+i \\
0 & 1+i
\end{array}
\right]
\]
of orders three and four respectively. As an abstract group, $G$ is a copy of the symmetric group $\sym_4$. The \textbf{fundamental corner} 
\[
\calC =\st{ (x,y,t)\in\HH^3}{ 0\leq x,y \leq \frac{1}{2},\; x^2+y^2+t^2\geq 1} \subset O.
\]
is a fundamental domain for the action of $G$ on $O$ (see Figure \ref{pic_fund_corner_2} for a picture), which in particular means that we need $24$ copies of $\calC$ to tile $O$. Each face of $O$ is incident to $6$ of these copies of $\calC$. On a side note, we also observe that $\calC$ is half of a fundamental domain for $\PSL(2,\ZZ[i])$ acting on $\HH^3$ (see for instance \cite[Proposition 7.3.9]{ElstrodtGrunewaldMennicke}). Indeed, $\calC$ is a fundamental domain for $\PSL(2,\ZZ[i])\rtimes \langle\rho\rangle$ acting on $\HH^3$.

\begin{figure}[h]
  \begin{center}
     \begin{overpic}{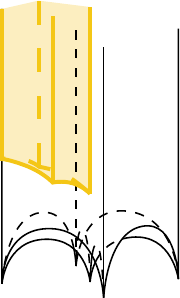}
     \put(-1,0){\tiny{$0$}}
     \put(22,8){\tiny{$i$}}
     \put(23.5,-0.5){\tiny{$\frac{i+1}{2}$}}
     \put(33.5,-4.5){\tiny{$1$}}
     \put(54.5,1.5){\tiny{$i+1$}}
     \end{overpic}
	\caption{The fundamental corner $\calC$.}        \label{pic_fund_corner_2}
	\end{center}
\end{figure}

\subsection{The Apollonian group}
We will think of the Apollonian group $\Gamma_{\mathrm{Ap}}<\Gamma_{\mathrm{SA}}$ as the subgroup generated by reflections in $4$-non-adjacent faces of $O$. In terms of the generators above, we can for instance write $\Gamma_{\mathrm{Ap}} = \langle r_1, r_2, r_3, r_4\rangle$. As an abstract group $\Gamma_{\mathrm{Ap}}\simeq (\ZZ/2\ZZ)^{\ast 4}$. It's called the Apollonian group because its limit set is an Apollonian gasket. 

A fundamental domain for $\Gamma_{\mathrm{Ap}}$ can be obtained as follows. The planes in which the generators reflect, cut $\HH^3$ up into $5$ connected components. The unique component of these that is incident to all $4$ of the reflective planes is a fundamental domain. The quotient of $\Gamma_{\mathrm{Ap}}\backslash\HH^3$ can be thought of as this fundamental domain in which the faces are replaced with mirrors.

\subsection{The infilonian group}\label{sec_infilonian}
In this section we discuss the third group we need. We once and for all fix a surjection
\[
\pi:\Gamma_{\mathrm{SA}} \longrightarrow (\ZZ/2\ZZ)^{\ast 4} = \langle t_1,t_2,t_3,t_4|\; t_1^2=t_2^2=t_3^2=t_4^2=e\rangle
\]
defined by
\[
r_1^\perp\mapsto t_1,\; r_2^\perp\mapsto t_2,\; r_3^\perp\mapsto t_3,\;r_4^\perp\mapsto t_4 
\]
and
\[
r_1\mapsto e,\; r_2\mapsto e,\; r_3\mapsto e,\;r_4\mapsto e. 
\]
Our group $\Gamma_\infty$ will be the kernel of this map. This means that
\[
\Gamma_{\mathrm{Ap}}<\Gamma_\infty<\Gamma_{\mathrm{SA}}.
\]
In fact $\Gamma_\infty$ is the normal closure of $\Gamma_{\mathrm{Ap}}$ in $\Gamma_{\mathrm{SA}}$.

\subsection{The octa-tree}

Let us now describe the geometry of the quotient $\Gamma_\infty\backslash\HH^3$. The cosets of $\Gamma_\infty<\Gamma_{\mathrm{SA}}$ are labelled by elements of $(\ZZ/2\ZZ)^{\ast 4}$, which, through our surjection, we can also identify with elements of the group $\Gamma_{\mathrm{Ap}}^\perp=\langle r_1^\perp,r_2^\perp,r_3^\perp,r_4^\perp\rangle < \Gamma_{\mathrm{SA}}$. The latter is another copy of the Apollonian group, not only as an abstract group, but also as a Kleinian group. We obtain a fundamental domain for $\Gamma_\infty$ by setting
\[
T = \Gamma_{\mathrm{Ap}}^\perp\cdot O.
\]
Geometrically, $T$ can be obtained by gluing countably many copies of $O$ in the pattern of a $4$-valent tree (using $4$ pairwise non-adjacent faces per copy of $O$ for the gluings), which is why will call $T$ the \textbf{octa-tree}. The boundary of $T$ consists of countably many totally geodesic hyperbolic planes, whose boundaries form an Apollonian gasket in $\partial_\infty\HH^3$. The group $\Gamma_\infty$ is generated by the reflections in these planes and as such is isomorphic to 
\[
 \Big(\ZZ/2\ZZ\Big)^{\ast \NN} = \langle t_1,t_2,\ldots |\; 1 = t_1^2=t_2^2=\ldots \rangle.
\]
The quotient $\Gamma_\infty\backslash\HH^3$ can be identified with a copy of $T$ in which the boundary faces have been replaced by mirrors.

\subsection{The model}\label{sec_model}

Now that we have introduced all our groups and spaces it's time to define our models of random covers.

To do so, we let $\sym_{2n}$ denote the symmetric group on $2n$ letters and 
\[
\varphi_n \in \Hom\Big((\ZZ/2\ZZ)^{\ast 4},\sym_{2n}\Big)
\]
be a homomorphism that is obtained by sending every generator of $(\ZZ/2\ZZ)^{\ast 4}$ to a uniformly random \textbf{perfect matching}: a fixed point free involution on $\{1,\ldots,2n\}$. Note that, because $\Gamma_{\mathrm{Ap}} \simeq (\ZZ/2\ZZ)^{\ast 4}$, $\varphi_n$ can be thought of as an element of $\Hom(\Gamma_{\mathrm{Ap}},\sym_{2n})$ as well.

Using $\varphi_n$, we build two random orbifolds: $S_n$ and $M_n$. The first of these, $S_n$, is the orbifold of infinite volume, obtained as the cover of $\Gamma_{\mathrm{Ap}}\backslash \HH^3$ corresponding to $\varphi_n$. That is, we set
\[
S_n = \Gamma_{\mathrm{Ap}} \backslash \Big(\HH^3 \times \{1,\ldots, 2n\}\Big),
\]
where $\Gamma_{\mathrm{Ap}}$ acts through $\varphi_n$ on the second factor. Note that the fact that we're using perfect matchings implies that $S_n$ is a manifold for all $n\geq 1$. Indeed, all the mirrors of the Apollonian orbifold $\Gamma_{\mathrm{Ap}}\backslash\HH^3$ lift to hyperplanes in the interior of $S_n$.

The second, that we will call $M_n$, is the cover of $\Gamma_{\mathrm{SA}}\backslash\HH^3$ corredponding to $\varphi_n\circ\pi$, where $\pi:\Gamma_{\mathrm{SA}}\to (\ZZ/2\ZZ)^{\ast 4}$ is the surjection that we fixed in Section \ref{sec_infilonian}. So, 
\[
M_n = \Gamma_{\mathrm{SA}} \backslash \Big(\HH^3 \times \{1,\ldots, 2n\}\Big),
\]
where $\Gamma_{\mathrm{SA}}$ acts through $\varphi_n\circ\pi$ on the second factor. As we noted in the introduction, the manifolds $M_n$ form the main object of study of this paper.

Let us descibe some consequences of our set-up. Because we factor through $\pi$, this means that the mirrors of $\Gamma_{\mathrm{SA}}\backslash\HH^3$ corresponding to reflections that $\pi$ trivializes (so $\{r_1,r_2,r_3,r_4\}$) all lift to $n$ mirrors of $M_n$. On the other hand, again because we are using perfect matchings, the other mirrors of the super Apollonian orbifold lift to ideal triangles in the interior of $M_n$. We will call these ideal triangles the \textbf{interior faces} of $M_n$. We will call any orbifold that can be obtained through this procedure an \textbf{octahedral orbifold}.

\subsection{The relation to random manifolds with boundary}\label{sec_relation_models}

In \cite{PetriRaimbault}, a model for random manifolds with boundary was introduced. In this model, truncated tetrahedra are glued together along their hexagonal faces according to the combinatorics of a random $4$-regular simple graph. As observed in that paper, these manifolds can be thought of as Dehn fillings of manifolds $Y_{2n}$ obtained by gluing copies of $O$ along $4$ non-adjacent faces (in \cite{PetriRaimbault}, the number of copies of $O$ does not need to be even, but in order to compare, we need to restrict to the even case). Let us briefly discuss what the difference is between the manifolds $Y_{2n}$ and the manifolds $M_n$ of the current paper.

First of all, the underlying graph structure of the two models is different (configurations versus perfect matchings). However, if we condition on there being no loops or multiple edges, then these models are contiguous: sequences of events hold with high probability in one model if and only if they hold with high probability in the other \cite{RobinsonWormald,Janson}. This conditioning is done in \cite{PetriRaimbault} when constructing $Y_{2n}$, but in this paper we don't condition on the graph not having multiple edges. Nonetheless, we could do so here and it wouldn't change our results. 

The main difference between the models is a slight geometric difference: in \cite{PetriRaimbault}, the underlying graph does not define the manifold. Indeed, there each edge is decorated with an element of $\ZZ/3\ZZ$ that describes a choice of one of three possible gluings between the two faces. In this paper, we just glue with the identity.

\section{Some spectral input}\label{sec_spectral_input}

In this section we investigate the spectral theory of various polytopes and orbifolds that play a role in this paper.

\subsection{Residual spectrum}\label{sec_residual_spectrum}

We start with the residual spectrum of certain congruence groups. We will use this only in the case of the level $2$ congruence group. But, since the proof doesn't change if we replace the $2$ by an $N$, we write $N$.

We will prove the fact that for a certain congruence subgroup of $\PGL(2,\ZZ[i])$ there is no non-trivial residual spectrum. This is potentially well known, but we couldn't find it in the literature explicitly (for $\PGL(2,\ZZ[i])$ itself it's implied by \cite[Example 1.14]{CohenSarnak} and \cite[Corollary 2.3]{ElstrodtGrunewaldMennicke_Eisenstein}). As such, we will write down a proof, following standard methods.

For $N \in \NN$, we will write $(N)$ for the principal ideal in $\ZZ[i]$ generated by $N$ and define the level $N$ congruence subgroup of $\PGL(2,\ZZ[i])$ as: 
\[
\Gamma(N) = \ker\Big(\PGL(2,\ZZ[i]) \to \PGL(2,\ZZ[i]/(N))\Big) \]
where the map is the reduction of coefficients modulo $(N)$. 

\begin{prp}\label{prp_residual_spectrum}
For any $N\in\NN$, the only residual spectrum of $\Gamma(N)$ corresponds to constant functions. 
\end{prp}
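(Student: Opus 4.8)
The plan is to follow the standard Langlands spectral decomposition argument: residual spectrum for a rank-one group (here the cusps are rank two, but the relevant Eisenstein series are the degenerate ones attached to minimal parabolics, so the theory behaves like the $\GL_2$ case over the imaginary quadratic field $\QQ(i)$) comes precisely from the poles of the Eisenstein series $E_{\mathbf a}(p,s)$ in the interval $(1,2]$, and by the general theory these poles are exactly the poles of the scattering matrix $\Phi(s)$. So the whole statement reduces to showing that $\det \Phi(s)$ (or each entry $\phi_{\mathbf{ab}}(s)$) has no pole in the open interval $(1,2)$, the pole at $s=2$ being the one attached to constants. First I would recall, following \cite{ElstrodtGrunewaldMennicke} (their Chapter on Eisenstein series for $\PSL(2,\mathcal O_K)$) or \cite{CohenSarnak}, the explicit computation of the constant term: for a congruence group the entries $\phi_{\mathbf{ab}}(s)$ are, up to elementary gamma-type and volume factors, ratios of Hecke $L$-functions / Dedekind-type zeta functions of $\QQ(i)$ and its ray class characters modulo $(N)$. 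Concretely $\phi_{\mathbf{ab}}(s)$ is built from $\frac{\Lambda(2s-2)}{\Lambda(2s-1)}$-type expressions where $\Lambda$ is a completed $L$-function attached to the cusp pair.

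The key steps, in order: (1) set up the Eisenstein series $E_{\mathbf a}(p,s)$ for $\Gamma(N) < \PGL(2,\ZZ[i])$ as in the Preliminaries, with $\mathrm{Re}(s)>2$, and record the meromorphic continuation and functional equation $\mathbf E(p,s) = \Phi(s)\mathbf E(p,2-s)$, $\Phi(s)\Phi(2-s) = I$; (2) invoke the general principle (Langlands; see also the $3$-dimensional account in \cite{ElstrodtGrunewaldMennicke}) that the residual discrete spectrum in $(0,1)$ is in bijection with poles of $\Phi(s)$ in $(1,2)$, and that a pole at $s_0\in(1,2)$ forces some diagonal-ish entry of $\Phi$ to have a genuine pole there with positive residue; (3) plug in the explicit constant-term formula, so that a pole of $\phi_{\mathbf{ab}}$ in $(1,2)$ would come from a pole of a completed Hecke $L$-function $\Lambda(\chi, 2s-2)$ at an argument in $(0,2)$ — but Hecke $L$-functions of $\QQ(i)$ twisted by a ray class character are entire except for the principal character, whose $L$-function (the Dedekind zeta $\zeta_{\QQ(i)}$) has its only pole at argument $1$, i.e. at $2s-2 = 1$... wait, that has to be reconciled with the normalization. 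After matching normalizations, the pole at argument $1$ of $\zeta_{\QQ(i)}$ corresponds precisely to $s=2$ (constants), the denominator $\Lambda(2s-1)$ has no zeros in $(1,2)$ by the non-vanishing of Hecke $L$-functions on the line $\mathrm{Re}=1$ (here at real arguments $>1$ this is elementary from the Euler product), so $\phi_{\mathbf{ab}}(s)$ is holomorphic on $(1,2)$; (4) conclude that the only residual spectrum is the residue at $s=2$, i.e. the constants.

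The main obstacle I expect is bookkeeping rather than conceptual: correctly computing the constant-term (scattering) entries $\phi_{\mathbf{ab}}(s)$ for the congruence group $\Gamma(N)$ of $\PGL(2,\ZZ[i])$ — as opposed to the full modular group — including the finitely many cusps, their widths/areas (normalized as in Section \ref{sec_geometry_of_cusps}), and the ray-class characters that appear, and then checking that no cancellation in the resulting ratio of $L$-functions produces a spurious pole in $(1,2)$. Here one either cites the general $\GL_2$-over-a-number-field computation (e.g. via the Maass--Selberg relations and the known factorization of the scattering determinant into ratios of Hecke $L$-functions, as in \cite{ElstrodtGrunewaldMennicke_Eisenstein}) or does the $\QQ(i)$ case by hand using that $\ZZ[i]$ is a PID with unit group of order $4$. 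A secondary subtlety is that we are in $\PGL(2,\ZZ[i])$, whose level-$2$ congruence subgroup is genuinely larger (index $2$) than that of $\PSL(2,\ZZ[i])$, so I would make sure the cusp count and the scattering matrix are those of the $\PGL$ group; but since residual spectrum can only shrink or stay the same when passing to a finite-index overgroup in the relevant sense, and the $\PSL$ and $\PGL$ Eisenstein series differ only by the $\langle\rho\rangle$-symmetrization, this does not affect the final statement. Once the explicit formula is in hand, the non-vanishing input — Euler products of Hecke $L$-functions of $\QQ(i)$ converge and are nonzero for real arguments $>1$ — is completely elementary, so step (3) is short.
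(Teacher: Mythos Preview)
Your proposal is correct and follows the same overall strategy as the paper: reduce residual spectrum to poles of the scattering matrix in $(1,2)$, express the relevant entries in terms of the Dedekind zeta function of $\QQ(i)$, and conclude that the only pole is at $s=2$.

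The execution differs in two places worth noting. First, the reduction to \emph{diagonal} entries that you cite as a general principle is something the paper proves directly: a short Maa\ss--Selberg computation shows that if $\phi_{\mathbf{ab}}$ has a pole at $\sigma\in(1,2)$ then so does $\phi_{\mathbf{aa}}$ (the point being that the Maa\ss--Selberg identity expresses a non-negative quantity plus $\sum_{\mathbf b}|\phi_{\mathbf{ab}}|^2$ in terms of $\phi_{\mathbf{aa}}$ alone). Second, the explicit constant-term computation is simpler than your sketch suggests: because $\Gamma(N)$ is a \emph{principal} congruence subgroup and $\PGL(2,\ZZ[i])$ has a single cusp, all cusps of $\Gamma(N)$ are conjugate under the big group, so $E_{\mathbf a}(h_{\mathbf a}^{-1}p,s)=E_\infty(p,s)$ and one only needs the single entry $\phi_{\infty\infty}$. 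A direct unfolding gives
\[
\phi_{\mathbf{aa}}(s)=\frac{\pi}{4(s-1)}\,\frac{\zeta_{\QQ(i)}(s-1)}{\zeta_{\QQ(i)}(s)}\cdot\frac{1}{N^{2s+2}}\prod_{p\mid N}\frac{1}{1-p^{-2s}},
\]
with no ray class characters and no completed $L$-functions entering at all; the only possible pole in $\mathrm{Re}(s)>1$ comes from $\zeta_{\QQ(i)}(s-1)$ at $s=2$. Your normalization wobble ($\Lambda(2s-2)/\Lambda(2s-1)$ versus the correct $\zeta_{\QQ(i)}(s-1)/\zeta_{\QQ(i)}(s)$) would be resolved by doing this computation once: the $3$-dimensional Eisenstein series sits at $s$ rather than $2s$, so the shift is by $1$, not $2$. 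The more general machinery you outline would of course also work, but is not needed here.
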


This comes down to proving that the scattering matrix $\Phi(s)$ has no poles in the interval $(1,2)$. We will follow one of the strategies explained in \cite[Section 11.2]{Iwaniec} (which deals with the congruence subgroups of $\PSL(2,\ZZ)$). The proof consists of two lemmas. The first of these is a $3$-dimensional version of \cite[Theorem 6.9]{Iwaniec}:

\begin{lem}
Let $\Gamma < \PGL(2,\CC)$ be a non-uniform lattice with scattering matrix $\Phi = \Big(\phi_{\mathbf{ab}}\Big)_{\mathbf{ab}}$. If for some $\mathbf{a}$, $\mathbf{b}$ the entry $\phi_{\mathbf{ab}}$ has a pole at $s\in(1,2)$ then $\phi_{\mathbf{aa}}$ has a pole at $s$ as well.
\end{lem}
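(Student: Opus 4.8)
The plan is to adapt the standard two–dimensional argument — Iwaniec's proof of \cite[Theorem 6.9]{Iwaniec} — to $\HH^3$; the only structural changes are that the symmetry $s\leftrightarrow 1-s$ becomes $s\leftrightarrow 2-s$ and that the Laplace eigenvalue attached to $s$ is $\lambda(s)=s(2-s)$. Write $s_0\in(1,2)$ for the point at which $\phi_{\mathbf{ab}}$ has a pole. Since (as recalled in the preliminaries) every pole of the Eisenstein series and of the scattering matrix in $(1,2]$ is simple, the residues $\psi_{\mathbf{ac}}:=\mathrm{res}_{s=s_0}\phi_{\mathbf{ac}}(s)$ are well defined for every cusp $\mathbf c$, and $\psi_{\mathbf{ab}}\neq 0$. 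From the constant–term expansion $E_{\mathbf a}(h_{\mathbf b}^{-1}p,s)=\delta_{\mathbf{ab}}t(p)^s+\phi_{\mathbf{ab}}(s)t(p)^{2-s}+g_{\mathbf{ab}}(p,s)$ we see that $s\mapsto E_{\mathbf a}(\cdot,s)$ has a simple pole at $s_0$; its residue $\theta_{\mathbf a}:=\mathrm{res}_{s=s_0}E_{\mathbf a}(\cdot,s)$ satisfies $\Delta\theta_{\mathbf a}=\lambda(s_0)\,\theta_{\mathbf a}$, has constant term $\psi_{\mathbf{ac}}\,t^{2-s_0}$ in cusp $\mathbf c$ (so in particular $\theta_{\mathbf a}\not\equiv 0$), and its remaining Fourier modes decay exponentially in every cusp.

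The analytic input is the Maass--Selberg relation on $\HH^3$. For real $s\in(1,2)$ off the poles and $Y$ large, the truncated Eisenstein series $\Lambda^Y E_{\mathbf a}(\cdot,s)$ — obtained by deleting the constant terms above height $Y$ in every cusp — satisfies an identity of the same shape as in \cite[\S6]{Iwaniec}, of the form
\[
\big\|\Lambda^Y E_{\mathbf a}(\cdot,s)\big\|^2=\frac{c_{\mathbf a}\,Y^{2s-2}-\Big(\textstyle\sum_{\mathbf c}c_{\mathbf c}\,\phi_{\mathbf{ac}}(s)^2\Big)Y^{2-2s}}{2s-2}+2c_{\mathbf a}\phi_{\mathbf{aa}}(s)\log Y-c_{\mathbf a}\phi_{\mathbf{aa}}'(s),
\]
where the $c_{\mathbf c}>0$ are the cross–sectional areas of the cusps for the normalisation of \secref{sec_geometry_of_cusps}. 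This is proved exactly as in two dimensions, by Green's identity on the truncated orbifold together with the asymptotics of the constant terms — only the powers of $t$ and the value $\lambda(s)=s(2-s)$ change — or one may quote \cite{ElstrodtGrunewaldMennicke}. Both sides continue meromorphically near $s_0$. Since $s_0\neq 1$ the two powers of $Y$ are holomorphic at $s_0$; since $\phi_{\mathbf{aa}}$ has at most a simple pole there, the term $2c_{\mathbf a}\phi_{\mathbf{aa}}(s)\log Y$ contributes no double pole; the only contributions to the coefficient of $(s-s_0)^{-2}$ come from $\sum_{\mathbf c}c_{\mathbf c}\phi_{\mathbf{ac}}(s)^2$ and from $\phi_{\mathbf{aa}}'(s)$. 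As truncation commutes with taking residues, that coefficient on the left equals $\|\Lambda^Y\theta_{\mathbf a}\|^2$, a finite non‑negative real number, so comparing Laurent coefficients gives
\[
\big\|\Lambda^Y\theta_{\mathbf a}\big\|^2=c_{\mathbf a}\,\psi_{\mathbf{aa}}-\frac{\Big(\textstyle\sum_{\mathbf c}c_{\mathbf c}\,\psi_{\mathbf{ac}}^2\Big)Y^{2-2s_0}}{2s_0-2}.
\]

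Now the conclusion is immediate. Since $s_0>1$ we have $2s_0-2>0$ and $Y^{2-2s_0}>0$, and since $\psi_{\mathbf{ab}}\neq 0$ we have $\sum_{\mathbf c}c_{\mathbf c}\psi_{\mathbf{ac}}^2\geq c_{\mathbf b}\psi_{\mathbf{ab}}^2>0$; together with $\|\Lambda^Y\theta_{\mathbf a}\|^2\geq 0$ this forces $c_{\mathbf a}\psi_{\mathbf{aa}}\geq\big(\sum_{\mathbf c}c_{\mathbf c}\psi_{\mathbf{ac}}^2\big)Y^{2-2s_0}/(2s_0-2)>0$, whence $\psi_{\mathbf{aa}}=\mathrm{res}_{s=s_0}\phi_{\mathbf{aa}}>0$, i.e.\ $\phi_{\mathbf{aa}}$ has a pole at $s_0$. (Equivalently: were $\phi_{\mathbf{aa}}$ regular at $s_0$, the displayed identity would give $\|\Lambda^Y\theta_{\mathbf a}\|^2<0$. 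One may also let $Y\to\infty$ — legitimate because $s_0>1$ makes $t^{2-s_0}$ square–integrable over a cusp, so $\theta_{\mathbf a}\in L^2$ — obtaining the cleaner $\|\theta_{\mathbf a}\|^2=c_{\mathbf a}\,\psi_{\mathbf{aa}}$.) I expect the one genuinely technical step to be pinning down the $\HH^3$ Maass--Selberg identity with the correct constants and, above all, the correct sign in front of $\phi_{\mathbf{aa}}'(s)$, on which the argument turns; everything else is bookkeeping.
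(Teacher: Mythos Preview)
Your argument is correct and rests on the same idea as the paper's proof: the Maa\ss--Selberg relation for $\|\Lambda^Y E_{\mathbf{a}}(\cdot,s)\|^2$ together with non-negativity of that norm. The only difference is packaging --- you approach the pole along the real axis and read off the $(s-s_0)^{-2}$ Laurent coefficient (obtaining the residue formula $\|\theta_{\mathbf{a}}\|^2=c_{\mathbf{a}}\psi_{\mathbf{aa}}$ as a byproduct), whereas the paper approaches $s_0$ from the complex direction $s=\sigma+iv$ and argues more directly that if $|\phi_{\mathbf{ab}}(\sigma+iv)|$ blows up as $v\to 0$ then so must $\tfrac{1}{v}\,\mathrm{Im}\big(\phi_{\mathbf{aa}}(\sigma+iv)A^{-2iv}\big)$ on the right-hand side.
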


\begin{proof}
We prove this in the same manner as in \cite[Theorem 6.9]{Iwaniec}, based on the Maa\ss--Selberg relations. First, given a parameter $A>0$, we define the truncated Eisenstein series
\[
E^A_{\mathbf{a}}(p,s) = \left\{
\begin{array}{ll}
E_{\mathbf{a}}(p,s) & \text{if } t(h_{\mathbf{b}}^{-1}p) \leq A \text{ for all cusps } \mathbf{b}, \\[2mm]
E_{\mathbf{a}}(p,s)-\delta_{\mathbf{ab}}\cdot t(p)^s - \phi_{\mathbf{ab}}(s)\cdot t(p)^{2-s} & \text{if } t(h_{\mathbf{b}}^{-1}p) > A \text{ for the cusp } \mathbf{b}.
\end{array}
\right.
\]
we will assume that $A$ is chosen so that the cusp neighborhoods $\{t(h_{\mathbf{b}}^{-1}p)>A\}$ project to standard horoball neighborhoods in $\Gamma\backslash\HH^3$ that are pairwise disjoint.

Now a special case of the Maa\ss--Selberg relations (which follow from an application of Stokes' theorem, see for instance \cite[Theorem 3.3.6]{ElstrodtGrunewaldMennicke}) yields, for $\sigma>1$ and $v>0$,
\begin{multline*}
\int_{\calF} \abs{E^A_{\mathbf{a}}(p,\sigma+iv)}^2\; d\mu(p) + \frac{1}{2\sigma-2} A^{2-2\sigma} \sum_{\mathbf{b}} \abs{\phi_{\mathbf{ab}}(\sigma+iv)}^2 \\
= \frac{1}{2\sigma-2} A^{2\sigma-2} -\frac{1}{v} \mathrm{Im}\Big(\phi_{\mathbf{aa}}(\sigma+iv)  A^{-2iv}\Big),
\end{multline*}
where $\calF$ denotes a fundamental domain for $\Gamma$ in $\HH^3$. At first sight, the factor $1/v$ on the right hand side might seem suspicious, but $\phi_{\mathbf{aa}}$ is real on the real line, so at regular points on the real line, the expression on the right hand side does not blow up.

We now observe that the left hand side of this equation is non-negative. This means in particular that, if $\phi_{\mathbf{ab}}(\sigma+iv)$ blows up as $v\to 0$, the point $\sigma$ cannot be a regular point of $\phi_{\mathbf{aa}}$.
\end{proof}

The upshot of the previous lemma is that we only need to prove that the diagonal coefficients of the scattering matrix have no poles in the interval $(1,2)$. This we do by explicitly computing them.

\begin{lem}
Let $N\in\NN$. Then we have
\[
\phi_{\mathbf{aa}}(s) =  \frac{\pi}{4 \cdot (s-1)} \frac{\zeta_{\QQ(i)}(s-1)}{\zeta_{\QQ(i)}(s)} \cdot \frac{1}{N^{2s+2}} \cdot \prod_{p|N} \frac{1}{1-p^{-2s}},
\]
where $\zeta_{\QQ(i)}$ denotes the Dedekind zeta function for $\QQ(i)$.
\end{lem}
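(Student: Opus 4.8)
The plan is to compute the constant (zeroth Fourier) coefficient of the Eisenstein series $E_{\mathbf a}(p,s)$ explicitly and to read off the coefficient of $t(p)^{2-s}$, which by definition is $\phi_{\mathbf{aa}}(s)$; combined with the previous lemma this is all that is needed for \propref{prp_residual_spectrum}. Since $\Gamma(N)$ is normal in $\PGL(2,\ZZ[i])$ and the latter has a single cusp, all cusps of $\Gamma(N)$ are $\PGL(2,\ZZ[i])$-equivalent, so the diagonal scattering coefficients coincide and we may take $\mathbf a = \infty$ with $h_{\mathbf a} = \mathrm{Id}$. For $N\geq 2$ the stabiliser of $\infty$ in $\Gamma(N)$ is the group of translations by the lattice $N\ZZ[i]$ of covolume $N^2$, and the non-trivial cosets in $\Gamma(N)_\infty\backslash\Gamma(N)$ are in bijection with the bottom rows $(c,d)\in\ZZ[i]^2$ with $\gcd(c,d)=1$, $c\equiv 0$ and $d\equiv 1 \pmod{N}$, the left $\Gamma(N)_\infty$-action accounting exactly for the freedom in the top row. (The case $N=1$ is classical, cf.\ the references cited just before the proposition, and carries an extra unit contribution in the stabiliser.)

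I would then unfold. Using the explicit formula $t(\gamma(z,t)) = t/(|cz+d|^2+|c|^2 t^2)$ from the preliminaries, the trivial coset contributes the $t^s$ term, while integrating the remaining terms over the cusp torus $\CC/N\ZZ[i]$ and unfolding the sum over $d$ reduces the archimedean part to the elementary integral $\int_{\CC}(|w|^2+1)^{-s}\,dw = \pi/(s-1)$ (for $\mathrm{Re}(s)>1$). This produces both the factor $\pi/(s-1)$ and the dependence $t^{2-s}$, and leaves
\[
\phi_{\mathbf{aa}}(s) = \frac{\pi}{N^2(s-1)}\sum_{c\in N\ZZ[i]\setminus\{0\}}\frac{\#(c)}{|c|^{2s}},
\]
where $\#(c)$ is the number of admissible residues $d$ modulo $cN\ZZ[i]$ (that is, with $d\equiv 1 \pmod N$ and $\gcd(c,d)=1$).

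The remaining task is to evaluate this Dirichlet series. Writing $c = Nc'$ and splitting $(c')$ into its part coprime to $N$ and its part supported on primes dividing $N$, a short Chinese Remainder Theorem computation shows that $\#(Nc')$ is, up to the fixed power $N^2$ coming from the condition $d\equiv 1 \pmod N$, a multiplicative function of $c'$: on the coprime-to-$N$ part it is the $\ZZ[i]$-Euler totient $\phi$, while on the bad primes it is $N(\cdot)$ (since $d\equiv 1\pmod N$ forces $d$ to be a unit modulo every prime dividing $N$). The associated Dirichlet series then factors as an Euler product; the good part gives $\sum_{\mathfrak c}\phi(\mathfrak c)N\mathfrak c^{-s} = \zeta_{\QQ(i)}(s-1)/\zeta_{\QQ(i)}(s)$ (from $\phi = \mu \ast N$ and the Euler products, using that $\QQ(i)$ has class number one so that passing from ideals to elements only introduces the constant $|\ZZ[i]^{\times}| = 4$), and the bad primes contribute the finite correction $\prod_{p\mid N}(1-p^{-2s})^{-1}$. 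Collecting the powers of $N$ — from the covolume, from $|c|^{2s} = N^{2s}|c'|^{2s}$, and from $|\ZZ[i]/(N^2c')|$ — together with the unit factors yields the stated closed form.

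I expect the only real difficulty to be the bookkeeping of normalisations rather than any single estimate: pinning down $h_{\mathbf a}$ (equivalently the area normalisation of the standard cusp), keeping track of the $\PGL$ versus $\PSL$ distinction over $\ZZ[i]$ — which is exactly where the factor-of-two subtleties with units enter — and the careful count of admissible $d$ at the ramified and inert primes dividing $N$. Once the formula is established, the conclusion of \propref{prp_residual_spectrum} is immediate: for $s\in(1,2)$ the denominator $\zeta_{\QQ(i)}(s)$ is non-vanishing (its Euler product converges for $\mathrm{Re}(s)>1$), the numerator $\zeta_{\QQ(i)}(s-1)$ is holomorphic there (its only pole being at $s=2$, corresponding to constants), and the factors $1/(s-1)$ and $\prod_{p\mid N}(1-p^{-2s})^{-1}$ are finite, so $\phi_{\mathbf{aa}}$ — hence, by the previous lemma, the whole scattering matrix — has no poles in $(1,2)$.
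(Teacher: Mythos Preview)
Your plan is correct and follows essentially the same route as the paper: reduce to the cusp at $\infty$, parametrize $\Gamma(N)_\infty\backslash\Gamma(N)$ by bottom rows $(c,d)$, unfold the $d$-sum to evaluate the archimedean integral $\int_{\CC}(|w|^2+1)^{-s}\,dw=\pi/(s-1)$, and then compute the remaining Dirichlet series as an Euler product split according to whether a Gaussian prime divides $N$. The only cosmetic difference is that the paper packages the count of admissible residues $d$ as $\phi(cN)/\phi(N)$, interpreting it as the order of the kernel of the reduction $\bigl(\ZZ[i]/(cN)\bigr)^*\to\bigl(\ZZ[i]/(N)\bigr)^*$, whereas you go directly via the Chinese Remainder Theorem; both lead to the same Euler factors.
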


\begin{proof}
We will perform a similar computation to that in \cite[Example 1.14]{CohenSarnak}. First of all, we note that $\Gamma=\PGL(2,\ZZ[i])$ has only one cusp: it is proved in for instance \cite[Proposition 7.3.9]{ElstrodtGrunewaldMennicke} for $\PSL(2,\ZZ[i])$ and since this is a subgroup of $\Gamma$, that proves it for the latter as well. The cusp corresponds to the $\Gamma$-orbit of $\infty \in\partial\HH^3$. As such, $\Gamma(N)_{\mathbf{a}} = h_{\mathbf{a}}^{-1}\cdot \Gamma(N)_\infty \cdot h_{\mathbf{a}}$ for any cusp $\mathbf{a}$ of $\Gamma(N)$. This implies that
\[
E_a(h_{\mathbf{a}}^{-1}p,s) = E_\infty(p,s)
\]
for all cusps $\mathbf{a}$ of $\Gamma(N)$.

Now we observe that if $c,d\in\ZZ[i]-\{0\}$ with $(c,d)=(1)$ and
\[
\left[\begin{array}{cc}
a & b \\
c & d
\end{array}\right] \in \Gamma(N) \quad \text{and} 
\quad
\left[\begin{array}{cc}
a' & b' \\
c & d
\end{array}\right] 
\]
then there exists some $m \in (N)$ such that 
\[
a-a' = m\cdot c \quad \text{and} \quad b-b' = m\cdot d.
\]
Indeed, both $ad-bc$ and $a'd-b'c$ are units in $\ZZ[i]$ and hence
\[ 
(a-a')\cdot d = u\cdot (b-b')\cdot c,
\]
where $u\in\ZZ[i]$ is a unit. Because $\ZZ[i]$ is a unique factorization domain and $(c,d)=(1)$, there exists some $m\in\ZZ[i]$ with $a-a'=u\cdot m\cdot c$ and $b-b'=m\cdot d$. Moreover, because $b,b'\in (N)$ and $d = 1\mod (N)$, we obtain that $m \in (N)$.

Likewise, using the fact that the only units in $\ZZ[i]$ are the fourth roots on unity, we obtain the same statement in the case when $d=0$ (which can only happen when $N=1$). In other words, non-trivial cosets correspond to a choice of $c$ and $d$.

This means that, if we write $p=(z,t)$, we get
\begin{align*}
E_\infty(p,s) & = t^s+\frac{1}{4}\sum_{\substack{c \in (N),\; c\neq 0 ,\\ d\in 1 + (N),\; (c,d) = (1)}} \frac{t^s}{(\abs{cz+d}^2+\abs{c}^2t^2)^s} \\
& = t^s + \frac{1}{4}\sum_{c \in (N) - \{0\}} \frac{t^s}{\abs{c}^{2s}}\sum_{d\in 1 + (N),\; (c,d) = (1)} \frac{1}{(\abs{z+d/c}^2+t^2)^s},
\end{align*}
where the factor $\frac{1}{4}$ comes from the fact that we're considering matrices up to a unit in $\ZZ[i]$.

Next, we rewrite the inner sum using the right action of 
\[
\Gamma(N)_\infty' = \st{\left[\begin{array}{cc}
1 & m \\
0 & 1
\end{array}\right]}{m\in (N)}
\]
on $\Gamma(N)_\infty'\backslash\Gamma(N)$. We have
\[
\left[\begin{array}{cc}
\ast & \ast  \\
c & d
\end{array}\right]
\cdot 
\left[\begin{array}{cc}
1 & m \\
0 & 1
\end{array}\right]
=
\left[\begin{array}{cc}
\ast & \ast \\
c & mc + d
\end{array}\right]
\]
So by splitting the inner sum into cosets of $\Gamma(N)_\infty' \backslash \Gamma(N) / \Gamma(N)_\infty'$, we get
\[
E_\infty(p,s) = t^s + \frac{1}{4}\sum_{c \in (N) - \{0\}} \frac{t^s}{\abs{c}^{2s}}\sum_{\substack{d\in 1 + (N),\\ (c,d) = (1),\\ \mod c}} \sum_{m\in (N)}\frac{1}{(\abs{z+d/c+m}^2+t^2)^s}.
\]
We are after the zeroth Fourier coefficient, so we need to integrate this over a fundamental domain $\calD$ for $\Gamma(N)_\infty'$ acting on $\CC$. We get, writing $z=x+iy$ and $\abs{\calD}$ for the area of $\calD$,
\begin{align*}
\frac{1}{\abs{\calD}}\int_{\calD}E_\infty(z,t,s)\;\mathrm{d}x\;\mathrm{d}y & = t^s + \frac{1}{4}\sum_{c\in (N)-\{0\}} \frac{t^s}{\abs{c}^{2s}} \sum_{\substack{d\in 1 + (N),\\ (c,d) = (1),\\ \mod c}} \frac{1}{\abs{\calD}}\int_{\RR^2} \frac{1}{(x^2+y^2+t^2)^s}\;\mathrm{d}x\;\mathrm{d}y \\
& = t^s + \frac{1}{4}\sum_{c\in (N)-\{0\}} \frac{t^s}{\abs{c}^{2s}} \sum_{\substack{d\in 1 + (N),\\ (c,d) = (1),\\ \mod c}} \frac{1}{\abs{\calD}}\int_0^\infty \int_0^{2\pi} \frac{r}{(r^2+t^2)^s}\;\mathrm{d}\theta\;\mathrm{d}r  \\
& = t^s + \frac{\pi}{4 \abs{\calD} \cdot (s-1)} \sum_{c\in (N)-\{0\}} \sum_{\substack{d\in 1 + (N),\\ (c,d) = (1),\\ \mod c}} \frac{1}{\abs{c}^{2s}}
\end{align*}
Let us now first compute
\begin{align*}
 \sum_{c\in (N)-\{0\}} \sum_{\substack{d\in 1 + (N),\\ (c,d) = (1),\\ \mod c}} \frac{1}{\abs{c}^{2s}}
& =
  \frac{1}{N^{2s}}\sum_{c\in \ZZ[i]-\{0\}}  \frac{1}{\abs{c}^{2s}} \card{\st{d\in \Big(1 + (N)\Big)/ (c\cdot N) }{(c,d)=1}} \\
  & =
  \frac{1}{N^{2s}}\sum_{c\in \ZZ[i]-\{0\}}  \frac{1}{\abs{c}^{2s}}  \frac{\phi(cN)}{\phi(N)},
  \end{align*}
  where $\phi$ denotes the Euler totient function of $\ZZ[i]$. The reason for this last equality is that the set $\st{d\in \Big(1 + (N)\Big)/ (c\cdot N) }{(c,d)=1}$ is exactly the kernel of the (surjective) reduction map 
\[
\Big(\ZZ[i]/(cN)\Big)^* \longrightarrow \Big(\ZZ[i]/(N)\Big)^*
\]
between the two groups of invertible elements.

For $\ZZ[i]$, the Euler product formula reads
\[
\phi(m) = \abs{m}^2 \prod_{p\in\calP \text{ s.t. }p|m} \left(1-\frac{1}{\abs{p}^2}\right), 
\]
where $\calP$ denotes the set of primes of $\ZZ[i]$. So, if we write $N=\prod_{p\in \calP} p^{a_p}$ we get
\begin{align*}
\sum_{c\in (N)-\{0\}} \sum_{\substack{d\in 1 + (N),\\ (c,d) = (1),\\ \mod c}} \frac{1}{\abs{c}^{2s}} 
& = \frac{1}{N^{2s}\phi(N)}\prod_{\calP} \sum_{k=0}^\infty \frac{\phi(p^{k+a_p})}{\abs{p}^{k\cdot 2s}} \\
& = \frac{1}{N^{2s}\phi(N)}\prod_{p\in\calP \text{ s.t. } p \nmid N}\left(1 + \left(1-\frac{1}{\abs{p}^2}\right) \sum_{k=1}^\infty \frac{\abs{p}^{2k}}{\abs{p}^{k\cdot 2s}}\right)  \\
& \quad \cdot \prod_{p\in\calP \text{ s.t. } p|N}  \left(1-\frac{1}{\abs{p}^2}\right) \sum_{k=0}^\infty \frac{\abs{p}^{2k+2a_p}}{\abs{p}^{k\cdot 2s}}  \\
& = \frac{1}{N^{2s}\phi(N)} \prod_{p\in\calP \text{ s.t. } p\nmid N} \left(1 + \left(1-\frac{1}{\abs{p}^2}\right) \cdot \frac{\abs{p}^{2-2s}}{1-\abs{p}^{2-2s}} \right) \\
& \quad \cdot \prod_{p\in\calP \text{ s.t. }p|N} \left(1-\frac{1}{\abs{p}^2}\right)\cdot \frac{\abs{p}^{2 a_p}}{1-\abs{p}^{2-2s}} \\
& = \frac{1}{N^{2s}} \prod_{p\in\calP \text{ s.t. } p\nmid N} \frac{1-\abs{p}^{-2s}}{1-\abs{p}^{2-2s}} \cdot \prod_{p\in\calP \text{ s.t.} p|N} \frac{1}{1-\abs{p}^ {2-2s}} \\
& =  \frac{\zeta_{\QQ(i)}(s-1)}{\zeta_{\QQ(i)}(s)} \cdot \frac{1}{N^{2s}} \cdot \prod_{p|N} \frac{1}{1-\abs{p}^{-2s}}
\end{align*}
Finally, we observe that $\abs{\calD}=N^2$. So, combining all of the above yields the lemma.
\end{proof}

Now we're ready to prove that $\Gamma(N)$ has no residual spectrum.

\begin{proof}[Proof of Proposition \ref{prp_residual_spectrum}] It's well known (see for instance \cite[Proposition 10.5.5]{Cohen}) that 
\[
\zeta_{\QQ(i)}(s) = \zeta(s) \cdot L(s,\chi_{-4}),
\]
where $\zeta$ denotes the Riemann zeta function and  $L(\cdot,\chi_{-4})$ is the Dirichlet $L$-function for the Legendre--Kronecker character $\chi_{-4}(n) = \left(\frac{-4}{n}\right)$. The latter is an entire function (see for instance \cite[Corollary 10.2.3(3)]{Cohen}). As such, the only pole of $\zeta_{\QQ(i)}(s-1)/\zeta_{\QQ(i)}(s)$ in the half plane $\mathrm{Re}(s)>1$ is the simple pole at $s=2$ coming from the Riemann zeta function.
\end{proof}

\subsection{Bass notes of various polyopes}

We start with a proposition on the spectrum of three polytopes in $\HH^3$ of finite volume. Recall that $\calC$ denotes the fundamental corner of the ideal right-angled regular octahedron $O$. Moreover, if $P\subset\HH^3$ is a polytope, we will write $\sigma^{\mathrm{N}}(P)$ (and $\sigma^{\mathrm{D}}(P)$ respectively) for the spectrum of the Laplacian on $L^2$-functions on $P$, subject to Neumann conditions (and Dirichlet conditions respectively) on all faces of $P$. We need the following facts:

\begin{prp}\label{prp_spectrum_building_blocks}
\begin{itemize}
\item[(a)] We have:
\[
\sigma^{\mathrm{D}}(O) \subset \sigma^{\mathrm{N}}(O) = \{0\} \cup [1,\infty).
\]
\item[(b)] Let $P_1\subset \HH^3$ denote a polytope that is isometric to the set of copies of $\calC$ that are incident to a single face of $O$. Then
\[
\sigma^{\mathrm{D}}(P_1) \subset \sigma^{\mathrm{N}}(P_1) = \{0\} \cup [1,\infty).
\]
\item[(c)] Let $P_2\subset \HH^3$ denote a polytope that is obtained by taking the union of $P_1$ with the reflection of $P_1$ along the face of $O$ that we used to define it. Then
\[
\sigma^{\mathrm{D}}(P_2) \subset \sigma^{\mathrm{N}}(P_2) = \{0\} \cup [1,\infty).
\]
\end{itemize}
Moreover, for all of the polytopes above, the smallest embedded Neumann eigenvalue is at least $\pi^2/6$.
\end{prp}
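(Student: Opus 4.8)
The plan is to exploit the arithmetic structure explained in the preliminaries: the fundamental corner $\calC$ is a fundamental domain for $\PSL(2,\ZZ[i])\rtimes\langle\rho\rangle$ acting on $\HH^3$, and hence $O$, $P_1$ and $P_2$ are all finite unions of copies of $\calC$, each obtained by quotienting $\HH^3$ by a finite-index subgroup of $\PSL(2,\ZZ[i])\rtimes\langle\rho\rangle$ together with a choice of Neumann or Dirichlet boundary condition on the bounding mirrors. Concretely, I would first identify each polytope-with-mirrors as an orbifold $\Gamma\backslash\HH^3$ for a suitable reflection group $\Gamma<\Isom(\HH^3)$: for $O$ it is $\Gamma_{\mathrm{SA}}$, for $P_1$ it is the reflection group in the faces of $P_1$, and for $P_2$ likewise; each such $\Gamma$ contains a finite-index copy of a congruence group, in fact $\Gamma(2)<\PGL(2,\ZZ[i])$ up to finite index, via the identification $\Gamma_{\mathrm{SA}}=\Gamma(2)\rtimes\langle\rho\rangle$. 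The Neumann spectrum on the polytope is then a piece of the $L^2$-spectrum of a commensurable congruence orbifold, and the Dirichlet spectrum sits inside the Neumann spectrum by the standard min-max/domain-monotonicity comparison (a Dirichlet eigenfunction extends by its reflection-antisymmetrization to a test function, or more simply $\sigma^{\mathrm D}$ is dominated by $\sigma^{\mathrm N}$ on the doubled space).

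The heart of the matter is the equality $\sigma^{\mathrm N}(\cdot)=\{0\}\cup[1,\infty)$, i.e.\ the absence of any eigenvalue (or residual/continuous spectrum) strictly between $0$ and $1$. Here I would split into the two kinds of spectrum. For the discrete part: any Neumann eigenfunction on the polytope lifts to a $\Gamma$-invariant $L^2$-eigenfunction on $\HH^3$, hence descends to a cusp form (or residual form) on a congruence quotient of $\PGL(2,\ZZ[i])$. Residual forms below $1$ are ruled out by Proposition \ref{prp_residual_spectrum}, which says $\Gamma(N)$ has no residual spectrum except constants. Cusp forms below $1$ on $\Gamma(2)$ (and on the relevant reflection groups, which are commensurable with it) are ruled out by the result the authors announce as Proposition \ref{prp_eigenvalue_gamma_2}: $\Gamma(2)$ has no cuspidal eigenvalues below the first Dirichlet eigenvalue of $O$, which exceeds $1$. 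For the continuous part: Eisenstein series contribute only $[1,\infty)$ to the spectrum, since the continuous spectrum of a rank-two cusp in dimension three starts at $(\tfrac{d-1}{2})^2=1$; and the relevant polytopes are geometrically finite with only rank-two cusps (they tile $O$, which has genuine ideal vertices), so no lower continuous spectrum appears. This shows $\sigma^{\mathrm N}\subset\{0\}\cup[1,\infty)$, and the reverse inclusion $[1,\infty)\subset\sigma^{\mathrm N}$ is automatic from the continuous spectrum of the cusps. The main obstacle is organizing the commensurability bookkeeping for $P_1$ and $P_2$ — one must check that their reflection groups really are finite extensions of congruence subgroups of $\PGL(2,\ZZ[i])$ to which Propositions \ref{prp_residual_spectrum} and \ref{prp_eigenvalue_gamma_2} apply — and in verifying that $P_1,P_2$ have no rank-one cusps or other spectral pathologies.

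For the final clause, that the smallest \emph{embedded} Neumann eigenvalue is at least $\pi^2/6$, I would argue as follows. "Embedded" means an $L^2$-eigenvalue sitting inside the continuous spectrum $[1,\infty)$, i.e.\ a genuine cusp form. On each of these polytopes, a Neumann cusp form restricts to a function on each cusp neighborhood that is orthogonal to constants in the cross-sectional torus, hence is supported on non-zero Fourier modes. Truncating at the cusp and integrating by parts (the Maa\ss--Selberg mechanism used in the lemma above, or directly separation of variables in the cusp), the Rayleigh quotient of such a function is bounded below by the first non-zero eigenvalue of the flat cross-section plus the vertical contribution; since the cusps here come from $\PSL(2,\ZZ[i])$-cusps with cross-section a square torus of the appropriate normalization, the relevant frequency gives the bound $\pi^2/6$. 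Equivalently, and more robustly, one can note that a Neumann cusp form on the polytope, being a cusp form on a congruence quotient of $\PGL(2,\ZZ[i])$, is bounded below by the smallest cuspidal eigenvalue there, and the argument of Proposition \ref{prp_eigenvalue_gamma_2} in fact yields a bound of at least $\pi^2/6$ (the Dirichlet eigenvalue of $O$ being at least $\pi^2/6$). I expect the bookkeeping for $P_1,P_2$ to again be the fiddly part, but the numerical constant $\pi^2/6$ should drop out of the cusp cross-section as soon as the congruence identification is in place.
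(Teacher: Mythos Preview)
Your proposal has a circularity problem. You plan to rule out cuspidal eigenvalues below $1$ by invoking Proposition~\ref{prp_eigenvalue_gamma_2} (``$\Gamma(2)$ has no cuspidal eigenvalues below $\lambda_1^{\mathrm D}(O)>1$''). But in the paper the logical dependence runs the other way: Proposition~\ref{prp_eigenvalue_gamma_2} is deduced \emph{from} Proposition~\ref{prp_spectrum_building_blocks}, not vice versa. Its proof reads ``In the proof of Proposition~\ref{prp_spectrum_building_blocks}, we proved that neither contains eigenvalues $\leq 1$.'' So your argument, as written, assumes what it is trying to prove. You need an independent mechanism to exclude cusp forms on $O$ with eigenvalue in $(0,1)$; the residual part (Proposition~\ref{prp_residual_spectrum}) you cite correctly, but for the cuspidal part you must actually do something.

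What the paper does instead is a direct Roelcke-type estimate on $O$. One covers $O$ by six horoball regions $g_iB$, one per ideal vertex, arranged so that every point of $O$ (except the center) lies in at most three of them. A cusp form $f$ on $O$ has vanishing zeroth Fourier coefficient in each cusp, so in each $B$ one expands $f\circ g_i^{-1}$ in cosines $\cos(\pi m x)\cos(\pi n y)$ with $(m,n)\neq(0,0)$; Parseval then gives
\[
\int_B|\nabla(f\circ g_i^{-1})|^2 \;\gg\; \pi^2 \int_B \tfrac{1}{t}|f\circ g_i^{-1}|^2 \;\geq\; \tfrac{\pi^2}{2}\int_B |f\circ g_i^{-1}|^2,
\]
and summing over $i$ with the multiplicity-$3$ cover yields $\lambda>\pi^2/6$. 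This single computation simultaneously gives the spectral gap in $(0,1)$ and the embedded-eigenvalue bound $\pi^2/6$; your sketch of the latter gestures at the right Fourier mechanism but again routes it through Proposition~\ref{prp_eigenvalue_gamma_2}, which is not available.

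For parts (b) and (c) your ``commensurability bookkeeping'' is unnecessary: $P_1$ and $P_2$, viewed as mirror orbifolds, are quotients of $O$ by finite symmetry groups, so their Neumann spectra embed in $\sigma^{\mathrm N}(O)$ and the result for $O$ immediately transfers. This is both shorter and avoids having to re-identify the reflection groups of $P_1,P_2$ with congruence groups.
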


\begin{proof}
We start with item (a). First of all we note that, since $\Gamma_{\mathrm{SA}}$ is generated by the reflections in the faces of $O$, the spectrum $\sigma^{\mathrm{N}}(O)$ is exactly the spectrum on functions that are invariant under $\Gamma_{\mathrm{SA}}$. Because $\Gamma_{\mathrm{SA}}$ is a non-uniform lattice, and because $O$ is a subset of $\HH^3$ respectively, we obtain that
\[
 \{0\} \cup [1,\infty) \subset \sigma^{\mathrm{N}}(O) \quad \text{and} \quad \sigma^{\mathrm{D}}(O)\subset [1,\infty),
\]
respectively. So, we just need the reverse to the first inclusion. 

Now $\Gamma_{\mathrm{SA}}$ contains the congruence group $\Gamma(2)<\PGL(2,\ZZ[i])$ as a subgroup of index $2$ (see \cite[p. 2301]{CFH}), which in particular implies $\Gamma_{\mathrm{SA}}$ has no residual spectrum, by Proposition \ref{prp_residual_spectrum} and the fact that residual spectrum of $\Gamma_{\mathrm{SA}}$ would lift to residual spectrum of $\Gamma(2)$. That is, the spectrum in the interval $(0,1)$, if any exists, comes from cusp forms.

In order to prove a lower bound on the spectrum coming from cusp forms, we will use an idea originally due to Roelcke \cite{Roelcke} (see also \cite{ElstrodtGrunewaldMennicke_someremarks,Huxley}).  Let $f:O\to\RR$ be a cusp form such that $\norm{f}_2^2=1$ and $\norm{\nabla f}^2 = \lambda$. In other words
\[
\lambda = \int_O \frac{1}{t} \left( \abs{\frac{\partial f}{\partial x}}^2 + \abs{\frac{\partial f}{\partial y}}^2 + \abs{\frac{\partial f}{\partial t}}^2\right)\;\mathrm{d}x\;\mathrm{d}y\;\mathrm{d}t.
\]
Now set
\[
B = \st{(x,y,t)\in \HH^3}{0\leq x,y \leq 1,\; t\geq \frac{1}{\sqrt{2}}}.
\]
Recall that $G$ denotes the group of isometries of $O$. We observe that $6$ elements of $G$ suffice to cover $O$ with translates of $B$ (one for every cusp of $O$). That is, there exist $g_1,\ldots,g_6\in G$ such that
\[
O = \bigcup_{i=1}^6 g_i B.
\]
Moreover, we claim that every point of $O$ except the point $c=\left(\frac{1}{2},\frac{1}{2},\frac{1}{\sqrt{2}}\right)$ is covered by at most $3$ of these translates of $B$. To see this, we note that $B$ is the intersection of a horoball with $O$. As such, all the $g_i B$ are intersections of horoballs with $O$. Because the point $c$ is a global fixed point of $G$, all the translates $g_i B$ pass through this point as well. So every $g_i B$ passes through one of the cusps and the point $c$. This already determines the translate incident to the cusp at $(i+1)/2$, which intersects $B$ only at the point $c$. For the other translates we need one more datum to determine them. We use the fact that $B$ intersects the boundary of $O$ orthogonally and hence so do its translates. These translates are thus euclidean balls that intersect the square 
\[
\st{(x,y,z)\in B}{ t = \frac{1}{\sqrt{2}}}
\]
in a quarter disk of radius $\frac{1}{\sqrt{2}}$. Figure \ref{pic_horoballs} shows the horoballs that intersect $B$ (in $B$ on the left and in the square on the right).
\begin{figure}
\begin{center}
\includegraphics[scale=1]{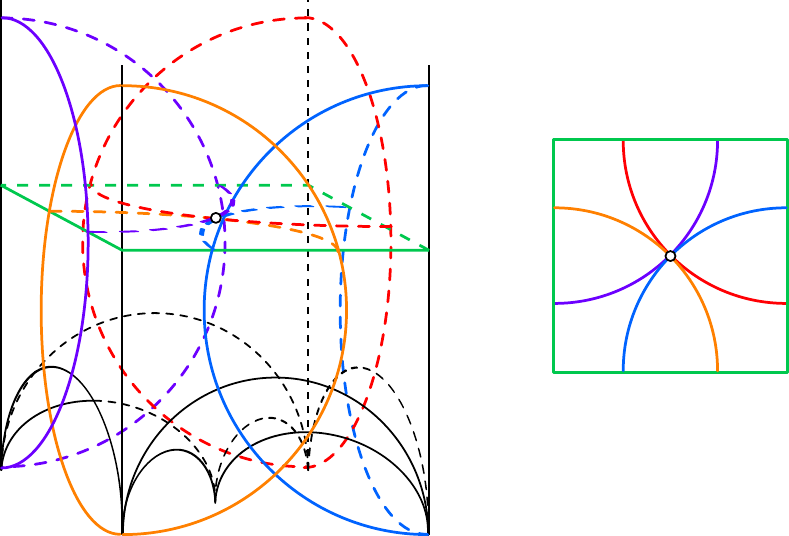}
\caption{The horoballs intersecting $B$.}\label{pic_horoballs}
\end{center}
\end{figure}
In particular, the number of translates that contain a given point of $O-\{c\}$ is at most $3$.

So we get that 
\[
3 \cdot \lambda > \sum_{i=1}^6 \int_B \frac{1}{t}\left( \abs{\frac{\partial f\circ g_i^{-1}}{\partial x}}^2 +  \abs{\frac{\partial f\circ g_i^{-1}}{\partial y}}^2 +
 \abs{\frac{\partial f\circ g_i^{-1}}{\partial t}}^2 \right) \; \mathrm{d}x\;\mathrm{d}y\;\mathrm{d}t.
\]
Now we will estimate this sum term by term. The function $f$ is invariant under the parabolic subgroups corresponding to the cusps. That is, $f\circ g_i^{-1}$ is invariant under $z\mapsto z+2$, $z\mapsto z+2i$ and $x+iy \mapsto \pm x \pm iy$. Moreover, $f$ is a cusp form, so it admits a Fourier series:
\[
f\circ g_i^{-1}(x,y,t) = \sum_{(m,n)\in\NN^2-\{(0,0)\}} a_{m,n}^{(i)}(t) \cos(\pi mx) \cos(\pi ny).
\]
In order to integrate over a full period of our cosine functions, we write
\[
C = B\cup (B+1) \cup (B+i)\cup (B+1+i)
\]
and obtain:
\begin{multline*}
4\cdot \int_B \frac{1}{t}\left( \abs{\frac{\partial f\circ g_i^{-1}}{\partial x}}^2 +  \abs{\frac{\partial f\circ g_i^{-1}}{\partial y}}^2 +
 \abs{\frac{\partial f\circ g_i^{-1}}{\partial t}}^2 \right) \; \mathrm{d}x\;\mathrm{d}y\;\mathrm{d}t \\
 \geq  \int_C \frac{1}{t}   \abs{\sum_{(m,n)\in\NN^2-\{(0,0)\}} -\pi m\cdot a_{m,n}^{(i)}(t) \sin(\pi m x)\cos(\pi n y)}^2  \mathrm{d}x\;\mathrm{d}y\;\mathrm{d}t \\
 +  \int_C \frac{1}{t}   \abs{\sum_{(m,n)\in\NN^2-\{(0,0)\}} -\pi n\cdot a_{m,n}^{(i)}(t) \cos(\pi m x)\sin(\pi n y)}^2  \mathrm{d}x\;\mathrm{d}y\;\mathrm{d}t.
\end{multline*}
Applying Parseval's identity yields
\begin{multline*}
4\cdot \int_B \frac{1}{t}\left( \abs{\frac{\partial f\circ g_i^{-1}}{\partial x}}^2 +  \abs{\frac{\partial f\circ g_i^{-1}}{\partial y}}^2 +
 \abs{\frac{\partial f\circ g_i^{-1}}{\partial t}}^2 \right) \; \mathrm{d}x\;\mathrm{d}y\;\mathrm{d}t \\
   \geq 4  \sum_{(m,n)\in\NN^2-\{(0,0)\}} \pi^2 (m^2+n^2)\int_{1/\sqrt{2}}^\infty \frac{1}{t} \abs{a_{m,n}^{(i)}(t)}^2 \;\mathrm{d}t  \\
  > 4\pi^2  \sum_{(m,n)\in\NN^2-\{(0,0)\}} \int_{1/\sqrt{2}}^\infty \frac{1}{t} \abs{a_{m,n}^{(i)}(t)}^2 \;\mathrm{d}t \\
  > 2\pi^2 \sum_{(m,n)\in\NN^2-\{(0,0)\}} \int_{1/\sqrt{2}}^\infty \frac{1}{t^3} \abs{a_{m,n}^{(i)}(t)}^2 \;\mathrm{d}t  \\
= \frac{\pi^2}{2} \int_C \frac{1}{t^3} \abs{f\circ g_i^{-1}}^2 \; \mathrm{d}x\;\mathrm{d}y\;\mathrm{d}t 
 = 2\pi^2 \int_B \frac{1}{t^3} \abs{f\circ g_i^{-1}}^2 \; \mathrm{d}x\;\mathrm{d}y\;\mathrm{d}t ,
\end{multline*}
where we've applied Parseval again in the penultimate step and the reflection symmetry of $f$ in the last. All in all, we obtain

\begin{align*}
12\cdot \lambda  & > 2\pi^2 \sum_{i=1}^6 \int_B \frac{1}{t^3} \abs{f\circ g_i^{-1}}^2 \mathrm{d}x\;\mathrm{d}y\;\mathrm{d}t \\
 & > 2\pi^2 \cdot \norm{f}_2^2
\end{align*}
and hence $\lambda > \pi^2/6$.

For items (b) and (c) we observe that both $P_1$ and $P_2$, thought of as orbifolds can be obtained as the quotient orbifold of $O$ by a finite group. This means that their spectrum lifts to $O$. 
\end{proof}

We will also derive some crude estimates on the number $\lambda_0(T)$. It would be very interesting to have more precise numbers than what we obtain below.

Before we prove our bounds, we recall the definition of the \textbf{critical exponent} of a countable group $\Gamma$ acting properly on a space $X$.
\begin{equation}\label{eq_def_crit_exp}
\delta(\Gamma,X) = \inf\st{s\in(0,\infty)}{ \sum_{\gamma \in \Gamma} e^{-s\cdot \dist_X(x_0,\gamma \cdot x_0)} \text{ converges}}.
\end{equation}
If $X=\HH^3$ then we will often drop it from the notation and just write $\delta(\Gamma)=\delta(\Gamma,\HH^3)$. As the notation suggests, this number does not depend on the choice of base point $x_0$.  It's moreover related to the bass note of $\Gamma\backslash\HH^3$ through the Elstrodt--Sullivan formula\footnote{We note that Sullivan states this formula for torsion-free groups. All our groups are virtually torsion free -- even the non-finitely generated groups we consider -- so the theory still applies.} \cite[Theorem 2.17]{Sullivan}:
\begin{equation}\label{eq_ElstrodtSullivan}
\lambda_0(\Gamma) = \left\{\begin{array}{ll}
1 & \text{if } \delta(\Gamma) \leq 1 \\
\delta(\Gamma)\cdot (2-\delta(\Gamma)) & \text{if } \delta(\Gamma) > 1.
\end{array}\right.
\end{equation}

Combining this with recent work by Coulon \cite{Coulon}, we will derive an upper bound on $\lambda_0(T)$. Our lower bound will come from the Brooks--Burger transfer principle \cite{Brooks_transfer,Burger}.

\begin{prprep}{\ref*{prp_main}}
We have
\[
\frac{3-\sqrt{5+2\sqrt{3}}}{67-\sqrt{5+2\sqrt{3}}} \leq \lambda_0(T) \leq  \delta(\Gamma_{\mathrm{Ap}})  \cdot (1- \delta(\Gamma_{\mathrm{Ap}})  /4)
\]
\end{prprep}

\begin{proof}
To be on the safe side, we note that $\Gamma_\infty$ is indeed virtually torsion free. It's abstractly isomorphic to a free product of countably many copies of $\ZZ/2\ZZ$:
\[
\Gamma_\infty \simeq \langle t_1,t_2,\ldots | t_n^2 = 1\; \forall n\in \NN\rangle.
\]
This group admits an index two torsion-free subgroup: the kernel of the map $\Gamma_\infty\to\ZZ/2\ZZ$ sending each generator $t_i$ to the generator of $\ZZ/2\ZZ$. Geometrically this index two subgroup is the fundamental group of the double of the octa-tree along its boundary. Since neither the critical exponent, nor $\lambda_0$ change when passing to a finite index subgroup, \eqref{eq_ElstrodtSullivan} applies.

Now for the upper bound we observe that $\Gamma_\infty$ contains a copy of the Apollonian group, which implies that its critical exponent $\delta(\Gamma_\infty)$ satisfies 
\[
\delta(\Gamma_\infty) \geq \delta(\Gamma_{\mathrm{Ap}}).
\]
We have just noted that the latter is larger than $1$. So \eqref{eq_ElstrodtSullivan} yields an upper bound on $\lambda_0(T)$ (of $0.906555\ldots$). We can however do slightly better using \cite{Coulon}. Coulon proved that (when specialized to our setting)
\[
\delta\Big(\Gamma_\infty, \HH^3\Big) \geq \delta\Big(\Gamma_{\mathrm{SA}}, \HH^3\Big) - \frac{1}{2} \delta\Big( \Gamma_{\mathrm{SA}} / \Gamma_\infty, \Gamma_\infty\backslash\HH^3\Big).
\]
The orbifold $\Gamma_\infty\backslash\HH^3$ can be identified with the octa-tree $T$ (with the boundary interpreted as mirrors). This means that $\delta\Big( \Gamma_{\mathrm{SA}} / \Gamma_\infty, \Gamma_\infty\backslash\HH^3\Big) = \delta(\Gamma_{\mathrm{Ap}})$. Moreover, $\Gamma_{\mathrm{SA}}$ is a lattice in $\Isom(\HH^3)$, so $\delta(\Gamma_{\mathrm{SA}})=2$. Filling this in gives the bound we claimed.

For the lower bound, we use the effective version of the Brooks--Burger transfer method (see \cite{Brooks_transfer,Burger}) as written up by Breuillard \cite{Breuillard} (see \cite{Tapie} for an alternative version in dimension two). Technically Breuillard only works with finite degree covers, but the argument goes through in our case. The only difference is that we cannot assume that there is an eigenfunction for $\lambda_0(T)$, which means that in the argument, the eigenfunction needs to be replaced by (for instance) a $C^1$ function $f:T\to\RR$ of compact support such that
\[
\frac{\norm{\nabla f}_2^2}{\norm{f}_2^2} \leq \lambda_0(T) + \eps
\]
for some $\eps>0$ that we will send to $0$ a posteriori (as Tapie does in his paper). The result in our case is that if $\Gamma<\mathrm{Isom}(\HH^3)$ is a lattice and $\Lambda<\Gamma$ is a subgroup such that $\Lambda \backslash\HH^3 = T$, then
\[
\lambda_0(T) \geq \frac{\mu\cdot \lambda_1(\calG)}{16\cdot d_{\calG}+\mu\cdot\lambda_1(\calG)},
\]
where $\calG$ denotes the Schreier graph of $\Gamma$ acting on $\Gamma/\Lambda$, $d_{\calG}$ denotes the degree of this graph and 
\[
\mu = \min_{(v,w) \text{ edges of }\calG}\{\lambda_1^{\mathrm{N}}(\calF(v)),\lambda_1^{\mathrm{N}}(\calF(v)\cup\calF(w)\},
\]
where $\lambda_1^{\mathrm{N}}$ denotes the bottom of the Laplace spectrum with Neumann boundary conditions. Moreover, we think of $\calG$ as embedded in $T$, dual to a decomposition of $T$ into isometric copies of a fundamental domains for $\Gamma \backslash\HH^3$ and hence we can talk about the fundamental domain $\calF(v)$ associated to a vertex $v$ of $\calG$. Note that the fact that $\calF(v_1)$ and $\calF(v_2)$ are isometric for any vertices $v_1$ and $v_2$ implies that the minimum above is the minimum of a finite number of strictly positive bass notes.

We will use the polytope $P_1$ from Proposition \ref{prp_spectrum_building_blocks}. That is, $\Gamma$ will denote the group generated by the reflections in the faces of $P_1$. This group contains $\Gamma_{\mathrm{SA}}$ as an index $4$ subgroup. As such it also contains $\Gamma_\infty$. Because the dual graph to the decomposition of $O$ into copies of $P_1$ is a complete graph on $4$ vertices, the Schreier graph $\calG$ of $\Gamma$ acting on $\Gamma_\infty$ is the $4$-regular graph obtained from the $4$-regular tree by replacing every vertex with a complete graph $K_4$ on $4$ vertices in such a way that every vertex of every copy of $K_4$ connects to exactly one edge coming from the original tree (Figure \ref{pic_dualgraph} shows a local picture). In graph theory, this is called a replacement product. 
\begin{figure}
\begin{center}
\includegraphics[scale=1]{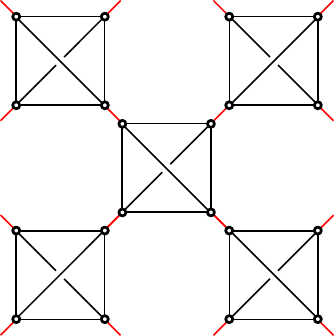}
\caption{The local structure of the dual graph $\calG$ to the tiling of $T$ by copies of $P_1$.}\label{pic_dualgraph}
\end{center}
\end{figure}

We can also think of $\calG$ as a Cayley graph for $(\ZZ/4\ZZ)\ast (\ZZ/2\ZZ)$ with respect to the generating set that contains all the non-trivial elements of the $(\ZZ/4\ZZ)$ factor and the non-trivial element of the $(\ZZ/2\ZZ)$ factor. As such, it follows from \cite[Theorem 1]{CartwrightSoardi} that
\[
\lambda_1(\calG) = 3-\sqrt{5+2\sqrt{3}}
\]

So, using Proposition \ref{prp_spectrum_building_blocks}, and the fact that the polytopes $\calF(v)\cup\calF(w)$ that appear in the definition of $\mu$ are copies of $P_1$ and $P_2$, we get:
\[
\lambda_0(T) \geq \frac{3-\sqrt{5+2\sqrt{3}}}{67-\sqrt{5+2\sqrt{3}}}.
\]
\end{proof}

\subsection{The bass note of the level $2$ congruence subgroup}

Using the results from the previous section, we also obtain that the spectrum of $\Gamma(2)$ starts at $1$:
\begin{prp}\label{prp_eigenvalue_gamma_2}
We have
\[
\lambda_1(\Gamma(2)\backslash \HH^3)\geq \lambda_1^{\mathrm{D}}(O) > 1.
\]
\end{prp}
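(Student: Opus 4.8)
The plan is to relate the spectrum of $\Gamma(2)\backslash\HH^3$ to that of the octahedron $O$ with Dirichlet conditions, exploiting the fact that $\Gamma(2)$ sits inside the super Apollonian group $\Gamma_{\mathrm{SA}}=\Gamma(2)\rtimes\langle\rho\rangle$ with index $2$. First I would recall from Section \ref{sec_SA} that $\Gamma_{\mathrm{SA}}$ is the reflection group of $O$, so $\Gamma(2)$ is the orientation-preserving index-two subgroup, i.e. $\Gamma(2) = \ker(\mathrm{or}\colon\Gamma_{\mathrm{SA}}\to\ZZ/2\ZZ)$. Geometrically, $\Gamma(2)\backslash\HH^3$ is the orientation double of the orbifold-with-mirrors $\Gamma_{\mathrm{SA}}\backslash\HH^3 = O$; equivalently it is obtained by doubling $O$ across all of its faces. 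Consequently $L^2(\Gamma(2)\backslash\HH^3)$ decomposes under the deck involution $\rho$ into the $+1$-eigenspace, which is $L^2(O)$ with Neumann conditions on all faces, and the $-1$-eigenspace, which is $L^2(O)$ with Dirichlet conditions on all faces. Hence
\[
\sigma\big(\Delta_{\Gamma(2)\backslash\HH^3}\big) = \sigma^{\mathrm{N}}(O)\ \cup\ \sigma^{\mathrm{D}}(O).
\]
By Proposition \ref{prp_spectrum_building_blocks}(a) we have $\sigma^{\mathrm{N}}(O) = \{0\}\cup[1,\infty)$ and $\sigma^{\mathrm{D}}(O)\subset[1,\infty)$, so the full spectrum of $\Gamma(2)\backslash\HH^3$ below $1$ consists only of the eigenvalue $0$. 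In particular $\lambda_1(\Gamma(2)\backslash\HH^3) = \min\big(\sigma^{\mathrm{D}}(O)\cap(0,\infty)\big) \geq \lambda_1^{\mathrm D}(O)$, and it remains to show $\lambda_1^{\mathrm D}(O)>1$, i.e. that there is \emph{strict} inequality.

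For the strict inequality, I would argue that $0$ is not in the Dirichlet spectrum (a Dirichlet eigenfunction cannot be constant, and more precisely the bottom of the Dirichlet spectrum of the infinite-volume complement is positive) and that the value $1$ — the bottom of the continuous spectrum — is not attained as a Dirichlet eigenvalue either. Concretely: any $L^2$ solution of $\Delta f = f$ on $O$ with Dirichlet boundary conditions would, via the reflections generating $\Gamma_{\mathrm{SA}}$, extend to an $L^2$ eigenfunction on $\Gamma(2)\backslash\HH^3$ with eigenvalue $1$; but $1$ sits at the base of the continuous spectrum of a finite-volume cusped $3$-orbifold and there are no embedded $L^2$ eigenfunctions there (the would-be eigenfunction must be cuspidal since its eigenvalue lies below any residual spectrum other than $0$ by Proposition \ref{prp_residual_spectrum}, and a cuspidal eigenfunction at eigenvalue $1$ would violate the lower bound on cuspidal eigenvalues — indeed Proposition \ref{prp_spectrum_building_blocks} already gives that the smallest embedded Neumann eigenvalue, and hence any cuspidal eigenvalue, is at least $\pi^2/6>1$, and the same Roelcke-type argument applied to the Dirichlet problem gives $\lambda_1^{\mathrm D}(O)\geq\pi^2/6$). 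This forces $\lambda_1^{\mathrm D}(O)\geq\pi^2/6 > 1$, which is the desired strict inequality.

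The main obstacle is making the last point fully rigorous: one must be careful that $\sigma^{\mathrm D}(O)$ could a priori contain $1$ as the bottom of a continuous part, not just as a discrete eigenvalue, so the statement $\lambda_1^{\mathrm D}(O)>1$ requires ruling out discrete spectrum in $(0,1]$ for the Dirichlet problem. The cleanest route is to observe that the Roelcke/Fourier-expansion argument in the proof of Proposition \ref{prp_spectrum_building_blocks}(a) works verbatim for cusp forms satisfying Dirichlet rather than Neumann conditions — the Fourier expansion in each cusp uses $\sin(\pi m x)\sin(\pi n y)$ with $m,n\geq 1$ instead of cosines, and the same inequality $12\lambda > 2\pi^2\|f\|_2^2$ goes through (in fact more easily, since the constant term vanishes automatically) — yielding $\lambda > \pi^2/6$ for any Dirichlet cusp form on $O$. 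Since $O$ has no residual spectrum below $1$ apart from the constants (which are excluded by Dirichlet conditions) and no continuous spectrum below $1$, this shows $\sigma^{\mathrm D}(O)\cap(0,1]=\varnothing$ and therefore $\lambda_1^{\mathrm D}(O)\geq\pi^2/6>1$, completing the proof.
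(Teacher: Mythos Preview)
Your proposal is correct and follows essentially the same route as the paper: identify $\Gamma(2)$ as the orientation kernel in $\Gamma_{\mathrm{SA}}$, split $L^2(\Gamma(2)\backslash\HH^3)$ into even and odd parts under the deck involution, match these with the Neumann and Dirichlet problems on $O$, and then invoke the content of Proposition~\ref{prp_spectrum_building_blocks}. The paper's proof is terser --- it just says ``in the proof of Proposition~\ref{prp_spectrum_building_blocks}, we proved that neither contains eigenvalues $\leq 1$'' --- whereas you spell out why the Roelcke/Fourier argument transfers to the Dirichlet problem (sines replacing cosines, the constant term automatically absent), which is exactly the missing justification behind that sentence.

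One small slip: the line $\lambda_1(\Gamma(2)\backslash\HH^3) = \min\big(\sigma^{\mathrm D}(O)\cap(0,\infty)\big)$ is not quite right --- the smallest nonzero discrete eigenvalue of $\Gamma(2)\backslash\HH^3$ could a priori be an \emph{even} (Neumann) cusp form rather than an odd one. What you actually get is $\lambda_1(\Gamma(2)\backslash\HH^3) \geq \min\{\lambda_1^{\mathrm D}(O),\ \text{smallest nonzero Neumann cusp eigenvalue}\}$, and since the Roelcke bound $>\pi^2/6$ applies to both pieces, the conclusion $\lambda_1(\Gamma(2)\backslash\HH^3)>1$ survives unchanged. (The paper's own chain of inequalities has the same looseness.)
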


\begin{proof}
As we mentioned in Section \ref{sec_SA}, $\Gamma_{\mathrm{SA}}=\Gamma(2)\rtimes \langle \rho\rangle$. In particular, $\Gamma(2)$ coincides with index $2$ the subgroup of orientation preserving elements of $\Gamma_{\mathrm{SA}}$. As such, the spectrum of the Laplacian on $\Gamma(2)\backslash\HH^3$ on even (respectively odd) functions with respect to the Deck group action of $\ZZ/2\ZZ$ coincides can be thought of as the spectrum of the Laplacian on functions on $O$ with Neumann (respectively Dirichlet) boundary conditions. In the proof of Proposition \ref{prp_spectrum_building_blocks}, we proved that neither contains eigenvalues $\leq 1$.
\end{proof}

\section{Near optimal spectral gaps}\label{sec_near_optimal}

Given the results from the previous section, we can now combine results on strong convergence of permutation representations due to Bordenave--Collins \cite{BordenaveCollins} with techniques due to Hide--Magee, Moy and Ballman--Mondal--Polymerakis \cite{HideMagee,Moy,BallmanMondalPolymerakis} to prove that both our models $S_n$ and $M_n$ of random covers have near optimal spectral gaps. In this section we will explain how this works. This will yield Theorems \ref{thm_main1} and \ref{thm_main2}.

For any geometrically finite hyperbolic $d$-orbifold $M$, the spectrum of the Laplacian $\Delta_{M}$ below $\left(\frac{d-1}{2}\right)^2$ is discrete, consisting of finitely many eigenvalues of finite multiplicity \cite{LaxPhillips}.  We say an eigenvalue in a cover of $M$ is $\textbf{new}$ if it does not arise by lifting from $M$. For a complete hyperbolic manifold $X$ we write $\lambda_0(X)$ to denote the bottom of the $L^2$-spectrum of the Laplacian on $X$.

Moreover, we will write
\[
\ell_0^2(\{1,\ldots,n\}) = \st{v\in\ell^2(\{1,\ldots,n\})}{\sum_{i=1}^n v_i=0}.
\]
The symmetric group $\sym_n$ acts on this vector space by permuting the coordinates. This representation $\mathrm{std}_n:\sym_n \to U(\ell_0^2(\{1,\ldots,n\}))$ is irreducible and is called the \textbf{standard representation} of $\sym_n$. Here and below $U(\calH)$ denotes the group of unitary operators on a Hilbert space $\calH$. Finally, if $F$ is a finitely generated group, we will write $\lambda_F:F\to U(\ell^2(F))$ for the \text{left regular representation} of $F$, defined by
\[
\Big(\lambda_F(g)\cdot f\Big)(h) = f(g^{-1}h),\quad g,h\in F,\; f\in\ell^2(F).
\]

We now have the following theorem:

\begin{thm}
Let $M=\Gamma \backslash \mathbb{H}^d$ be a $d$-dimensional geometrically finite hyperbolic orbifold and let $\phi:\Gamma \to F$ be a surjection, where $F$ is a finitely generated group. Let $\{\varphi_i\in \Hom(F,\sym_{n_i})\}_{i\in \mathbb{N}}$ be so that the permutation representations $\left(\textup{std}_{n_i}\circ \varphi_i,\ell^2_0\left(\{1,\dots,n_i\}\right)\right)$ of $F$ strongly converge to $\left(\lambda_F,\ell^2(F)\right)$. Then the associated covers $M_i=\textup{Stab}_{\varphi_i\circ \phi}(1)\backslash \mathbb{H}^d$ have no new eigenvalues below $$\lambda_0\left(\textup{ker}(\phi)\backslash \mathbb{H}^d\right)-o_{i\to \infty}(1).$$
\end{thm}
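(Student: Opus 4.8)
The plan is to recast the assertion in terms of Laplacians on $M$ twisted by unitary representations of $F$, and then to reduce it to the behaviour of operator norms in the group algebra $\CC[F]$, where the strong convergence hypothesis applies directly. The cover $M_i\to M$ is the one associated to the $\Gamma$-set $\{1,\dots,n_i\}$, on which $\Gamma$ acts through $\varphi_i\circ\phi$, so
\[
L^2(M_i)\ \cong\ \left(L^2(\HH^d)\otimes\ell^2(\{1,\dots,n_i\})\right)^{\Gamma},
\]
the diagonal $\Gamma$-invariants, with $\Delta_{M_i}$ acting as $\Delta_{\HH^d}\otimes\mathrm{id}$. The $\Gamma$-invariant splitting $\ell^2(\{1,\dots,n_i\})=\CC\oplus\ell^2_0(\{1,\dots,n_i\})$ gives a $\Delta$-invariant orthogonal decomposition $L^2(M_i)=L^2(M)\oplus\mathcal H_i$, and the new spectrum of $M_i$ is by definition $\sigma(\Delta_{\HH^d}\otimes\mathrm{id}|_{\mathcal H_i})$. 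For a unitary representation $\pi$ of $F$ write $\Delta_{M,\pi}$ for the Laplacian of $M$ twisted by $\pi\circ\phi$, i.e.\ $\Delta_{\HH^d}\otimes\mathrm{id}$ on $(L^2(\HH^d)\otimes V_\pi)^{\Gamma}$; then the new spectrum of $M_i$ is exactly $\sigma(\Delta_{M,\rho_i})$ with $\rho_i:=\mathrm{std}_{n_i}\circ\varphi_i$. By Fell absorption, $\Delta_{M,\lambda_F}$ is isometric to the Laplacian on $\ker(\phi)\backslash\HH^d$, since $L^2(\ker(\phi)\backslash\HH^d)=L^2(\HH^d)^{\ker\phi}\cong(L^2(\HH^d)\otimes\ell^2(\Gamma/\ker\phi))^{\Gamma}$ and $\Gamma/\ker\phi\cong F$, intertwining the two actions of $\Delta_{\HH^d}$. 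Hence $\lambda_0(\ker(\phi)\backslash\HH^d)=\inf\sigma(\Delta_{M,\lambda_F})$ and the theorem is equivalent to $\inf\sigma(\Delta_{M,\rho_i})\geq\inf\sigma(\Delta_{M,\lambda_F})-o_{i\to\infty}(1)$.

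\emph{Step 2: parametrix for the twisted resolvent.} Fix $\eps>0$ and put $\lambda_\star:=\inf\sigma(\Delta_{M,\lambda_F})-\eps$; new eigenvalues lie in $[0,\left(\frac{d-1}{2}\right)^2)$ by \cite{LaxPhillips}, so we may assume $\lambda_\star<\left(\frac{d-1}{2}\right)^2$, and it suffices to show $\lambda_\star\notin\sigma(\Delta_{M,\rho_i})$ for all large $i$. Since strong convergence only constrains operator norms of elements of (matrix amplifications of) $\CC[F]$, we follow \cite{HideMagee,Moy,BallmanMondalPolymerakis} and construct, simultaneously for every unitary representation $\pi$ of $F$, a parametrix $Q_\pi$ for $(\Delta_{M,\pi}-\lambda_\star)^{-1}$. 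Using geometric finiteness, decompose $M$ into a compact core and finitely many standard cusp and funnel ends; on each end the relevant model resolvent at energy $\lambda_\star$ is bounded with a bound depending only on the local geometry, hence independent of $\pi$ and of $i$ (the ends of $M_i$ and of $\ker(\phi)\backslash\HH^d$ are covers of those of $M$). Patching these with a resolvent on the core via a fixed partition of unity yields operators with $(\Delta_{M,\pi}-\lambda_\star)Q_\pi=\mathrm{id}+E_\pi$ and $Q_\pi(\Delta_{M,\pi}-\lambda_\star)=\mathrm{id}+E'_\pi$, where $E_\pi$ and $E'_\pi$ are smoothing operators supported over a compact part of $M$. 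Because only finitely many $\Gamma$-translates of a compact set meet a fundamental domain for $\Gamma$, each such operator is, up to an error of arbitrarily small norm uniformly in $\pi$, a finite sum $\sum_{g\in S}L_g\,\pi(g)$ with $S\subset F$ finite and $L_g$ fixed compact operators on the base fibre — i.e.\ it lies in operator norm in (a matrix amplification of) the image of $\CC[F]$, the uniformity coming from $\|\pi(g)\|=1$.

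\emph{Step 3: conclusion.} The construction is arranged so that $\mathrm{id}+E_\pi$ is invertible whenever $\lambda_\star\notin\sigma(\Delta_{M,\pi})$, with the norm of the inverse controlled uniformly for $\lambda_\star$ at distance $\geq\eps$ from $\sigma(\Delta_{M,\pi})$. For $\pi=\lambda_F$ we have $\lambda_\star\notin\sigma(\Delta_{M,\lambda_F})$, so $\mathrm{id}+E_{\lambda_F}$ is invertible; truncating $E$ to a finite $\CC[F]$-combination $E^{(k)}$ within $\eta$ of $E$ (uniformly in $\pi$) and passing to the self-adjoint group-algebra elements $(\mathrm{id}+E^{(k)})^*(\mathrm{id}+E^{(k)})$ and $(\mathrm{id}+E^{(k)})(\mathrm{id}+E^{(k)})^*$, strong convergence — which gives $\|\rho_i(x)\|\to\|\lambda_F(x)\|$ for every $x$ in every matrix amplification of $\CC[F]$, hence Hausdorff convergence of spectra of self-adjoint such $x$ — forces these to be bounded below in $\rho_i$ for large $i$; so $\mathrm{id}+E^{(k)}_{\rho_i}$ is invertible with uniformly bounded inverse, and then (for $\eta$ small) so is $\mathrm{id}+E_{\rho_i}$. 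Therefore $(\Delta_{M,\rho_i}-\lambda_\star)Q_{\rho_i}$ is invertible, $\Delta_{M,\rho_i}-\lambda_\star$ is surjective and, being self-adjoint, invertible, i.e.\ $\lambda_\star\notin\sigma(\Delta_{M,\rho_i})$ for large $i$. Letting $\eps\downarrow 0$ along a sequence gives the $o_{i\to\infty}(1)$ bound.

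\emph{The main obstacle.} The substantive point is Step 2, in particular making precise the implication ``$\lambda_\star\notin\sigma(\Delta_{M,\pi})\Rightarrow\mathrm{id}+E_\pi$ invertible'' with uniform control: one must account for the finitely many $L^2$-eigenfunctions of the base orbifold $M$ below $\left(\frac{d-1}{2}\right)^2$, for the contributions of the Eisenstein series and scattering matrix to its spectrum, and for the continuous spectrum generated by the ends — this is exactly where geometric finiteness and the explicit analysis of the standard cusp and funnel models are used. This analysis is carried out in \cite{HideMagee} and extended in \cite{Moy,BallmanMondalPolymerakis}, and their arguments apply here with no essential change; the only new feature is the pre-composition with the fixed surjection $\phi$, which is harmless since everything is phrased in terms of representations of $F$ and $\{\varphi_i\}$ is assumed to converge strongly as a sequence of representations of $F$.
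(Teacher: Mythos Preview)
Your proposal is correct and follows exactly the route the paper indicates: the paper gives no self-contained proof, merely noting that the result follows from the arguments of \cite{HideMagee}, \cite{Moy}, and \cite{BallmanMondalPolymerakis} with minor adaptations to the model resolvents in the ends. Your sketch --- twisted Laplacians, a parametrix built from end-model resolvents patched over a compact core, reduction of the error term to a finite $\CC[F]$-combination, and invocation of strong convergence --- is precisely that argument, and is in fact spelled out in more detail than the paper's own discussion.
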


This can be obtained from the arguments of \cite{HideMagee} with only some minor adaptations to deal with the model resolvent in the ends. The arguments of \cite{Moy} and also \cite{BallmanMondalPolymerakis} extend immediately to  give the conclusion.

We will combine this with a special case of \cite[Theorem 3]{BordenaveCollins}:
\begin{thm}[Bordenave--Collins]
Let $F=(\ZZ/2\ZZ)^{\ast 4}$ and $\varphi_n:F \to \sym_{2n}$ be as in Section \ref{sec_model} above, then in probabiltiy, $\mathrm{std}_{2n}\circ\varphi_n:F\to U(\ell_0^2(\{1,\ldots,2n\}))$ strongly converges to $(\lambda_F,\ell^2(F))$ as $n\to\infty$. 
\end{thm}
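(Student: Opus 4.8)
The plan is to deduce this from the general strong convergence theorem of Bordenave and Collins \cite[Theorem 3]{BordenaveCollins}; the only work is to check that our setup is an instance of their framework and to match conventions. Write $\pi_n = \mathrm{std}_{2n}\circ\varphi_n$. Strong convergence of $(\pi_n,\ell_0^2(\{1,\dots,2n\}))$ to $(\lambda_F,\ell^2(F))$ unwinds into two assertions: first, \emph{weak} convergence, namely $\frac{1}{2n-1}\tr\big(\pi_n(w)\big)\to\tau\big(\lambda_F(w)\big)$ in probability for every $w$ in the group algebra of $F=(\ZZ/2\ZZ)^{\ast 4}$, where $\tau$ is the canonical trace on $C^*_r(F)$; and second, the norm statement $\norm{\pi_n(w)}\to\norm{\lambda_F(w)}$ in probability for every $w$ (it suffices to treat self-adjoint $w$, the generators themselves being self-adjoint unitaries since $t_j^2=e$). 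The first assertion is the soft part. Since each generator $t_j$ is an involution, $\varphi_n(t_j)$ is a uniformly random fixed-point-free involution, so the Schreier graph of $\pi_n$ is the edge-disjoint union of four independent uniform perfect matchings on $2n$ points; a first moment computation shows this graph is locally tree-like, hence converges in the Benjamini--Schramm sense to the Cayley graph of $F$ with respect to $\{t_1,\dots,t_4\}$, which is the $4$-regular tree, and a second moment computation upgrades the convergence of expected trace moments to convergence in probability. This gives the weak convergence, and in particular, for every $\eps>0$, the bound $\norm{\pi_n(w)}\ge\norm{\lambda_F(w)}-\eps$ holds with probability tending to one.

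What remains is the matching upper bound $\limsup_n\norm{\pi_n(w)}\le\norm{\lambda_F(w)}$ in probability, and this is precisely the content of \cite[Theorem 3]{BordenaveCollins}. Their result establishes strong asymptotic freeness for random permutation representations attached to free products of finite (and free) groups, and the free product $(\ZZ/2\ZZ)^{\ast 4}$ with each cyclic factor represented by an independent uniform random perfect matching on $2n$ letters is a special case: the $\ZZ/2\ZZ$ factors are handled through uniform involutions, and having exactly four factors imposes no constraint. I would quote their theorem after the routine dictionary: non-commutative polynomials in the permutation matrices $\pi_n(t_j)$ correspond to elements $w$ of the group algebra, $\ell_0^2$ is the invariant complement of the trivial subrepresentation (which only shifts the normalizing dimension by one), and $(\lambda_F,\ell^2(F))$ is the limiting object. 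Equivalently, via Ihara--Bass one can phrase the upper bound as a bound on the spectral radius of the non-backtracking operator of the random Schreier graph, which is close to the form in which Bordenave--Collins prove it.

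The genuinely hard step is this upper bound, and I would not reprove any of it: in \cite{BordenaveCollins} it runs along the lines of Bordenave's proof of Friedman's theorem — a trace/path-counting estimate for high powers of the (operator-coefficient) non-backtracking operator, with the exponent growing like $\log n$, together with the switching and tangle analysis needed to control atypical local structure. The only verification genuinely on our side is that the uniform measure on fixed-point-free involutions of $\{1,\dots,2n\}$ is the correct model for a $\ZZ/2\ZZ$-generator in their setting, and that the conclusion specializes to the statement that $\pi_n$ has no new spectrum beyond $\norm{\lambda_F(w)}+o(1)$; this is the same reduction already carried out in \cite{HideMagee,MageePudervanHandel} and surveyed in \cite{Magee_survey,vanHandel_survey}.
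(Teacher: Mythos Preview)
Your proposal is correct and matches the paper's approach: the paper simply states this as a special case of \cite[Theorem 3]{BordenaveCollins} and gives no further argument, so your explanation of why the random perfect matching model for $(\ZZ/2\ZZ)^{\ast 4}$ fits their framework is already more than the paper provides. The additional discussion of weak convergence and the structure of the Bordenave--Collins argument is accurate but not needed for the paper's purposes.
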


This, together with the work we have done in Section \ref{sec_spectral_input} allows us to prove Theorems \ref{thm_main1} and \ref{thm_main2}.

\begin{proof}[Proof of Theorems \ref*{thm_main1} and \ref*{thm_main2}] 
Theorem \ref*{thm_main1} follows from the two theorems above, together with the fact that there is only one old eigenvalue coming from $\Gamma_{\mathrm{Ap}}\backslash\HH^3$, as proved in \cite{KelmerKontorovichLutsko}.

Let us now prove Theorem \ref{thm_main2}. We will write $M_n = \Gamma_n\backslash\HH^3$. Note that this is a slight abuse of notation, because it happens that $M_n$ is disconnected. However, the asymptotic probability of this event is $0$. Theorem \ref{thm_main2} (1) is now a consequence of the two theorems above, combined with Proposition \ref{prp_spectrum_building_blocks}, which implies that there is no old spectrum (spectrum of $\Gamma_{\mathrm{SA}}\backslash\HH^3$) in the interval $(0,1)$. 

To prove Theorem \ref{thm_main2} (2), we note that the spectrum on even functions on $DM_n$ comes from the spectrum on $M_n$, so this has been settled. The spectral gap on odd functions converges to the spectral gap on odd functions on the orientation double cover $D(\Gamma_\infty\backslash\HH^3)$ of $\Gamma_\infty\backslash\HH^3$ (again using the fact that by Proposition \ref{prp_spectrum_building_blocks} there is no old spectrum). This is the double of the octa-tree. So this is the same as the bottom of the Dirichlet spectrum of the octa-tree. The octa-tree is a closed domain in $\HH^3$. So, by domain monotonicity, the bottom of its Dirichlet spectrum lies above that of $\HH^3$, which means it's above $1$.
\end{proof}

\section{Delocalization}

Now that we have proved Theorems \ref{thm_main1} and \ref{thm_main2}, we start preparing for the proof of Theorem \ref{thm_main3}. In this section we prove delocalization estimates on eigenfunctions. First we prove a delocalization result that is a $3$-dimensional version of \cite[Theorem 4.1]{GLMST} and \cite[Proposition 6.1]{Magee_bassnotes}. After that, we introduce a variant of tangle-freeness for manifolds that locally look like a cover (not necessarily their universal cover) and prove a bound on the $L^\infty$-norm of eigenfunctions on manifolds that have this property.

\subsection{Spherical caps}
Before we get to it, we need the volume of a spherical cap in hyperbolic $3$-space:
\begin{lem}\label{lem_spherical_cap_volume}
Let $x,y\in\HH^3$ and set $\delta = \dist(x,y)$, then
\begin{multline*}
\vol\Big(\st{z\in \HH^3}{\dist(z,x)\leq T \text{ and } d(z,y)\leq T}\Big) \\ = 4\pi \int_0^{r_0} \sinh^2(r) \mathrm{d}r + 2\pi \int_{r_0}^T \sinh^2(r) \cdot \left(1 - \frac{\cosh(r)\cosh(\delta)-\cosh(T)}{\sinh(r)\sinh(\delta)}\right) \mathrm{d}r,
\end{multline*}
where $r_0 = \max\{0,T-\delta\}$.
\end{lem}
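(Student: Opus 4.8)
The plan is to compute the volume by passing to geodesic polar coordinates centred at $x$, in which the hyperbolic volume element is $\sinh^2(r)\,dr\,d\omega$, where $r$ denotes the distance to $x$ and $d\omega$ is the standard area form on the unit sphere of directions $S^2$ at $x$, normalised so that $\int_{S^2}d\omega=4\pi$. Writing $z=(r,\omega)$, the constraint $\dist(z,x)\leq T$ is simply $r\leq T$, so the first step is to rewrite the volume as $\int_0^T A(r)\,\sinh^2(r)\,dr$, where $A(r)$ is the $d\omega$-area of the set of directions $\omega$ such that the point at distance $r$ from $x$ in direction $\omega$ also satisfies $\dist(z,y)\leq T$. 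To identify $A(r)$, for $r,\delta>0$ let $\theta=\theta(\omega)\in[0,\pi]$ be the angle at $x$ between $\omega$ and the unit vector pointing towards $y$; the hyperbolic law of cosines gives $\cosh\dist(z,y)=\cosh(r)\cosh(\delta)-\sinh(r)\sinh(\delta)\cos(\theta)$, so $\dist(z,y)\leq T$ is equivalent to $\cos(\theta)\geq c(r)$, where
\[
c(r):=\frac{\cosh(r)\cosh(\delta)-\cosh(T)}{\sinh(r)\sinh(\delta)}.
\]
By rotational symmetry about the geodesic through $x$ and $y$, the set $\{\omega\in S^2:\cos\theta(\omega)\geq c(r)\}$ is a spherical cap, whose area is $4\pi$ if $c(r)\leq-1$, is $2\pi\int_0^{\arccos(c(r))}\sin(\theta)\,d\theta=2\pi(1-c(r))$ if $-1\leq c(r)\leq1$, and is $0$ if $c(r)\geq1$.

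It then remains to locate $c(r)$ relative to $\pm1$ as $r$ ranges over $[0,T]$. Using $\cosh(r\pm\delta)=\cosh(r)\cosh(\delta)\pm\sinh(r)\sinh(\delta)$, one finds that $c(r)\leq-1$ is equivalent to $\cosh(r+\delta)\leq\cosh(T)$, i.e. to $r\leq T-\delta$; hence $A(r)=4\pi$ precisely for $r\in[0,r_0]$ with $r_0=\max\{0,T-\delta\}$ (and $c(r_0)=-1$ when $r_0>0$). Likewise $c(r)\leq1$ is equivalent to $\cosh(r-\delta)\leq\cosh(T)$, i.e. to $|r-\delta|\leq T$, which holds for every $r\in[0,T]$ since $0\leq\delta\leq T$; hence $A(r)=2\pi(1-c(r))$ for $r\in[r_0,T]$. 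Substituting these two expressions for $A(r)$ into $\int_0^T A(r)\,\sinh^2(r)\,dr$ yields exactly the claimed identity. The degenerate cases are immediate: if $\delta=0$ then $x=y$, the region is a ball of radius $T$, $r_0=T$, and the second integral is empty; the locus $r=0$ is of measure zero throughout.

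There is no genuinely hard step here: the argument is a one-variable reduction via polar coordinates combined with the law of cosines. The only point requiring a little care is the bookkeeping in the second paragraph — verifying that the a priori clipped cap-area function equals $4\pi$ on $[0,r_0]$ and $2\pi(1-c(r))$ on $[r_0,T]$ — which is where the $\cosh$-addition identities and the hypothesis $\delta\leq T$ enter. (When $T<\delta\leq2T$ the region is still nonempty, but one must then take $\delta-T$ rather than $r_0$ as the lower limit of the second integral; this range plays no role in the applications, where $\delta$ is uniformly bounded while $T\to\infty$.)
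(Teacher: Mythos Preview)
Your proof is correct and follows essentially the same route as the paper: geodesic polar coordinates about $x$, the hyperbolic law of cosines to turn $\dist(z,y)\leq T$ into a threshold $\cos\theta\geq c(r)$, and the spherical-cap area formula $2\pi(1-\cos\theta)$ to integrate out the angular variables. Your bookkeeping is in fact slightly more careful than the paper's---you explicitly verify via the $\cosh$-addition formulas that $c(r)\in[-1,1]$ on $[r_0,T]$ and flag the tacit assumption $\delta\leq T$---whereas the paper simply writes down the decomposition of the region without checking these endpoint conditions.
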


\begin{proof}
We use polar coordinates $z=(r,\theta,\varphi)$ in which $r$ denotes the hyperbolic distance to $x$ and $\theta$ and $\phi$ are measured with respect to the geodesic $\alpha$ through $x$ and $y$: $\theta$ is the angle between $\alpha$ and the geodesic through $z$ and $x$ and $\varphi$ is the angle around the axis $\alpha$. With respect to these coordinates, the volume element takes the form
\[
\mathrm{d}\vol_{\HH^3} = \sinh^2(r) \; dA_{\Sphere^2} \;\mathrm{d}r,
\]
where $dA_{\Sphere^2} = \sin(\theta)d\theta d\varphi$ denotes the area element on the $2$-sphere of radius $1$. 

Using for instance the formulas of \cite[p. 454]{Buser_book}, we deduce that
\[
\cosh\Big(\dist\Big( (r,\theta,\varphi),y\Big)\Big) = \cosh(r)\cosh(\delta)-\sinh(r)\sinh(\delta) \cos(\theta). 
\]
This means that in these coordinates
\begin{align*}
C & = \st{z\in \HH^3}{\dist(z,x)\leq T \text{ and } d(z,y)\leq T} \\
& = \st{(r,\theta,\varphi)}{ r\leq r_0 } \\
& \quad \quad \cup \st{(r,\theta,\varphi)}{ r_0 \leq r \leq T \text{ and } \cos(\theta) \geq \frac{\cosh(r)\cosh(\delta)-\cosh(T)}{\sinh(r)\sinh(\delta)} },
\end{align*}
where $r_0 = \max\{0,T-\delta\}$.

Now the area of a spherical cap in $\Sphere^2$, defined by all points of angle at most $\theta$, is $2\pi(1-\cos(\theta))$. This means that all in all,
\[
\vol(C)  = 4\pi \int_0^{r_0} \sinh^2(r) \mathrm{d}r + 2\pi \int_{r_0}^T \sinh^2(r) \cdot \left(1 - \frac{\cosh(r)\cosh(\delta)-\cosh(T)}{\sinh(r)\sinh(\delta)}\right) \mathrm{d}r.
\]
\end{proof}

We note that the integrals above can be evaluated and this leads to a closed formula. However, the length of this formula is such that it's hard to extract more information from it than from the integral expression. We will bound the volume as follows:
\begin{lem}\label{lem_spherical_cap_bound}
There exists a universal constant $C>0$ such that for all $x,y\in\HH^3$ 
\[
\vol\Big(\st{z\in \HH^3}{\dist(z,x)\leq T \text{ and } d(z,y)\leq T}\Big) \leq  C\cdot e^{2T-\dist_{\HH^3}(x,y)}.
\]
\end{lem}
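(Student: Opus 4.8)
The plan is to \emph{enclose} the lens
$C:=\{\,z\in\HH^3:\dist(z,x)\le T,\ \dist(z,y)\le T\,\}$ inside an explicit region — a metric ball intersected with a horoball — whose volume can be computed by hand in the upper half-space model. It is worth noting at the outset why one cannot just use a ball: the ``obvious'' enclosing ball, of radius $\sqrt{T^2-\delta^2/4}$ about the midpoint of $[x,y]$, is far too large — its volume is of order $e^{\sqrt{4T^2-\delta^2}}$, which for intermediate values of $\delta$ dwarfs $e^{2T-\delta}$. One really has to exploit that $C$ is a thin lens, not a round ball. Write $\delta=\dist(x,y)$. If $\delta>2T$ then $C=\emptyset$ by the triangle inequality; if $\delta=0$ then $C$ is a metric ball of radius $T$, so $\vol(C)=\pi(\sinh(2T)-2T)\le\tfrac\pi2 e^{2T}=\tfrac\pi2 e^{2T-\delta}$. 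In both cases we are done, so assume $0<\delta\le 2T$.

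Since $\Isom(\HH^3)$ acts transitively on ordered pairs of points at distance $\delta$, I would normalize so that $x=(0,0,1)$ and $y=(0,0,e^{\delta})$ in the upper half-space model. From the explicit distance formula $\cosh\dist\big((w,t),(w',t')\big)=1+\tfrac{|w-w'|^2+(t-t')^2}{2tt'}$ one reads off
\[
\cosh\dist\big((w,t),(0,0,e^{\delta})\big)\ \ge\ 1+\frac{(t-e^{\delta})^2}{2te^{\delta}}=\frac{t^{2}+e^{2\delta}}{2te^{\delta}}=\cosh\!\big(\log t-\delta\big),
\]
so $\dist\big((w,t),(0,0,e^{\delta})\big)\ge|\log t-\delta|$ (equivalently, $z\mapsto\log t(z)$ is $1$-Lipschitz on $\HH^3$, being minus a Busemann function). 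Hence every $z=(w,t)\in C$ satisfies $t\ge e^{\delta-T}$. Moreover $z\in B\big((0,0,1),T\big)$, and that metric ball is, in Euclidean terms, the ball $\{\,|w|^2+(t-\cosh T)^2\le\sinh^2T\,\}$; its horizontal slice at height $t$ is a Euclidean disc of area $\pi\big(\sinh^2T-(t-\cosh T)^2\big)=\pi(e^T-t)(t-e^{-T})$, nonempty only for $t\in[e^{-T},e^T]$. Since $\delta>0$ gives $e^{\delta-T}>e^{-T}$, combining the two constraints and integrating the volume form $t^{-3}\,\mathrm dx\,\mathrm dy\,\mathrm dt$ over the slices yields
\[
\vol(C)\ \le\ \int_{e^{\delta-T}}^{e^{T}}\frac{\pi\,(e^{T}-t)(t-e^{-T})}{t^{3}}\,\mathrm dt.
\]

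This last integral is elementary: expanding $(e^{T}-t)(t-e^{-T})=(e^{T}+e^{-T})t-1-t^{2}$ and integrating termwise (antiderivative $-(e^{T}+e^{-T})/t+1/(2t^{2})-\log t$) gives
\[
\vol(C)\ \le\ \pi\Big(e^{2T-\delta}-\tfrac12 e^{2T-2\delta}+e^{-\delta}-\tfrac12 e^{-2T}-1+\delta-2T\Big).
\]
Using $0\le\delta\le 2T$, each of the terms $-\tfrac12 e^{2T-2\delta}$, $-\tfrac12 e^{-2T}$, $e^{-\delta}-1$ and $\delta-2T$ is $\le 0$, so the right-hand side is at most $\pi e^{2T-\delta}$. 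Together with the two trivial cases, this shows the lemma holds with $C=\pi$.

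The one genuinely non-routine point is choosing the enclosing region: a metric ball alone is hopeless, and the fix is to slice it by the horoball $\{\,t\ge e^{\delta-T}\,\}$, i.e.\ the sublevel set of the Busemann function toward the forward endpoint of the ray from $x$ through $y$. After that the computation is purely mechanical. (As an alternative one could instead bound the integral in \lemref{lem_spherical_cap_volume} directly — there the relevant integrand simplifies to $\tfrac{\sinh r}{\sinh\delta}\big(\cosh T-\cosh(\delta-r)\big)$ — but carrying the cases $\delta\le T$ and $\delta>T$ through that estimate is messier than the argument above.)
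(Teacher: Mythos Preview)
Your proof is correct and takes a genuinely different route from the paper's. The paper works directly from the integral formula of \lemref{lem_spherical_cap_volume}: it bounds the inner-ball piece $4\pi\int_0^{r_0}\sinh^2 r\,\mathrm dr$ by $\ll e^{2T-2\delta}$, extracts the term $2\pi\int_{r_0}^T\tfrac{\sinh r\,\cosh T}{\sinh\delta}\,\mathrm dr\ll e^{2T-\delta}$ from the cap integrand, and observes that the remaining piece is non-positive since $\cosh\ge\sinh$. Your argument instead normalizes $x,y$ to the vertical axis in the upper half-space model, encloses the lens in $B(x,T)\cap\{t\ge e^{\delta-T}\}$ via the Busemann-function bound $\dist(z,y)\ge|\log t(z)-\delta|$, and integrates in horizontal slices. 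The payoff is that your proof is self-contained (no appeal to \lemref{lem_spherical_cap_volume}), produces an explicit constant $C=\pi$, and makes the geometric source of the $e^{-\delta}$ saving transparent --- it comes from the horoball cutoff. The paper's approach is shorter once the polar-coordinate lemma is available, but less explicit.

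One small correction to your motivational aside: the claim that a metric ball about the midpoint is ``hopeless'' rests on the Euclidean radius $\sqrt{T^2-\delta^2/4}$, which is not the right quantity in $\HH^3$. Adding the two conditions $\cosh\dist(z,x)\le\cosh T$ and $\cosh\dist(z,y)\le\cosh T$ (via the hyperbolic law of cosines at the midpoint $m$) gives $\cosh\dist(z,m)\le\cosh T/\cosh(\delta/2)$, and the resulting ball already has volume $\ll e^{2T-\delta}$. So the ball approach does in fact work; your horoball argument is simply another (equally clean) way to get there. This does not affect the validity of your proof, only the framing.
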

\begin{proof}
We have $r_0\leq T-\dist_{\HH^3}(x,y)$, which means that
\[
4 \pi \int_0^{r_0} \sinh^2(r)\mathrm{d}t \ll e^{2T-2\dist_{\HH^3}(x,y)}. 
\]
Likewise
\[
2\pi \int_{r_0}^T \frac{\sinh(r)\cosh(T)}{\sinh(\dist_{\HH^3}(x,y))} \mathrm{d}r \ll e^{2T-\dist_{\HH^3}(x,y)}.
\]
Finally, using that $\cosh(x)\geq \sinh(x)$,
\[
2\pi\int_{r_0}^T \sinh^2(r) \left(1-\frac{\cosh(r)\cosh(\dist_{\HH^3}(x,y))}{\sinh(r)\sinh(\dist_{\HH^3}(x,y))}\right)\mathrm{d}r \leq 0
\]
\end{proof}

\subsection{Bounds coming from the pre-trace formula}
We now state the delocalization result we need:

\begin{prp}\label{prp_deloc}
Let $\Gamma<\Isom(\HH^3)$ be a geometrically finite discrete group. Moreover suppose that $\lambda < 1$ and that $f:\Gamma\backslash\HH^3\to\RR$ is an $L^2$-normalized Laplacian eigenfunction of eigenvalue $\lambda$. Then
\[
\abs{f(x)}^2 \ll \frac{1-\lambda}{\sinh^2(T\sqrt{1-\lambda})} \cdot   \sum_{\substack{\gamma\in\Gamma \text{ s.t.}\\ \dist_{\HH^3}(x,\gamma x) \leq T}}  e^{-\dist_{\HH^3}(x,\gamma x) } ,
\]
for all $x\in\Gamma\backslash\HH^3$ and all $T\geq 1$. The implied constant in the above is independent of both $T$ and $\lambda$.
\end{prp}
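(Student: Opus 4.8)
The plan is to run the classical pre-trace / spherical-mean-value argument, exploiting that for an eigenvalue $\lambda<1$ the relevant spherical function in $\HH^3$ is \emph{positive}, so that the averaging identity involves no cancellation. First I would pass to $\HH^3$: fix a lift $\tilde x\in\HH^3$ of $x$ and let $\tilde f$ be the $\Gamma$-invariant lift of $f$, which still satisfies $\Delta\tilde f=\lambda\tilde f$. The spherical function of eigenvalue $\lambda$ in $\HH^3$, normalised to equal $1$ at the centre, is $\phi_\lambda(r)=\sinh(r\sqrt{1-\lambda})/\big(\sqrt{1-\lambda}\,\sinh r\big)$, which is $>0$ for every $r>0$ as soon as $\lambda\in[0,1)$. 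The mean value property for eigenfunctions gives $\int_{\partial B_r(\tilde x)}\tilde f\,\mathrm{d}\sigma=4\pi\sinh^2(r)\,\phi_\lambda(r)\,\tilde f(\tilde x)$; integrating over $r\in[0,T]$ via the coarea formula,
\[
\int_{B_T(\tilde x)}\tilde f\,\mathrm{d}\vol=W_\lambda(T)\,\tilde f(\tilde x),\qquad W_\lambda(T):=\frac{4\pi}{\sqrt{1-\lambda}}\int_0^T\sinh(r)\,\sinh(r\sqrt{1-\lambda})\,\mathrm{d}r>0.
\]
Hence $\abs{f(x)}=\abs{\tilde f(\tilde x)}\leq W_\lambda(T)^{-1}\int_{B_T(\tilde x)}\abs{\tilde f}$.

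Next I would unfold. Writing $\HH^3$ as a disjoint union of $\Gamma$-translates of a fundamental domain and using $\Gamma$-invariance of $\tilde f$,
\[
\int_{B_T(\tilde x)}\abs{\tilde f}\,\mathrm{d}\vol=\int_{\Gamma\backslash\HH^3}K_T(x,y)\,\abs{f(y)}\,\mathrm{d}y,\qquad K_T(x,y):=\#\st{\gamma\in\Gamma}{\dist_{\HH^3}(\tilde x,\gamma\tilde y)\leq T}.
\]
Cauchy--Schwarz together with $\norm{f}_2=1$ gives $\abs{f(x)}\leq W_\lambda(T)^{-1}\norm{K_T(x,\cdot)}_{L^2(\Gamma\backslash\HH^3)}$, and expanding the square and unfolding once more,
\[
\norm{K_T(x,\cdot)}_2^2=\sum_{\gamma\in\Gamma}\vol\Big(\st{z\in\HH^3}{\dist_{\HH^3}(z,\tilde x)\leq T\text{ and }\dist_{\HH^3}(z,\gamma\tilde x)\leq T}\Big).
\]
By \lemref{lem_spherical_cap_bound} each summand is at most $C\,e^{2T-\dist_{\HH^3}(\tilde x,\gamma\tilde x)}$ and vanishes unless $\dist_{\HH^3}(\tilde x,\gamma\tilde x)\leq 2T$, so $\norm{K_T(x,\cdot)}_2^2\leq C\,e^{2T}\sum_{\gamma}e^{-\dist_{\HH^3}(x,\gamma x)}$, the sum over those $\gamma$ with $\dist_{\HH^3}(x,\gamma x)\leq 2T$.

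Combining the two displays, $\abs{f(x)}^2\leq C\,e^{2T}\,W_\lambda(T)^{-2}\sum_\gamma e^{-\dist_{\HH^3}(x,\gamma x)}$, and it remains only to prove the elementary inequality $W_\lambda(T)\gg e^{T}\sinh(T\sqrt{1-\lambda})/\sqrt{1-\lambda}$, uniformly over $\lambda\in[0,1)$ and $T\geq1$; this yields $e^{2T}W_\lambda(T)^{-2}\ll(1-\lambda)/\sinh^2(T\sqrt{1-\lambda})$ and hence the claim with a universal implied constant. I expect this last estimate to be the only genuinely computational point and the place where $T\geq1$ is used: both sides are comparable to $e^{(1+\sqrt{1-\lambda})T}/\sqrt{1-\lambda}$, which one checks by treating the regimes $T\sqrt{1-\lambda}\geq1$ and $T\sqrt{1-\lambda}<1$ separately (in the latter using $\sinh(r\sqrt{1-\lambda})\asymp r\sqrt{1-\lambda}$ on $[0,T]$ together with $\int_0^T r\sinh r\,\mathrm{d}r=T\cosh T-\sinh T\asymp Te^T$).

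The structural reason the scheme is confined to $\lambda<1$ is exactly that $\phi_\lambda$ changes sign once $\lambda>1$, so that $W_\lambda(T)$ is no longer bounded below and the averaging identity can no longer be inverted; there is no conceptual difficulty beyond this and the exponent bookkeeping between the $e^{2T}$ of \lemref{lem_spherical_cap_bound} and $W_\lambda(T)^{-2}$. I note that the unfolding naturally produces the truncation $\dist_{\HH^3}(x,\gamma x)\leq 2T$ rather than $\leq T$; since the proposition is applied only for specific choices of $T$ this makes no difference in the sequel, and I would simply carry the estimate in that form (or shrink $T$ by the fixed factor $2$ wherever it is invoked).
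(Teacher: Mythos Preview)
Your proof is correct and is essentially a repackaging of the paper's argument. The paper works through the pre-trace inequality of Gamburd, choosing the kernel $K_T=k_T\ast k_T$ with $k_T(r)=\ind_{[0,T]}(r)/\sinh T$; its Selberg transform $H_T=\lvert h_T\rvert^2$ is automatically nonnegative, and the identity $K_T(\dist(x,y))=\sinh^{-2}(T)\,\vol\bigl(B_T(x)\cap B_T(y)\bigr)$ feeds directly into \lemref{lem_spherical_cap_bound}. Your mean-value-plus-Cauchy--Schwarz route arrives at exactly the same objects: the weight $W_\lambda(T)$ equals $2\sinh(T)\,h_T(\lambda)$ up to a constant, and unfolding $\norm{K_T(x,\cdot)}_2^2$ reproduces the intersection-of-balls sum, so the two proofs invoke the same key lemma and the same elementary lower bound on $\int_0^T\sinh(r)\sinh(r\sqrt{1-\lambda})\,\mathrm{d}r$. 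Your version is mildly more self-contained in that it avoids quoting the spectral pre-trace inequality, at the cost of the explicit Cauchy--Schwarz step; the $2T$ versus $T$ truncation discrepancy you flag is present in the paper's argument as well and is, as you say, harmless.
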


\begin{proof} First, we need a version of \cite[Proposition 5.2]{Gamburd}. Let $K:[0,\infty)\to\RR$ be compactly supported such that its Selberg transform $H$ is non-negative. Then for any $x\in\HH^3$,
\[
\sum_{j:\;\lambda_j \leq 1}  H(\lambda_j) \abs{f_j(x)}^2 \leq \sum_{\gamma\in\Gamma} K\Big(\dist_{\HH^3}(\gamma\cdot x,x)\Big),
\]
where $0=\lambda_0 < \lambda_1 \leq \ldots \leq \lambda_q \leq 1$ are the small Laplacian eigenvalues of $\Gamma\backslash\HH^3$ and $f_1,\ldots f_q$ are and $L^2$-normalized basis of the corresponding eigenspaces (the fact that this is a finite set follows from \cite{LaxPhillips}). The proof of this bound is the same as Gamburd's proof, applied in dimension $3$ (see also \cite[(5.2)]{Magee_quantitative}).

The Selberg transform for $\HH^3$ is given by
\[
H(\lambda) = \frac{2\pi}{s}\int_0^\infty K(r) \sinh(s r)\sinh(r) \mathrm{d}r
\]
for $\lambda = 1-s^2$ (see for instance \cite[Theorem 3.5.3]{ElstrodtGrunewaldMennicke}). This is a $\ast$-morphism with respect to convolution of functions, which we will use to get the non-negativity we need.

We will fix some parameter $T\geq 1$ and set 
\[
k_T(r) = \frac{\ind_{[0,T]}(r)}{\sinh(T)}
\]
so that for $\lambda\in (0,1)$
\[
h_T(\lambda) = \frac{2\pi}{\sinh(T)\cdot s}\int_0^T \sinh(s r)\sinh(r)\mathrm{d}r \gg \frac{\sinh\left(T \sqrt{1-\lambda} \right)}{\sqrt{1-\lambda}},
\]
where the implied constant is independent of both $\lambda$ and $T$.

Now we set $K_T=k_T\ast k_T$, which means that its Selberg transform $H$ satisfies
\[
H_T(\lambda) = \abs{h_T(\lambda)}^2 \gg \frac{\sinh^2\left(T\cdot\sqrt{1-\lambda} \right) }{1-\lambda}.
\]
The function $K_T$ itself satisfies
\[
K(\dist_{\HH^3}(x,y)) = \frac{1}{\sinh^2(T)} \int_{\HH^3} \ind_{B_T(x)}(z) \cdot \ind_{B_T(z)}(y)\; \mathrm{d}\vol_{\HH^3}(z). 
\]
Now we use Lemma \ref{lem_spherical_cap_bound} and put all of the above together, to obtain
\begin{multline*}
 \frac{\sinh^2\left(T\cdot \sqrt{1-\lambda}\right)}{1-\lambda} \cdot \abs{f(x)}^2 \ll H_T(\lambda) \cdot \abs{f(x)}^2 \\
  \leq \sum_{\gamma\in\Gamma} K(\dist_{\HH^3}(\gamma\cdot x,x)) \ll \sum_{\gamma\in\Gamma \text{ s.t. } \dist_{\HH^3}(x,\gamma x) \leq T}  e^{-\dist_{\HH^3}(x,\gamma x)} ,
\end{multline*}
which implies the proposition.
\end{proof}

\subsection{The contribution from the normal subgroup}

Now we will prove a bound on the contribution coming from a normal subgroup to the sum on the right hand side in Proposition \ref{prp_deloc}. How close a point is to the cusps will play a role in this, so we will use the quantities that we've introduced in Section \ref{sec_geometry_of_cusps}. We note that in the lemma below, since $\Lambda$ is considered to be fixed throughout, we could absorb the areas $\area_{\Lambda}(\mathbf{a})$ and lengths $\mathrm{length}_{\Lambda}(\mathbf{a})$ in the constants. However, this dependence will play a role when we consider similar bounds for finite index subgroups of $\Gamma_0$, so we've decided to keep them in.

\begin{lem}\label{lem_kernel_growth}
Suppose that $\Gamma_0 < \Isom(\HH^3)$ is a lattice and $\Lambda \triangleleft \Gamma_0$ has critical exponent $\delta=\delta(\Lambda)$. Then for every $\eps>0$, there exists a constant $C>0$, depending on $\Lambda$, $\Gamma_0$ and $\eps$ only, such that for all $x\in\HH^3$:
\[
\sum_{\substack{\gamma\in\Lambda \text{ s.t.} \\ \dist_{\HH^3}(x,\gamma \cdot x) \leq T}} e^{-\dist_{\HH^3}(x,\gamma \cdot x)} \leq C \cdot \Big( e^{(\delta-1+\eps)\cdot  T} + R_{\mathrm{cusp}}(x,T)\Big),
\]
where
\[
R_{\mathrm{cusp}}(x,T) = \sum_{\substack{\mathbf{a} \text{ a cusp of }\Gamma_0 \\ \text{such that }t_{\mathbf{a}}(x)\geq 1}} R_{\mathbf{a}}(x,T)
\]
and
\[
R_{\mathbf{a}}(x,T) = \left\{ \begin{array}{ll}
 \frac{t_{\mathbf{a}}(x)^2}{\mathrm{area}_{\Lambda}(\mathbf{a})} \cdot \log\left( \frac{t_{\mathbf{a}}(x)^2\cdot \sinh(T/2)}{\mathrm{area}_{\Lambda}(\mathbf{a})}\right) & \text{if } \mathrm{rank}_{\Lambda}(\mathbf{a}) = 2 \\[2mm]
 \frac{t_{\mathbf{a}}(x)}{\mathrm{length}_{\Lambda}(\mathbf{a})} \cdot \log\left( \frac{t_{\mathbf{a}}(x)\cdot \sinh(T/2)}{\mathrm{length}_{\Lambda}(\mathbf{a})}\right)  & \text{if }  \mathrm{rank}_{\Lambda}(\mathbf{a}) = 1 \\[2mm]
 0 & \text{otherwise}.
\end{array}\right.
\]
\end{lem}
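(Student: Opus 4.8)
The plan is to split the orbit sum according to how far the point $x$ is from the cusps of $\Gamma_0$, and within the non-cusp region to use the definition of the critical exponent directly. First I would fix once and for all a fundamental domain $\calF_0$ for $\Gamma_0$ and a decomposition $\HH^3 = K \cup \bigcup_{\mathbf{a}} H_{\mathbf{a}}$, where $K$ is a relatively compact piece and each $H_{\mathbf{a}}$ is a standard horoball neighbourhood of the cusp $\mathbf{a}$, normalized via the $h_{\mathbf{a}}$ as in Section \ref{sec_geometry_of_cusps} (so that $t_{\mathbf{a}}(x)\geq 1$ exactly describes $x\in H_{\mathbf{a}}$). For $x\in K$, the displacement function $\gamma\mapsto \dist_{\HH^3}(x,\gamma x)$ differs from $\dist_{\HH^3}(x_0,\gamma x_0)$ for a fixed basepoint $x_0$ by a bounded amount, so
\[
\sum_{\substack{\gamma\in\Lambda \\ \dist_{\HH^3}(x,\gamma x)\leq T}} e^{-\dist_{\HH^3}(x,\gamma x)} \ll \sum_{\substack{\gamma\in\Lambda \\ \dist_{\HH^3}(x_0,\gamma x_0)\leq T+O(1)}} e^{-\dist_{\HH^3}(x_0,\gamma x_0)},
\]
and the right-hand side is $\ll e^{(\delta-1+\eps)T}$ by a standard dyadic decomposition of the orbit counting function: the definition \eqref{eq_def_crit_exp} gives $\#\{\gamma\in\Lambda : \dist_{\HH^3}(x_0,\gamma x_0)\leq R\}\ll e^{(\delta+\eps)R}$, and summing $e^{-R}$ against this over dyadic shells $R\in[2^{k},2^{k+1}]$ up to $T$ produces the claimed bound. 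This takes care of the ``main term'' $e^{(\delta-1+\eps)T}$ uniformly for $x$ in the compact part.

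The real content is the cusp contribution. Fix a cusp $\mathbf{a}$ of $\Gamma_0$ with $t_{\mathbf{a}}(x)\geq 1$; after conjugating by $h_{\mathbf{a}}$ we may assume $\mathbf{a}=\infty$ and $x=(z,t)$ with $t=t_{\mathbf{a}}(x)$. The elements $\gamma\in\Lambda$ with $\dist_{\HH^3}(x,\gamma x)\leq T$ split into those whose ``shadow'' lies deep in the cusp — which, because of how $\Lambda$ interacts with $\big(\Gamma_0\big)_{\mathbf{a}}$, are governed by the rank-$1$ or rank-$2$ parabolic subgroup $\Lambda_{\mathbf{a}}$ — and those that are not; the latter are again controlled by the critical exponent and swept into the $e^{(\delta-1+\eps)T}$ term (using that outside the horoball the displacement is comparable to the displacement at the basepoint, up to a loss absorbed in $\eps$). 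For the parabolic part, if $\mathrm{rank}_{\Lambda}(\mathbf{a})=2$ then $\Lambda_{\mathbf{a}}$ is a lattice in the Euclidean plane $\{t=1\}$ of covolume $\area_{\Lambda}(\mathbf{a})$, and for a parabolic $\gamma$ translating by $w\in\CC$ one has $\cosh\dist_{\HH^3}(x,\gamma x) = 1 + |w|^2/(2t^2)$, so $e^{-\dist_{\HH^3}(x,\gamma x)}\asymp t^2/|w|^2$ when $|w|\gg t$ and $\asymp 1$ otherwise; counting lattice points $w$ with $|w|\leq 2t\sinh(T/2)$ in annuli and summing $t^2/|w|^2$ gives a bound of the shape $\frac{t^2}{\area_{\Lambda}(\mathbf{a})}\log\big(\frac{t^2\sinh(T/2)}{\area_{\Lambda}(\mathbf{a})}\big)$, which is exactly $R_{\mathbf{a}}(x,T)$. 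The rank-$1$ case is identical with a one-dimensional lattice of covolume $\mathrm{length}_{\Lambda}(\mathbf{a})$, giving a single sum $\sum_{k}\, t/(k\cdot\mathrm{length}_{\Lambda}(\mathbf{a}))$ up to $k\asymp t\sinh(T/2)/\mathrm{length}_{\Lambda}(\mathbf{a})$, hence the logarithmic term with $t$ instead of $t^2$; in the rank-$0$ case $\Lambda$ contributes no parabolics and this term is $0$.

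The main obstacle I anticipate is the bookkeeping at the interface between the compact part and the cusps: one must argue that an element $\gamma\in\Lambda$ moving $x$ a bounded hyperbolic distance, when $x$ is high up in the cusp $\mathbf{a}$, is forced to essentially lie in the stabilizer $\Lambda_{\mathbf{a}}$ (more precisely, that the non-parabolic $\gamma$'s either move $x$ by at least roughly $2\log t_{\mathbf{a}}(x)$ or can be reduced to the compact-part count), and to do this uniformly in $x$ and in the cusp. This is a standard thick-thin / Margulis-lemma type argument, but making the constants depend only on $\Gamma_0$, $\Lambda$, $\eps$ — and correctly isolating the $t_{\mathbf{a}}(x)$-dependence rather than absorbing it — requires care. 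Everything else (the dyadic orbit-counting estimate, the explicit parabolic displacement formula, the Euclidean lattice-point sums) is routine.
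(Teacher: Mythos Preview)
Your outline is essentially the paper's strategy, and your parabolic computation is exactly right. But the step you flag as the ``main obstacle'' is where you are actually missing the paper's key idea. You write that $K$ is a ``relatively compact piece'' of $\HH^3$, and then compare displacements at $x\in K$ to a fixed basepoint via the triangle inequality; but the horoball complement in $\HH^3$ is not relatively compact, so this comparison is not uniform in $x$ as written. What makes it work is \emph{normality}: since $\Lambda\triangleleft\Gamma_0$, for any $\eta\in\Gamma_0$ one has
\[
\sum_{\gamma\in\Lambda} e^{-\dist(x,\gamma x)} \;=\; \sum_{\gamma\in\Lambda} e^{-\dist(\eta x,\eta\gamma\eta^{-1}\cdot\eta x)} \;=\; \sum_{\gamma\in\Lambda} e^{-\dist(\eta x,\gamma\cdot\eta x)},
\]
so the sum is $\Gamma_0$-invariant and one may restrict $x$ to a fundamental domain $\calF$ for $\Gamma_0$ (equivalently, for $G=\Gamma_0/\Lambda$ acting on $\Lambda\backslash\HH^3$). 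Now the thick part $K_0=K\cap\calF$ is genuinely compact, and the triangle-inequality comparison goes through with a constant depending only on $\diam(K_0)$.

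The paper then resolves the cusp interface more cleanly than your proposed ``$2\log t_{\mathbf{a}}(x)$'' bound. For hyperbolic $\gamma\in\Lambda$: since $\eps_0$ is chosen below half the systole of $\Gamma_0\backslash\HH^3$, the axis of $\gamma$ must meet $K_0$; displacement is minimized on the axis, so if $\dist(x,\gamma x)\le T$ for $x$ in the thin part, then already $\dist(x',\gamma x')\le T$ for some $x'\in K_0$, and $\gamma$ is already counted in the thick-part estimate. For parabolics fixing a cusp $\mathbf{b}$ of $\Lambda$ \emph{other} than the one $x$ is high in: there is again a point $x'\in K_0$ with larger height $t_{\mathbf{b}}(x')>t_{\mathbf{b}}(x)$, hence smaller displacement, so these too are already in the thick-part count. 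This leaves, per cusp $\mathbf{a}$ of $\Gamma_0$, only the parabolic subgroup $\Lambda_{\mathbf{b}}$ for the single nearest representative $\mathbf{b}$, which is exactly your lattice-point sum. No Margulis-lemma thick-thin bookkeeping beyond the choice of $\eps_0$ is needed.
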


\begin{proof}
We will write $G = \Gamma_0/\Lambda$. This group acts on $\Lambda\backslash\HH^3$ with quotient $\Gamma_0\backslash\HH^3$. We will choose a fundamental domain $\calF$ for this action.

Now we fix some $\eps_0>0$ below the Margulis constant for $\HH^3$ and below half the systole of $\Gamma_0\backslash\HH^3$. We moreover let $K\subset \Lambda\backslash\HH^3$ denote the pre-image of the $(\geq \eps_0)$-thick part of $\Gamma_0\backslash\HH^3$. We note that $K$ is $G$-invariant and that the $G$-action on $K$ is cocompact. If $\Gamma_0$ contains no parabolic elements, then (because of the choice of $\eps_0$), $K = \Gamma_0\backslash\HH^3$. If not, the complement of its image in $\Gamma_0\backslash \HH^3$ consists of disjoint horosphere neighborhoods of the cusps of $\Gamma_0\backslash \HH^3$.

We once and for all fix some $x_0\in\HH^3$ that projects to a point in $\calF\cap K$ under the quotient map $\pi:\HH^3\to\Lambda\backslash\HH^3$. By definition of the critical exponent, for every $\eps'>0$,
\[
\card{\st{\gamma\in\Lambda}{\dist_{\HH^3}(x_0,\gamma\cdot x_0) \leq T}} \ll_{\eps'} e^{(\delta+\eps')\cdot T}
\]
and hence, by choosing our $\eps'<\eps$,
\[
\sum_{\substack{\gamma\in\Lambda \text{ s.t.} \\ \dist_{\HH^3}(x_0,\gamma \cdot x_0) \leq T}} e^{-\dist_{\HH^3}(x_0,\gamma \cdot x_0)} \ll_{\eps,x_0} e^{(\delta-1+\eps)\cdot T}.
\]
In order to turn this into a uniform bound for all $x\in\HH^3$, we will compare the value at $x$ to that at $x_0$.

First of all, we will argue that we can reduce to points in $\pi^{-1}(\calF)$. To see this, suppose that $g\pi(x) \in  \calF$ for some $g\in G$. Choose some $\eta \in \Gamma_0$ that projects to $g\in G$ under the quotient map $\Gamma_0 \to G$ (which means that $\pi(\eta x)\in\calF$). We then have, using that $\Lambda$ is normal in $\Gamma$ and hence $\eta\Lambda = \Lambda \eta$,
\[
\sum_{\substack{\gamma\in\Lambda \text{ s.t.} \\ \dist_{\HH^3}(x,\gamma \cdot x) \leq T}} e^{-\dist_{\HH^3}(x,\gamma \cdot x)} = 
\sum_{\substack{\gamma\in\Lambda \text{ s.t.} \\ \dist_{\HH^3}(\eta x,\eta\cdot \gamma \cdot x) \leq T}} e^{-\dist_{\HH^3}(\eta x,\eta\cdot \gamma \cdot x)} = \sum_{\substack{\gamma\in\Lambda \text{ s.t.} \\ \dist_{\HH^3}(\eta x,\gamma \cdot \eta  x) \leq T}} e^{-\dist_{\HH^3}(\eta x,\gamma \cdot \eta x)}
\]
so indeed, the maximum of this quantity (as a function of $x$, for fixed $T$) is realized in $\pi^{-1}(\calF)$.

Now we first deal with points in the thick part. That is, we write $K_0=K\cap\calF$, which is a compact set, and we suppose that $x\in \pi^{-1}(K_0)$. By the triangle inequality,
\begin{multline*}
\dist_{\HH^3}(x,\gamma\cdot x) \leq \dist_{\HH^3}(x,x_0) + \dist_{\HH^3}(x_0,\gamma\cdot x_0) + \dist_{\HH^3}(\gamma \cdot x_0,\gamma \cdot x) \\
 \leq  \dist_{\HH^3}(x_0,\gamma\cdot x_0) + 2\diam(K_0)
\end{multline*}
and likewise
\[
\dist_{\HH^3}(x,\gamma\cdot x) \geq \dist_{\HH^3}(x_0,\gamma\cdot x_0) - 2 \diam(K_0).
\]
Because the diameter $ \diam(K_0)$ depends on $\Lambda$ and $\Gamma_0$ only, we get that
\[
\sum_{\substack{\gamma\in\Lambda \text{ s.t.} \\ \dist_{\HH^3}(x,\gamma \cdot x) \leq T}} e^{-\dist_{\HH^3}(x,\gamma \cdot x)} \ll 
\sum_{\substack{\gamma\in\Lambda \text{ s.t.} \\ \dist_{\HH^3}(x_0,\gamma \cdot x_0) \leq T}} e^{-\dist_{\HH^3}(x_0,\gamma \cdot x_0)}, 
\]
where the implied constant depends on $\Lambda$ and $\Gamma_0$ only. Note that we have proved slightly more than the estimate. We have proved that for every $T\geq 1$, there is some set of elements $S_T\subset \Lambda$ so that for all $x\in K_0$:
\[
\sum_{\substack{\gamma\in\Lambda \text{ s.t.} \\ \dist_{\HH^3}(x,\gamma \cdot x) \leq T}} e^{-\dist_{\HH^3}(x,\gamma \cdot x)} \leq \sum_{\gamma \in S_T} e^{-\dist_{\HH^3}(x,\gamma x)}.
\]

All that remains is to deal with points $x\in\calF$ in the thin part of $\calF$. So we assume that $x\in \calF-K_0$ and we need to figure out which other contributions to the sum we get.

We first note that there cannot be any new hyperbolic contributions. Indeed, because $\eps_0 \leq \sys(\Gamma_0\backslash\HH^3)/2$, the axis of any hyperbolic element intersects $K_0$ and thus the contribution of any hyperbolic element is accounted for in the $\ll e^{(\delta-1+\eps)\cdot T}$ coming from the sum over the elements in $S_T$, and the individual contributions can only get smaller if we move away from the axis.

So we need to worry about parabolics. An elementary calculation shows that if $\gamma\in\Gamma_0$ is a parabolic element fixing any cusp $\mathbf{a}$ of $\Gamma$ and $t_{\mathbf{a}}(x') > t_{\mathbf{a}}(x)$, then
\[
\dist_{\HH^3}(x',\gamma \cdot x') <  \dist_{\HH^3}(x,\gamma \cdot x).
\]
In words: points higher up in the cusp get translated less far away. An explicit formula will appear in the computations below.

We note that, given a point $x\in \calF-K_0$, and a cusp $\mathbf{a}$ of $\Gamma_0$, there is a unique cusp $\mathbf{b}$ of $\Lambda$ that realizes 
\[
\max\st{ t_{\mathbf{b}}(x) }{ \mathbf{b}\sim_{\Gamma_0}\mathbf{a}}.
\]
Moreover, if there is some other cusps $\mathbf{b}'$ of $\Lambda$ such that $\mathbf{b}'\sim_{\Gamma_0}\mathbf{a}$ then there exists some point $x'\in K_0$ such that $t_{\mathbf{b}'}(x') > t_{\mathbf{b}'}(x)$. In particular, if $\gamma\in\Lambda$ is a parabolic that fixes this cusp $\mathbf{b}'$, we have
\[
\dist_{\HH^3}(x',\gamma\cdot x') < \dist_{\HH^3}(x,\gamma\cdot x)
\]
and so $ \dist_{\HH^3}(x,\gamma x)\leq T$. This means that the corresponding contribution is already covered by the $\ll e^{(\delta-1+\eps)\cdot T}$ coming from the sum over the elements in $S_T$.

The conclusion is that per cusp $\mathbf{a}$ of $\Gamma_0$, we only need to account for the elements of $\Lambda_{\mathbf{b}}$ for a single representative $\mathbf{b}$ of $\mathbf{a}$. We have already noted in Section \ref{sec_geometry_of_cusps} that the rank of $\Lambda_{\mathbf{b}}$ might be below two. In particular, if it vanishes, then there is no further contribution. So we only need to deal with the case in which it's of rank one or two. We will do the computation in the rank two case below, the computation in the rank one case is similar.

In this case, changing coordinates so that our cusp $\mathbf{a}$ is at infinity, we get that 
\[
\Lambda_{\mathbf{a}} = \langle (z,t)\mapsto (z+a,t),(z,t)\mapsto (z+b,t) \rangle
\]
for some $a,b\in\CC$ that are linearly independent over $\RR$. We have
\[
\dist_{\HH^3}\Big( (z,t),(z+m\cdot a + n\cdot b,t) \Big) = 2\sinh^{-1}\left(\frac{\abs{m\cdot a + n\cdot b}}{2t}\right).
\]
So 
\begin{multline*}
\sum_{\substack{\gamma \in \Lambda_{\mathbf{a}} \text{ s.t.}\\ \dist_{\HH^3}(x,\gamma \cdot x) \leq T}} e^{-\dist_{\HH^3}(x,\gamma\cdot x)} \ll \sum_{\substack{(m,n)\in\ZZ^2 \text{ s.t.} \\ \abs{m\cdot a + n \cdot b} \leq 2\cdot t(x) \cdot \sinh(T/2) }} \frac{t(x)^2}{\abs{m\cdot a+n\cdot b}^2}\\ \ll \frac{t(x)^2}{\mathrm{area}_\Lambda(\mathbf{a})} \log\left(\frac{t(x)^2\cdot \sinh(T/2)}{\mathrm{area}_{\Lambda}(\mathbf{a})}\right),
\end{multline*}
where we have used that $\area_{\Lambda}(\mathbf{a})=\abs{\det(a,b)}\cdot\area_{\Gamma_0}(\mathbf{a})$. 
\end{proof}

\subsection{Tangle-freeness}

Now we need a version tangle-freeness. Our random manifolds are not tangle-free in the usual sense: they do not look like $\HH^3$ on a local scale, but rather like the octa-tree. As such, we define:
\begin{dff}
Let $L>0$, let $\Gamma<\Isom(\HH^3)$ be discrete and let $\Lambda < \Gamma$. Then we say that $\Gamma$ is \textbf{$L$-$\Lambda$-tangle-free} at $x\in\Gamma\backslash\HH^3$ if for some lift $\widetilde{x}\in \HH^3$ (and hence for all lifts) of $x$, the set
\[
\Big\{ \gamma\in\Gamma; \; d(\widetilde{x},\gamma \cdot \widetilde{x}) \leq L \Big\} -
\Big\{ \gamma\in\Lambda; \; d(\widetilde{x},\gamma \cdot \widetilde{x}) \leq L \Big\}
\]
is either empty or generates a virtually abelian group. We say $\Gamma$ is $L$-$\Lambda$-tangle-free is it is $L$-$\Lambda$-tangle-free at all $x\in\Gamma\backslash\HH^3$.
\end{dff}

This version of tangle-freeness combined with the proposition above now implies a bound on $L^\infty$-norms of eigenfunctions. 
\begin{prp}\label{prp_tangle_free_deloc}
Let $\Gamma_0<\Isom(\HH^3)$ be a lattice and $\Lambda \triangleleft \Gamma_0$. Moreover suppose that $\Lambda<\Gamma<\Gamma_0$, where $\Gamma$ is some geometrically finite group and $\lambda < \lambda_0 = \lambda_0(\Lambda\backslash\HH^3)$. If $f:\Gamma\backslash\HH^3\to\RR$ is an $L^2$-normalized Laplacian eigenfunction of eigenvalue $\lambda$, then
\[
\abs{f(x)}^2 \ll_{\Gamma_0,\Lambda,\eps}  (1-\lambda) \cdot e^{-2L\cdot  \sqrt{1-\lambda}} \cdot \Big( e^{L\cdot (\sqrt{1-\lambda_0}+\eps )} +R_{\mathrm{cusp},\Gamma}(x,L)\Big) 
\]
uniformly for all $x\in \Gamma\backslash\HH^3$ at which $\Gamma$ is $L$-$\Lambda$-tangle-free and all groups $\Gamma$, where
\[
R_{\mathrm{cusp},\Gamma}(x,L) = \sum_{\substack{\mathbf{a} \text{ a cusp of }\Gamma_0 \\ \text{such that }t_{\mathbf{a}}(x) \geq 1 }} R_{\mathbf{a}}(x,L) +  \sum_{\substack{\mathbf{b} \text{ a cusp of }\Gamma \\ \text{such that }t_{\mathbf{a}}(x) \geq 1 }} R_{\mathbf{b},\Gamma}(x,L),
\]
$R_{\mathbf{a}}(x,L)$ is as in Lemma \ref{lem_kernel_growth} and
\[
R_{\mathbf{b},\Gamma}(x,L) = \frac{t_{\mathbf{b}}(x)^2}{\area_{\Gamma}(\mathbf{b})} \cdot \log\left(\frac{t_{\mathbf{b}}(x)\cdot \sinh(L/2)}{\mathrm{area}_{\Gamma}(\mathbf{b})} \right).
\]
\end{prp}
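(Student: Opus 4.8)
The plan is to combine the pointwise delocalization bound of Proposition \ref{prp_deloc} with the orbit-counting bound of Lemma \ref{lem_kernel_growth}, using $L$-$\Lambda$-tangle-freeness to control the "extra" group elements — those in $\Gamma$ but not in $\Lambda$ that move $\widetilde x$ a distance at most $L$. First I would apply Proposition \ref{prp_deloc} with $T=L$: since $\lambda<1$ and $f$ is an $L^2$-normalized eigenfunction of eigenvalue $\lambda$ on $\Gamma\backslash\HH^3$, we get
\[
\abs{f(x)}^2 \ll \frac{1-\lambda}{\sinh^2(L\sqrt{1-\lambda})} \sum_{\substack{\gamma\in\Gamma\\ \dist_{\HH^3}(\widetilde x,\gamma\widetilde x)\leq L}} e^{-\dist_{\HH^3}(\widetilde x,\gamma\widetilde x)}.
\]
I would then replace $1/\sinh^2(L\sqrt{1-\lambda})$ by $\ll e^{-2L\sqrt{1-\lambda}}$ (valid since $L\sqrt{1-\lambda}$ is bounded below, or trivially because $\sinh u \gg e^u$ once $u$ is bounded away from $0$; for small $u$ one absorbs the discrepancy into the $(1-\lambda)$ prefactor), which already produces the shape of the claimed right-hand side. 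The remaining task is to bound the sum over $\Gamma$ by a constant times $e^{L(\sqrt{1-\lambda_0}+\eps)} + R_{\mathrm{cusp},\Gamma}(x,L)$.

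To estimate the sum over $\Gamma$, split $\Gamma = \Lambda \sqcup (\Gamma\setminus\Lambda)$. The contribution of $\Lambda$ is handled directly by Lemma \ref{lem_kernel_growth} with $T=L$: since $\Lambda\triangleleft\Gamma_0$ has critical exponent $\delta=\delta(\Lambda)$, and by the Elstrodt--Sullivan formula \eqref{eq_ElstrodtSullivan} we have $\lambda_0 = \lambda_0(\Lambda\backslash\HH^3) = \delta(2-\delta)$ when $\delta>1$ (and $\lambda_0=1$ otherwise), so $\delta - 1 = \sqrt{1-\lambda_0}$ in the relevant range; thus Lemma \ref{lem_kernel_growth} gives
\[
\sum_{\substack{\gamma\in\Lambda\\ \dist_{\HH^3}(\widetilde x,\gamma\widetilde x)\leq L}} e^{-\dist_{\HH^3}(\widetilde x,\gamma\widetilde x)} \ll_{\Gamma_0,\Lambda,\eps} e^{(\sqrt{1-\lambda_0}+\eps)L} + R_{\mathrm{cusp}}(x,L),
\]
which produces the $R_{\mathbf{a}}(x,L)$ terms (cusps of $\Gamma_0$) in $R_{\mathrm{cusp},\Gamma}$. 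For the contribution of $\Gamma\setminus\Lambda$, this is exactly where tangle-freeness enters: by hypothesis $\Gamma$ is $L$-$\Lambda$-tangle-free at $x$, so the set of elements of $\Gamma\setminus\Lambda$ moving $\widetilde x$ by at most $L$ either is empty — in which case there is no contribution — or generates a virtually abelian group $H<\Gamma$. A virtually abelian discrete subgroup of $\Isom(\HH^3)$ is virtually cyclic (elementary); if it is virtually loxodromic or trivial, the same thick-part argument as in Lemma \ref{lem_kernel_growth} shows its contribution is absorbed into the $e^{(\sqrt{1-\lambda_0}+\eps)L}$ term; if it is virtually parabolic, it fixes a cusp $\mathbf b$ of $\Gamma$, and I would run the cusp computation from the proof of Lemma \ref{lem_kernel_growth} verbatim — replacing $\area_\Lambda$ by $\area_\Gamma$ — to get exactly the term $R_{\mathbf{b},\Gamma}(x,L)$. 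Summing over the (finitely many, since $\Gamma$ is geometrically finite) cusps of $\Gamma$ that $x$ is deep in yields the second sum in $R_{\mathrm{cusp},\Gamma}(x,L)$. Putting the three pieces together gives the stated bound, uniformly in $x$ and in $\Gamma$ because all implied constants depend only on $\Gamma_0$, $\Lambda$, and $\eps$ (the area/length normalizations of the $\Gamma$-cusps are carried explicitly inside $R_{\mathrm{cusp},\Gamma}$).

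The main obstacle I anticipate is the bookkeeping around the $\Gamma\setminus\Lambda$ term: one must be careful that, at a point $x$ deep in a $\Gamma$-cusp $\mathbf b$ lying over a $\Gamma_0$-cusp $\mathbf a$, one does not double-count — the parabolics in $\Lambda_{\mathbf b'}$ for the *optimal* representative $\mathbf b'\sim_{\Gamma_0}\mathbf b$ are already in the $\Lambda$-sum and covered by $R_{\mathbf a}(x,L)$, while the genuinely new parabolics are those in $\Gamma_{\mathbf b}$ but not $\Lambda$, whose translation lengths are governed by $\area_\Gamma(\mathbf b)$ rather than $\area_\Lambda(\mathbf b)$. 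One also needs tangle-freeness to guarantee that different cusps of $\Gamma$ do not jointly contribute loxodromic elements in a way that escapes the virtually-abelian dichotomy — but this is precisely what the definition rules out: the full set of short non-$\Lambda$ elements generates a single virtually abelian, hence virtually cyclic, group, so at most one cusp of $\Gamma$ (or none) can contribute at any given $x$. After this, the lemma's rank-one and rank-two cusp estimates transfer without change, and the only remaining routine point is checking the elementary inequality $\cosh(u)\geq\sinh(u)$ and the $\sinh$-asymptotics that turn the integral bounds into the logarithmic factors.
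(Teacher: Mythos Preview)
Your proposal is correct and follows essentially the same line as the paper's proof: apply Proposition~\ref{prp_deloc} with $T=L$, split the orbit sum into $\Lambda$ versus $\Gamma\setminus\Lambda$, invoke Lemma~\ref{lem_kernel_growth} together with $\delta(\Lambda)-1=\sqrt{1-\lambda_0}$ for the first piece, and use $L$-$\Lambda$-tangle-freeness to reduce the second piece to a single elementary subgroup whose contribution is either at most linear in $L$ (hyperbolic case, via the uniform lower bound on translation length coming from the lattice $\Gamma_0$) or bounded by $R_{\mathbf{b},\Gamma}(x,L)$ (parabolic case, by the cusp computation from Lemma~\ref{lem_kernel_growth}). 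One small slip: a virtually abelian discrete subgroup of $\Isom(\HH^3)$ need not be virtually cyclic---a rank-$2$ parabolic group is virtually $\ZZ^2$---but your subsequent case split (loxodromic versus parabolic) is the correct one and is unaffected by this.
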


\begin{proof}
Using Lemma \ref{lem_kernel_growth} and that $\delta(\Lambda) = 1 + \sqrt{1-\lambda_0}$, the only thing that remains is to understand the contributions not coming from $\Lambda$. But these generate virtually abelian subgroups. So either they contain a hyperbolic element, in which case they are virtually cyclic and the fact that $\Gamma_0$ is a lattice implies that there is a uniform lower bound on the translation length and hence the contribution to the sum is at most linear in $L$, or they come from a parabolic subgroup (of rank $2$) and the contribution is at most $\ll  R_{\mathbf{b}}(x,L)$ like in the proof of Lemma \ref{lem_kernel_growth}.
\end{proof}

\section{Flattening an eigenfunction}

In this section we assume we are given an octahedral orbifold $M$. We will study the effect of flattening an eigenfunction near an interior face of this orbifold. 

One of the tools we will need is the Ismagilov--Morgan--Simo--Sigal localization formula \cite[Theorem 3.2]{FKS}:

\begin{thm}[Ismagilov--Morgan--Simo--Sigal]\label{thm_IMSS} Let $M$ be a Riemannian manifold and suppose $\{J_i\}_{i\in\calI}$ is a family of smooth functions $J_i:M\to [0,1]$ such that
\begin{enumerate}
\item $\sum_{i\in\calI} J_i^2 \equiv 1$,
\item on any compact subset $K\subset M$, only finitely many $J_i$ are non-zero, and
\item $\sup_{x\in M} \sum_{i\in\calI} \abs{\nabla J_i(x)}^2 < \infty$.
\end{enumerate}
Then 
\[
\Delta = \sum_{i\in\calI} J_i \Delta J_i - \sum_{i\in\calI} \abs{\nabla J_i}^2.
\]
\end{thm}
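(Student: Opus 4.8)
\medskip

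\noindent\textbf{Proof proposal.} The identity is entirely local and algebraic, so the plan is to verify it first as a pointwise statement for smooth functions and then read off the operator (equivalently, quadratic‑form) version. For $f\in C^\infty_c(M)$, hypothesis (2) ensures that $\sum_{i\in\calI}J_i\,\Delta(J_i f)$ is, on every compact set, a \emph{finite} sum of smooth functions, so it is well defined and differentiation commutes with the sum; hypothesis (3) ensures that $\sum_{i\in\calI}\abs{\nabla J_i}^2$ is a bounded measurable function, hence a bounded multiplication operator. With these two remarks in place, both sides of the claimed identity make sense, and the identity extends from $C^\infty_c(M)$ to the relevant domain (e.g.\ $H^1(M)$ in the form picture) by density and continuity.

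The core computation is the Leibniz rule for the (nonnegative) Laplacian $\Delta=-\operatorname{div}\operatorname{grad}$: for smooth $J,f$ one has $\Delta(Jf)=J\,\Delta f+f\,\Delta J-2\langle\nabla J,\nabla f\rangle$. Multiplying by $J_i$, summing over $i\in\calI$, and using hypothesis (1) in the form $\sum_i J_i^2\equiv 1$ to kill the coefficient of $\Delta f$, one gets
\[
\sum_{i\in\calI} J_i\,\Delta(J_i f)=\Delta f+\Big(\sum_{i\in\calI} J_i\,\Delta J_i\Big)f-2\sum_{i\in\calI}J_i\langle\nabla J_i,\nabla f\rangle .
\]
Differentiating $\sum_i J_i^2\equiv 1$ once gives $\sum_i J_i\nabla J_i=\tfrac12\nabla\big(\sum_i J_i^2\big)=0$, so the last sum above vanishes identically and we are left with $\sum_i J_i\,\Delta(J_i f)=\Delta f+\big(\sum_i J_i\,\Delta J_i\big)f$.

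To finish, I would differentiate $\sum_i J_i^2\equiv1$ a second time, i.e.\ take the divergence of $\sum_i J_i\nabla J_i=0$: this yields $0=\sum_i\operatorname{div}(J_i\nabla J_i)=\sum_i\big(\abs{\nabla J_i}^2-J_i\,\Delta J_i\big)$, hence $\sum_i J_i\,\Delta J_i=\sum_i\abs{\nabla J_i}^2$ as functions (the sum converges pointwise by (2), or absolutely by (3)). Substituting back gives
\[
\Delta f=\sum_{i\in\calI} J_i\,\Delta(J_i f)-\Big(\sum_{i\in\calI}\abs{\nabla J_i}^2\Big)f ,
\]
which is exactly the asserted formula, the term $\sum_i J_i\Delta J_i$ being interpreted as the composition $f\mapsto\sum_i J_i\,\Delta(J_i f)$ and $\sum_i\abs{\nabla J_i}^2$ as multiplication.

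I do not expect a genuine obstacle here: the only points requiring care are (i) justifying the termwise differentiation and the interchange of $\Delta$ with the infinite sum, which is immediate from local finiteness (2), and (ii) pinning down the domain on which the operator identity is asserted. The cleanest way to handle (ii) is to argue at the level of quadratic forms: expanding $\nabla(J_i f)=f\nabla J_i+J_i\nabla f$ and summing gives, for $f\in H^1(M)$, the identity $\sum_i\norm{\nabla(J_i f)}_2^2=\norm{\nabla f}_2^2+\int_M\big(\sum_i\abs{\nabla J_i}^2\big)f^2$ (the cross terms again telescoping via $\sum_i J_i\nabla J_i=0$), and polarization recovers the operator statement. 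Hypothesis (3) is precisely what makes the ``potential'' $\sum_i\abs{\nabla J_i}^2$ a bounded perturbation, so the form domain is unchanged and no self‑adjointness subtleties arise.

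\medskip
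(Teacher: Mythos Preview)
Your argument is correct and is the standard derivation of the IMS localization formula. Note, however, that the paper does not actually prove this statement: it is quoted as a known tool and attributed to the reference [FKS, Theorem~3.2], so there is no ``paper's own proof'' to compare against. Your computation---applying the product rule $\Delta(Jf)=J\Delta f+f\Delta J-2\langle\nabla J,\nabla f\rangle$, killing the cross term via $\sum_i J_i\nabla J_i=\tfrac12\nabla(\sum_i J_i^2)=0$, and then identifying $\sum_i J_i\Delta J_i$ with $\sum_i|\nabla J_i|^2$ by taking the divergence of that same relation---is exactly how the result is proved in the original sources, and your remarks on local finiteness (hypothesis~(2)) and the boundedness of the potential (hypothesis~(3)) correctly handle the only convergence issues.
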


Another input is a $3$-dimensional version of a lemma due to Gamburd \cite[Lemma 4.1]{Gamburd}, which we prove with the same strategy as Gamburd:
\begin{lem}\label{lem_cusp_weight}
For any $\lambda_0<1$, there exists a constant $c_{\lambda_0}>0$ such that the following holds. Let $\Gamma < \Isom(\HH^3)$ be a geometrically finite subgroup and let $f:\Gamma\backslash\HH^3\to\RR$ be an $L^2$-normalized Laplacian eigenfunction of eigenvalue $\lambda<\lambda_0$ and for $T>0$, let 
\[
C_T = \st{ (x\cdot u+ y\cdot v, t) }{ x,y\in (0,1), t\geq T } \subset \HH^3
\]
for some $u,v\in\CC$ be a fundamental domain for a horoball in the cusps of $\Gamma$ (normalized so that the cusp lies at $\infty$). Then
\[
\frac{\int_{C_T} f^2 \; \mathrm{d}\vol_{\HH^3}}{\int_{C_{2^kT}} f^2 \; \mathrm{d}\vol_{\HH^3}} \geq (1+c_{\lambda_0})^k \quad \text{for all }k\geq 1
\]
\end{lem}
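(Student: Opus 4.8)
The plan is to adapt Gamburd's argument \cite[Lemma 4.1]{Gamburd} to the three-dimensional cusp. Working in the upper half-space model with the cusp at $\infty$, the eigenfunction $f$ on the cusp region has a Fourier expansion over the dual lattice of $\Lambda_{\mathbf a} = \langle (z,t)\mapsto(z+u,t),(z,t)\mapsto(z+v,t)\rangle$; writing $f = \sum_{\xi} a_\xi(t)\, e_\xi(x,y)$ where $e_\xi$ runs over the characters of the torus $\CC/(\ZZ u + \ZZ v)$, the Laplace equation $\Delta f = \lambda f$ becomes, for each frequency $\xi$, an ordinary differential equation of Whittaker type in $t$. The zero-frequency coefficient satisfies $t^2 a_0'' - t a_0' = \lambda a_0$, so it is a combination of $t^{1+\sqrt{1-\lambda}}$ and $t^{1-\sqrt{1-\lambda}}$, and since $\lambda < \lambda_0 < 1$ both exponents are real; the $L^2$-condition on the cusp, together with the fact that $\mathrm{d}\vol_{\HH^3} = t^{-3}\,\mathrm{d}x\,\mathrm{d}y\,\mathrm{d}t$ and that the region $C_T$ has finite horospherical area, forces $a_0 \equiv 0$ on the cusp (this is exactly where we use that $f$ decays, i.e. is not an Eisenstein contribution — here it is an honest $L^2$ eigenfunction). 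For nonzero $\xi$ the coefficient $a_\xi(t)$ decays like $e^{-c|\xi| t}$ as $t\to\infty$.

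Next I would set $F(T) = \int_{C_T} f^2\,\mathrm{d}\vol_{\HH^3}$. By Parseval over the torus, $F(T) = \mathrm{area}(\CC/(\ZZ u+\ZZ v))\cdot \sum_{\xi\neq 0}\int_T^\infty |a_\xi(t)|^2\, t^{-3}\,\mathrm{d}t$ (the $\xi = 0$ term being absent). The key point is a differential inequality: one shows that $G(T) := \int_{C_T} \left(|\nabla f|^2 - \lambda f^2\right)\mathrm{d}\vol_{\HH^3}$ has a sign and is controlled by $F(T)$, using that the missing zero mode means the relevant ``effective potential'' in the $t$-variable is bounded below by a strictly positive constant depending only on $\lambda_0$. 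Concretely, integrating the Rellich–Pohozaev/Maaß–Selberg-type identity on the truncated cusp $C_T$ and using $\Delta f = \lambda f$, one derives that $-T\,F'(T)$ dominates a fixed multiple of $F(T)$, i.e. there is $c = c_{\lambda_0}>0$ with
\[
-T\,F'(T) \;\geq\; (1 + 2c)\, F(T) \quad\text{for all } T>0.
\]
Integrating this log-derivative inequality from $T$ to $2^kT$ gives $F(T)/F(2^kT) \geq (2^k)^{1+2c} \geq (1+c_{\lambda_0})^k$ after adjusting the constant, which is the claimed bound.

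The main obstacle is establishing the differential inequality with a constant that is uniform over all geometrically finite $\Gamma$ and over the cusp shape $(u,v)$ — in particular, showing that the contribution of the non-zero Fourier modes really does give a gap bounded away from $0$ purely in terms of $\lambda_0$, and is not diluted by very ``short'' cusps (where $|\xi|$ can be large but the area small, or vice versa). Here the scaling symmetry of the cusp helps: rescaling $t$ trades the lattice covolume for the range of integration, so after normalizing one may assume the shortest nonzero frequency has $|\xi|$ comparable to $1$, and then the ODE comparison is genuinely $\Gamma$-independent. A secondary technical point is justifying the integration by parts on $C_T$ despite the noncompactness — this is handled by the exponential decay of the $a_\xi$ and a standard truncation/limiting argument, exactly as in Gamburd's two-dimensional proof and as in the Maaß–Selberg computation already used in the proof of Proposition~\ref{prp_residual_spectrum}.
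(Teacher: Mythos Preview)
Your claim that the $L^2$ condition forces $a_0 \equiv 0$ is incorrect, and this is the main gap. The two solutions of the zero-mode ODE are $t^{1+s}$ and $t^{1-s}$ with $s = \sqrt{1-\lambda} > 0$; against the measure $t^{-3}\,\mathrm{d}t$ on $[T,\infty)$ the first is not integrable but the second \emph{is}, since $\int_T^\infty t^{2(1-s)-3}\,\mathrm{d}t = \int_T^\infty t^{-1-2s}\,\mathrm{d}t < \infty$. An $L^2$ eigenfunction of eigenvalue $\lambda < 1$ is not in general a cusp form: its zero Fourier coefficient in each cusp is $a_0\,t^{1-s}$ with $a_0$ possibly nonzero (think of residues of Eisenstein series, or simply the constant function when $\lambda = 0$). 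The paper's proof keeps this term and checks directly that it too satisfies the required dyadic decay,
\[
\frac{\int_R^{2R} t^{-1-2s}\,\mathrm{d}t}{\int_{2R}^\infty t^{-1-2s}\,\mathrm{d}t} = 2^{2s}-1 > 2^{2s_0}-1 > 0.
\]

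Once the zero mode is retained, your differential inequality in the form $-TF'(T) \geq (1+2c)F(T)$ is also too strong: for the zero-mode contribution alone one has $F_0(T) = \mathrm{const}\cdot T^{-2s}$ and hence $-TF_0'(T) = 2s\,F_0(T)$, and $2s$ can be as small as $2s_0$, which need not exceed $1$. Any positive lower bound on the logarithmic derivative would still yield the conclusion, so this is repairable, but the proposal does not actually establish the inequality for the nonzero modes either---the appeal to a ``Rellich--Pohozaev/Maa\ss--Selberg-type identity'' is only a placeholder. The paper instead works mode by mode after Parseval: for $\mu \neq 0$ the Fourier coefficient is $t\,K_s(2\pi|\mu|t)$, the ratio $\int_R^{2R} K_s(\cdot)^2 t^{-1}\,\mathrm{d}t \big/ \int_{2R}^\infty K_s(\cdot)^2 t^{-1}\,\mathrm{d}t$ is scale-invariant in $|\mu|$, and Gamburd's uniform lower bound on exactly this Bessel integral ratio applies verbatim. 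Combining the zero and nonzero modes gives the case $k=1$, and the general $k$ is then obtained by a short induction.
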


\begin{proof}
As we noted above, the proof uses the same idea as Gamburd's. We explain what needs to modified here. By \cite[Theorem 3.3.1]{ElstrodtGrunewaldMennicke}, we may write a Fourier expansion in the cusp for $f$ of the form
\[
f(z,t) = a_0\cdot  t^{1-s}+ \sum_{\mu \in \left(\ZZ u+\ZZ v\right)-\{0\}} a_{\mu} \cdot t \cdot K_s\Big(2\pi\cdot \abs{\mu}\cdot  t \Big) \cdot e^{2\pi i \langle \mu,z\rangle}
\]
where $\lambda = 1-s^2$ and $K_s$ is the same Bessel function as that defined in \cite[(4.4)]{Gamburd} (see also \cite[p. 66]{MagnusOberhettingerSoni}). So, Parseval's identity yields that
\[
\int_{C_R} \abs{f(z,t)}^2 \mathrm{d}\vol_{\HH^3} = \abs{a_0}^2 \int_R^\infty t^{-1-2s} \mathrm{d}t + \sum_{\mu \in \left(\ZZ u+\ZZ v\right)-\{0\}} \abs{a_{\mu}}^2 \int_R^\infty  \frac{K_s\Big(2\pi\cdot \abs{\mu}\cdot  t \Big)^2}{t} \mathrm{d}t.
\]
We have
\[
\frac{\int_{R}^{2R} t^{-1-2s} \mathrm{d}t}{\int_{2R}^\infty t^{-1-2s} \mathrm{d}t} = 2^{2s}-1 > 2^{2s_0}-1 >0,
\]
where $s_0>0$ is such that $\lambda_0 = 1-s_0^2$.

Moreover, we're lucky and the other integrals that appear on the right hand side are exactly the same as those that appear in Gamburd's proof. He proved\footnote{In fact, Gamburd states the result for $s\in (s_0,\frac{1}{2})$ but the exact same proof works for $s\in (s_0,1)$. The constant $c_{s_0}$ might however get slightly worse.} that there exists a constant $c_{s_0}$ such that for all $s\in (s_0,1)$:
\[
 \int_R^{2R}  \frac{K_s\Big(2\pi\cdot \abs{\mu}\cdot  t \Big)^2}{t} \mathrm{d}t \quad \Bigg/ \quad  \int_{2R}^\infty  \frac{K_s\Big(2\pi\cdot \abs{\mu}\cdot  t \Big)^2}{t} \mathrm{d}t \quad \geq \quad c_{s_0}.
\]

As such, we obtain that
\[
\frac{\int_{C_T-C_{2T}} f^2 \; \mathrm{d}\vol_{\HH^3}}{\int_{C_{2T}} f^2 \; \mathrm{d}\vol_{\HH^3}} > c_{\lambda_0},
\]
which gives us the case $k=1$. The remainder we will prove by recursion. Indeed, using the induction hypothesis that
\[
\frac{\int_{C_T-C_{2^mT}} f^2 \; \mathrm{d}\vol_{\HH^3}}{\int_{C_{2^mT}} f^2 \; \mathrm{d}\vol_{\HH^3}} \geq (1+c_{\lambda_0})^m-1 \quad \text{for all }1\leq m \leq k,
\]
we obtain
\begin{align*}
\int_{C_T-C_{2^{k+1}T}} f^2 \; \mathrm{d}\vol_{\HH^3} & = 
\int_{C_T-C_{2^kT}} f^2 \; \mathrm{d}\vol_{\HH^3}  + \int_{C_{2^{k}T}-C_{2^{k+1}T}} f^2 \; \mathrm{d}\vol_{\HH^3} \\
& \geq \Big((1+c_{\lambda_0})^k-1\Big) \cdot \int_{C_{2^kT}} f^2\; \mathrm{d}\vol_{\HH^3} + c_{\lambda_0} \int_{C_{2^{k+1}T}}  f^2\; \mathrm{d}\vol_{\HH^3}  \\
& =  \Big((1+c_{\lambda_0})^k-1+c_{\lambda_0}\Big) \cdot \int_{C_{2^{k+1}T}} f^2\; \mathrm{d}\vol_{\HH^3} \\
& \quad + \Big((1+c_{\lambda_0})^k-1\Big) \cdot \int_{C_{2^kT}-C_{2^{k+1}T}} f^2\; \mathrm{d}\vol_{\HH^3} \\
& \geq \Big((1+c_{\lambda_0})^{k+1}-1\Big) \cdot \int_{C_{2^{k+1}T}} f^2\; \mathrm{d}\vol_{\HH^3},
\end{align*}
which, after rearranging, gives us the lemma.
\end{proof}

We will now prove:
\begin{prp}
Fix $\lambda_0<1$ and let $D$ denote an ideal hyperbolic triangle. There exists a function $\beta:D\to\RR$ and a constant $B>0$ such that the following holds. Let $X=\Gamma\backslash \HH^3$ be an octahedral orbifold that is $L$-$\Gamma_\infty$-tangle-free, and let $F_1,\ldots,F_k\subset X$ be interior faces (that we will identify with copies of $D$ in $X$). Moreover let $f:X\to\RR$ be an $L^2$-normalized Laplacian eigenfunction of eigenvalue $\lambda \in (0,\lambda_0)$. Then there exists a function $f':X\to\RR$ such that
\begin{itemize}
\item $f'\rvert_{F_i}-\beta$ is constant and this constant does not depend on $i$, for $1\leq i \leq k$,
\item $f'$ has zero average on $X$, and
\item we have that
\[
\frac{\langle f',\Delta f'\rangle}{\langle f',f'\rangle} - \lambda \leq e^{-C_{\lambda_0}\cdot L},
\]
for some constant $C_{\lambda_0}>0$ that depends on $\lambda_0$ only.
\end{itemize}
\end{prp}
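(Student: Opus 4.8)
The plan is to build $f'$ by multiplying $f$ with a carefully chosen cutoff function that is constant (equal to $\beta$) on each interior face $F_i$ and smoothly returns to $1$ away from the faces, at a controlled cost to the Rayleigh quotient. The cutoffs are built out of a single model profile $\beta:D\to\RR$ on an ideal triangle together with a radial-in-the-transverse-direction interpolation over a collar of each $F_i$ of thickness comparable to $L$; concretely I would use a partition-of-unity-type family $\{J_i\}$ adapted to the collars and apply the Ismagilov--Morgan--Simon--Sigal localization formula (Theorem~\ref{thm_IMSS}) to write
\[
\langle f', \Delta f'\rangle = \lambda \langle f',f'\rangle + (\text{error terms involving } \abs{\nabla J_i}^2).
\]
The error terms are supported on the collars, where $\abs{\nabla J_i}^2 \ll 1/L^2$, so after renormalizing $f'$ it remains to bound the mass $\int_{\text{collars}} f^2$ of the eigenfunction in the collar regions and to subtract off the average to enforce the zero-average condition (the average correction is harmless since $f$ is already $L^2$-normalized and orthogonal to constants, so the shift is exponentially small once we know the collar mass is).

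The heart of the matter is therefore to show that $f$ does not localize near the interior faces: I would prove that the $L^2$-mass of $f$ in an $O(L)$-neighborhood of each $F_i$ is at most $e^{-c_{\lambda_0} L}$ times $\norm{f}_2^2$. This is exactly where $\Gamma_\infty$-tangle-freeness enters. First I would pass from the $L^2$-mass bound to a pointwise bound using Proposition~\ref{prp_tangle_free_deloc}: at any point $x$ where $X$ is $L$-$\Gamma_\infty$-tangle-free, $\abs{f(x)}^2 \ll (1-\lambda)e^{-2L\sqrt{1-\lambda}}\big(e^{L(\sqrt{1-\lambda_0}+\eps)} + R_{\mathrm{cusp},\Gamma}(x,L)\big)$, and since $\lambda<\lambda_0$ and $\sqrt{1-\lambda}>\sqrt{1-\lambda_0}$ the leading exponential is genuinely negative, $e^{-2L\sqrt{1-\lambda}+L\sqrt{1-\lambda_0}} \leq e^{-L\sqrt{1-\lambda_0}}$ roughly. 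Then for the parts of $F_i$ that venture into the cusps, where the pointwise bound degrades because of the $R_{\mathrm{cusp},\Gamma}$ term, I would instead use Lemma~\ref{lem_cusp_weight}: the eigenfunction mass decays geometrically as one moves up a cusp, so the contribution of the portion of the collar at height $\geq T$ is $\leq (1+c_{\lambda_0})^{-k}$ of the total — and since an interior face $F_i$ is a fixed compact ideal triangle, only a bounded (in terms of the fixed geometry of $O$) amount of it sits below any fixed height, while the collar reaches height $\sim e^{L}$ up the cusps, giving an $e^{-c_{\lambda_0}L}$ bound there too. Combining the thick-part estimate (from Proposition~\ref{prp_tangle_free_deloc}) with the cuspidal estimate (from Lemma~\ref{lem_cusp_weight}) yields the collar-mass bound with a uniform constant $C_{\lambda_0}$.

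The main obstacle I anticipate is bookkeeping the interaction between the $k$ different interior faces and the cusps simultaneously: the collars of different $F_i$ can overlap, and they can wind around the cusps in complicated ways, so one must be careful that the partition of unity $\{J_i\}$ still satisfies the bounded-gradient and bounded-overlap hypotheses of Theorem~\ref{thm_IMSS} with constants independent of $k$ and of $\Gamma$. I would handle this by noting that the interior faces are all lifts of boundary faces of the fixed octahedron $O$, so locally (in a fundamental domain for $\Gamma_\infty$, i.e. in the octa-tree $T$) the configuration of faces and their collars is the same bounded-complexity picture repeated, and tangle-freeness guarantees that within any ball of radius $L$ the manifold looks exactly like $T$; hence the geometry controlling the cutoffs is uniformly bounded. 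A secondary technical point is choosing the transverse profile of the cutoff so that $\beta$ itself (the deviation of $f'$ from the target constant on $F_i$) is a genuine fixed function on $D$ independent of everything — this just means fixing once and for all a smooth radial bump in the direction normal to $D$ and declaring $\beta$ to be its restriction, which is legitimate because the normal exponential map from $D$ is the same in every octahedral orbifold. Once the collar-mass bound is in hand, assembling $f'$ and estimating its Rayleigh quotient is routine.
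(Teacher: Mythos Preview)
Your ingredients are right --- IMSS localization, the pointwise delocalization bound of Proposition~\ref{prp_tangle_free_deloc}, and the cusp-mass decay of Lemma~\ref{lem_cusp_weight} are exactly what the paper uses --- but the specific implementation you sketch has a genuine gap. You propose collars of width comparable to $L$, with $\abs{\nabla J_i}^2\ll 1/L^2$, and then want to bound $\int_{\text{collar}} f^2$ by $e^{-c_{\lambda_0}L}$ via the pointwise bound. This does not work: an $L$-tube around a totally geodesic ideal triangle in $\HH^3$ has volume $\asymp e^{2L}$ (the metric in Fermi coordinates is $ds^2+\cosh^2(s)\,g_{\HH^2}$), whereas the pointwise bound from Proposition~\ref{prp_tangle_free_deloc} is only of order $e^{-L(2\sqrt{1-\lambda}-\sqrt{1-\lambda_0}-\eps)}$, and $2\sqrt{1-\lambda}-\sqrt{1-\lambda_0}<2$ for $\lambda$ near $\lambda_0$. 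The product blows up, so you cannot get the collar $L^2$-mass small this way.

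The paper avoids this by using \emph{fixed-width} cutoffs (so $\abs{\nabla J}\asymp 1$) and letting the smallness come entirely from the pointwise bound on $f$. Concretely, the construction is in two stages: first flatten $f$ in each cusp incident to an $F_i$ above a height $\tau_{\mathbf{a}}$ (chosen $\sim e^{\sqrt{1-\lambda_0}\,L}$), using a cutoff in $t$ supported in $[\tau_{\mathbf{a}},2\tau_{\mathbf{a}}]$; then flatten the result near each face using a cutoff in the transverse coordinate supported in a fixed slab of the fundamental corner $\calC$. Both IMSS error terms are integrals of $\abs{\nabla J}^2 f^2$ over regions of \emph{bounded} volume, and the $L^\infty$ bound on $f$ there (which is valid up to height $2\tau_{\mathbf{a}}$, with the cusp correction $R_{\mathrm{cusp}}$ now controlled by the choice of $\tau_{\mathbf{a}}$) gives the exponential decay. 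The $L^2$-mass lost in the cusp above $\tau_{\mathbf{a}}$ is handled separately by Lemma~\ref{lem_cusp_weight}. A pleasant side-effect is that your ``main obstacle'' evaporates: fixed-width cutoffs sit inside the twelve copies of $\calC$ adjacent to each $F_i$ and never overlap, so there is no bookkeeping of interactions between faces. Finally, note that in the paper's construction $f'$ actually vanishes on each $F_i$ before the average is subtracted, so one may take $\beta\equiv 0$; your description of $f'$ as $f$ times a cutoff equal to $\beta$ on $F_i$ is not quite the right picture.
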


\begin{proof}
We first once and for all fix a smooth function $J:\RR\to [0,1]$ such that
\[
J(x) = \left\{\begin{array}{cc}
0 & \text{if } \abs{x}\leq 1 \\
1 & \text{if } \abs{x} \geq 2
\end{array}\right..
\]

We will build our new function $f'$ in three steps. First we flatten out $f$ in the cusps that intersect our faces, then we flatten the result out near the required faces and finally we will remove the average of the function from it, to make it orthogonal to constant functions.

We first compute what happens to $\langle f,\Delta f\rangle$. We'll start with the cusps. Every face $F_i$ is incident to three cusps. Let $\mathbf{a}$ be one of these cusps, which we will normalize to lie at $\infty$ (again using our fixed Möbius transformation $h_{\mathbf{a}}$ from Section \ref{sec_geometry_of_cusps}). We will choose a height $\tau_{\mathbf{a}}\geq 1$ above which we will flatten out $f$. This height will depend on the parameter $L$. How it does, we will determine below. 

Now set: 
\[
C_{\tau_{\mathbf{a}}} = \st{x\in \calF}{t_{\mathbf{a}}(x) \geq \tau_{\mathbf{a}}},
\]
where $\calF$ is a fundamental domain for $\Gamma$ chosen so that $\mathbf{a}$ indeed lies at $\infty$.

For $x\in C_{\tau_{\mathbf{a}}}$, we define 
\[
J_{\mathrm{cusp}}(x) = 1-J(t_{\mathbf{a}}(x)/\tau_{\mathbf{a}}),
\]
which allows us to define
\[
f_1(x) = \left\{
\begin{array}{ll}
f(x) \cdot J_{\mathrm{cusp}}(x) & \text{if } t_{\mathbf{a}}(x) \geq \tau_{\mathbf{a}} \\
f(x) & \text{otherwise}.
\end{array}\right.
\]
Theorem \ref{thm_IMSS} now implies that
\begin{align*}
\langle f_1 , \Delta f_1 \rangle - \langle f , \Delta f \rangle & = \int_{C_{\tau_{\mathbf{a}}}}  \Big(J_{\mathrm{cusp}}(x) \Delta \Big(  J_{\mathrm{cusp}}(x) \cdot f(x)\Big) - \Delta f(x)\Big)\cdot f(x) \; \mathrm{d}\vol_{\HH^3}(x) \\
& = \int_{C_{\tau_{\mathbf{a}}}} \abs{\nabla J_{\mathrm{cusp}}(x)}^2 \cdot f(x)^2 + \abs{\nabla \sqrt{1-J_{\mathrm{cusp}}^2(x)}}^2 \cdot f(x)^2 \; \mathrm{d}\vol_{\HH^3}(x) \\
& \quad - \int_{C_{\tau_{\mathbf{a}}}}  \Delta \Big(\sqrt{1-J_{\mathrm{cusp}}(x)^2} f(x) \Big) \cdot \sqrt{1-J_{\mathrm{cusp}}(x)^2} \cdot f(x) \; \mathrm{d}\vol_{\HH^3}(x).
\end{align*}
Now we use the fact that $\Delta$ is a positive operator, and obtain that
\begin{align*}
\langle f_1 , \Delta f_1 \rangle - \langle f , \Delta f \rangle & = \int_{C_{\tau_{\mathbf{a}}}}  \Big(J_{\mathrm{cusp}}(x) \Delta \Big(  J_{\mathrm{cusp}}(x) \cdot f(x)\Big) - \Delta f(x)\Big)\cdot f(x) \; \mathrm{d}\vol_{\HH^3}(x) \\
& \leq \int_{C_{\tau_{\mathbf{a}}}} \abs{\nabla J_{\mathrm{cusp}}(x)}^2 \cdot f(x)^2 + \abs{\nabla \sqrt{1-J_{\mathrm{cusp}}^2(x)}}^2 \cdot f(x)^2 \; \mathrm{d}\vol_{\HH^3}(x) 
\end{align*}
Both the gradients in the integral on the right hand side above are supported in 
\[
\st{x\in C_{\tau_{\mathbf{a}}}}{t_{\mathbf{a}}(x) \in [\tau_{\mathbf{a}},2\tau_{\mathbf{a}}] }
\]
This means that 
\[
\langle f_1 , \Delta f_1 \rangle - \langle f , \Delta f \rangle \leq  \norm{f\vert_{t_{\mathbf{a}}(x) \in [\tau_{\mathbf{a}}, 2\tau_{\mathbf{a}}]}}_\infty^2 \cdot \int_{C_{\tau_{\mathbf{a}}}} \abs{\nabla J_{\mathrm{cusp}}(x)}^2  + \abs{\nabla \sqrt{1-J_{\mathrm{cusp}}^2(x)}}^2  \; \mathrm{d}\vol_{\HH^3}(x).
\]
Now we pick a fundamental parallelogram $\calD$ for the action of $\Gamma_{\mathbf{a}}$ on $\CC$ and write
\begin{align*}
\int_{C_{\tau_{\mathbf{a}}}} \abs{\nabla J_{\mathrm{cusp}}(x)}^2 \; \mathrm{d}\vol_{\HH^3}(x)
& =
 \int_{\calD} \int_{h_\mathbf{a}}^{2\tau_{\mathbf{a}}}\frac{1}{t} \left( \frac{\partial}{\partial t}J\left(\frac{t}{\tau_{\mathbf{a}}}\right)\right)^2 \;\mathrm{d}t\;\mathrm{d}x \;\mathrm{d}y \\
& =
 \frac{1}{\tau_{\mathbf{a}}^2} \int_{\calD} \int_1^2\frac{1}{u} \left( \frac{\partial}{\partial u}J(u)\right)^2 \;\mathrm{d}u\;\mathrm{d}x \;\mathrm{d}y \\ 
& = \frac{\area_{\Gamma}(\mathbf{a})}{\tau_{\mathbf{a}}^2}  \cdot C,
\end{align*}
and likewise
\[
\int_{C_{\tau_{\mathbf{a}}}} \abs{\nabla \sqrt{1-J_{\mathrm{cusp}}(x)^2}}^2 \; \mathrm{d}\vol_{\HH^3}(x) \leq \frac{\area_{\Gamma}(\mathbf{a})}{\tau_{\mathbf{a}}^2}  \cdot C.
\]
where $C>0$ is some constant that depends on our choice of $J$ only. Plugging this back into our inequality and using Proposition \ref{prp_tangle_free_deloc}, we obtain $ \langle f_1,\Delta f_1\rangle - \langle f,\Delta f\rangle \ll  E_1$ where
\begin{equation}\label{eq_E1}
E_1(i) =\sum_{\substack{\mathbf{a} \text{ cusp of }\Gamma \\ \text{incident to }F_i}} e^{-2L\cdot \sqrt{1-\lambda}}\cdot \left( \frac{\area_\Gamma(\mathbf{a})}{\tau_{\mathbf{a}}^2} \cdot e^{L\cdot (\sqrt{1-\lambda_0}+\eps)} +  \log\Big(\tau_{\mathbf{a}}\cdot\sinh(L/2)\Big) \right),
\end{equation}
where we have again (like in the proof of Lemma \ref{lem_kernel_growth}) used the fact that $t_{\mathbf{a}}$ is large for at most one cusp $\mathbf{a}$ of $\Gamma$ at a time. This error will  need to be summed over all the faces.

Once this has been done for all the cusps adjacent to the interior faces $F_1,\ldots F_k$, we obtain a function that we will still call $f_1:X\to\RR$. Our next step is to flatten the resulting function near the faces $F_i$ in the thick part of $X$. To make this precise, we use $12$ copies of the fundamental corner $\calC$ to parametrize a neighborhood of our face $F_i$. In the coordinates of $\calC$, we will write
\[
J_{\mathrm{face}}(x,y,t) = J(4x) \quad \text{for } (x,y,t)\in\calC
\]
and define
\[
f_2(x) = f_1(x) \cdot J_{\mathrm{face}}(x) \quad \text{for } x\in\calC.
\]
Writing $\calC_1,\ldots\calC_{12}$ for the copies of the fundamental corner adjacent to $F_i$, we may again apply Theorem \ref{thm_IMSS} to obtain
\begin{align*}
\langle f_2,\Delta f_2\rangle - \langle f_1,\Delta f_1\rangle & = \sum_{i=1}^{12} \int_{\calC_i} \abs{\nabla J_{\mathrm{face}}(x)}^2\cdot f_1(x)^2 + \abs{\nabla \sqrt{1-J_{\mathrm{face}}(x)^2}}^2 \cdot f_1(x)^2 \; \mathrm{d}\vol_{\HH^3}(x) \\
 & \quad +  \int_{\calC_i} \Delta \left(\sqrt{1-J_{\mathrm{face}}(x)^2}\cdot f_1(x)\right) \cdot \sqrt{1-J_{\mathrm{face}}(x)^2} \cdot f_1(x)\; \mathrm{d}\vol_{\HH^3}(x) \\
&  \leq \sum_{i=1}^{12} \int_{\calC_i} \abs{\nabla J_{\mathrm{face}}(x)}^2\cdot f_1(x)^2 + \abs{\nabla \sqrt{1-J_{\mathrm{face}}(x)^2}}^2 \cdot f_1(x)^2\;\mathrm{d}\vol_{\HH^3}(x) \\
& \leq C \cdot \norm{f\vert_{K_{\tau}}}_\infty^2
\end{align*}
where $C>0$ is a constant depending only on our choice of the function $J$ (the volume that enters into it can be bounded by the volume of $12$ fundamental corners). Here we have used the fact that $\Delta$ is a positive operator in the second step, the fact that $\abs{f_1}\leq \abs{f}$ and the fact that $f_1$ is supported in $K_h$, where 
\[
K_{\tau} = \st{x\in X}{t_{\mathbf{a}}(x) \leq 2 \tau_{\mathbf{a}} \text{ for all cusps } \mathbf{a} \text{ of }\Gamma}.
\]
So using Proposition \ref{prp_tangle_free_deloc} again, we obtain that $
\langle f_2,\Delta f_2\rangle - \langle f_1,\Delta f_1\rangle \ll E_2$ with 
\begin{equation}\label{eq_E2}
E_2(i) = e^{-2L\cdot \sqrt{1-\lambda}}\cdot\left( e^{L\cdot(\sqrt{1-\lambda_0}+\eps)} + \sum_{\substack{\mathbf{a} \text{ a cusp of }\Gamma \\ \text{incident to }F_i}} \frac{\tau_{\mathbf{a}}}{\area_{\Gamma}(\mathbf{a})} \cdot \log\left( \frac{\tau_{\mathbf{a}}\cdot \sinh(L/2)}{\mathrm{area}_{\Gamma}(\mathbf{a})}\right)\right),
\end{equation} 
which we again need to sum over all the faces when we redefine $f_2$ to be the function obtained from applying the above to all the faces $F_i$.

Finally, we need to remove the average from the function $f_2$ in order to make it orthogonal to constant functions. So we set 
\[
f'(x) = f_2(x) - \frac{1}{\vol(X)}\int_X f_2(y)\;\mathrm{d}\vol(y).
\]
Because the Laplacian annihilates constant functions, we obtain that 
\begin{equation}\label{eq_fDfbound}
\langle f',\Delta f'\rangle = \langle f_2, \Delta f_2\rangle \ll \langle f,\Delta f \rangle + \sum_{i=1}^k E_1(i) + E_2(i).
\end{equation}
Now we need to understand how $\norm{f'}^2$ compares to $\norm{f}^2$. We have
\begin{align*}
\langle f',f'\rangle & = \langle f_2,f_2 \rangle - \frac{1}{\vol(X)} \left(\int_X f_2(y)\;\mathrm{d}\vol(y)\right)^2 \\
& = \langle f,f \rangle +2 \langle f_2-f,f\rangle + \langle f_2-f,f_2-f\rangle - \frac{1}{\vol(X)} \left(\int_X f_2(y)\;\mathrm{d}\vol(y)\right)^2.
\end{align*}
If we use that $\int_X f(y) \;\mathrm{d}\vol(y)=0$, we obtain, using Cauchy--Schwartz,
\begin{align*}
\frac{1}{\vol(X)} \left(\int_X f_2(y)\;\mathrm{d}\vol(y)\right)^2 
& =  
\frac{1}{\vol(X)} \left(\int_X f_2(y)-f(y)\;\mathrm{d}\vol(y)\right)^2 \\
& \leq \langle f_2-f,f_2-f \rangle.
\end{align*}
This means that, again using Cauchy--Schwartz and the fact that $\norm{f}=1$,
\[
\langle f',f' \rangle \geq \langle f,f\rangle - 2\cdot \norm{f_2-f}^2.
\]
Now we have run out generalities and we need to use the estimates we have. We have
\begin{align*}
\norm{f_2-f}^2 = \sum_{i=1}^k \sum_{j=1}^{12} \int_{\calC_{ij}} \left(1-J_{\mathrm{face}}(x)\cdot J_{\mathrm{cusp}}(x)\right)^2 f(x)^2 \mathrm{d}\vol(x) \\
\quad + \sum_{\substack{\mathbf{a} \text{ a cusp of }\Gamma \\  \text{incident to } F_i }} \int_{C_{\tau_{\mathbf{a}}}} (1-J_{\mathrm{cusp}}(x))^2\cdot f(x)^2 \; \mathrm{d}\vol(x),
\end{align*}
where $\calC_{ij}$ for $j=1,\ldots,12$ are the copies of the fundamental corner incident to the face $F$. Also note that the first term abuses notation a bit. $J_{\mathrm{cusp}}(x)$ is only defined for $x$ in a horoball. What we mean here is that it is the same function extended to $1$ in the region where it was not previously defined. Note that by construction (the fact that $\tau_{\mathbf{a}}\geq 1$ for all cusps $\mathbf{a}$) the cusp neighborhoods on which the functions $J_{\mathrm{cusp}}$ are defined don't intersect. So, given $x\in\calC_{ij}$, there is no ambiguity as to with respect to which cusp the function $J_{\mathrm{cusp}}(x)$ is defined. We beg the reader who has made it this far into the text for forgiveness. In any event, we get rid of these factors immediately and just write
\[
\norm{f_2-f}^2 \leq \sum_{i=1}^k \sum_{j=1}^{12} \int_{\calC_{ij}} f(x)^2 \mathrm{d}\vol(x)  + \sum_{\substack{\mathbf{a} \text{ a cusp of }\Gamma \\  \text{incident to } F_i }} \int_{C_{\tau_{\mathbf{a}}}}  f(x)^2 \; \mathrm{d}\vol(x),
\]

We will now start with the first type of integrals appearing above. We note that the fundamental corner $\calC_{ij}$ only interacts with one of the cusps of $\Gamma$, that we will call $\mathbf{a}_{ij}$ in what follows. We have
\[
\int_{\calC_{ij}} f(x)^2\; \mathrm{d}\vol(x) = \int_{\calC_{ij}\cap C_{\tau_{\mathbf{a}}}} f(x)^2\; \mathrm{d}\vol(x) +  \int_{\calC_{ij}- C_{\tau_{\mathbf{a}}}} f(x)^2\; \mathrm{d}\vol(x).
\]
For the first of these, we use Lemma \ref{lem_cusp_weight} and the fact that $\norm{f}=1$ to conclude that
\[
\int_{\calC_{ij}\cap C_{\tau_{\mathbf{a}}}} f(x)^2\; \mathrm{d}\vol(x) \leq \frac{1}{\tau_{\mathbf{a}}^{C_{\lambda_0}}}
\]
for some uniform constant $C_{\lambda_0}>0$ depending on $\lambda_0$ only. For the second integral we use Proposition \ref{prp_tangle_free_deloc} again to obtain
\begin{multline*}
  \int_{\calC_{ij}- C_{\tau_{\mathbf{a}_{ij}}}} f(x)^2\; \mathrm{d}\vol(x) \ll \norm{f\vert_{\calC_{ij}- C_{\tau_{\mathbf{a}_{ij}}}}}_{\infty}^2 \\
    \ll e^{-4L\cdot \sqrt{1-\lambda}}\cdot \left(e^{L\cdot (\sqrt{1-\lambda_0}+\eps)}+\frac{\tau_{\mathbf{a}_{ij}}}{\area_\Gamma(\mathbf{a}_{ij})}\cdot \log\left(\frac{\tau_{\mathbf{a}_{ij}}\cdot\sinh(L/2)}{\area_{\Gamma}(\mathbf{a}_{ij})} \right)\right)^2.
\end{multline*}
The second type of integrals are now also already covered. We have, using Lemma \ref{lem_cusp_weight},
\[
\int_{C_{\tau_{\mathbf{a}}}} f(x)^2\mathrm{d}\vol(x) \leq \frac{1}{\tau_{\mathbf{a}}^{C_{\lambda_0}}}.
\]
Putting the above together and using the fact that the involved cusp neighbourhoods don't intersect, we get that 
\begin{equation}\label{eq_ffbound}
\langle f',f'\rangle \gg \langle f,f\rangle - \sum_{i=1}^k E_3(i)
\end{equation}
with
\begin{equation}\label{eq_E3}
E_3(i) = \sum_{\substack{\mathbf{a} \text{ a cusp} \\
\text{of }\Gamma \text{ in-} \\ \text{cident} \\ \text{to }F_i}} \frac{1}{\tau_{\mathbf{a}}^{C_{\lambda_0}}} + e^{-2L\cdot\sqrt{1-\lambda}} \cdot \left( e^{L\cdot (\sqrt{1-\lambda_0}+\eps)} + \frac{\tau_{\mathbf{a}}}{\area_\Gamma(\mathrm{a})}\cdot \log\left(\frac{\tau_{\mathbf{a}}\cdot\sinh(L/2)}{\area_\Gamma(\mathbf{a})}\right) \right).
\end{equation}

Our function $f'$ satisfies the first two conditions of the statement. Now looking at equations \eqref{eq_E1}, \eqref{eq_E2} and \eqref{eq_E3}, we see that if we set
\[
\tau_{\mathbf{a}} = \max\Big\{\sqrt{\area_\Gamma(\mathbf{a})},e^{\sqrt{1-\lambda_0}\cdot L}\Big\},
\]
we obtain that $\langle f',f'\rangle >0$ and, filling in \eqref{eq_fDfbound} and \eqref{eq_ffbound},
\[
\frac{\langle f', \Delta f'\rangle}{\langle f',f'\rangle} \ll \frac{\lambda + k\cdot e^{-C_{\lambda_0}\cdot L}}{1-k\cdot e^{-C_{\lambda_0}\cdot L}} \ll \lambda + k\cdot e^{-C_{\lambda_0}\cdot L},
\]
for some constant $C_{\lambda_0}>0$ that depends on $\lambda_0$ only.
\end{proof}

\section{The bass note spectrum of the set of hyperbolic $3$-orbifolds}

In this section, we use our model to produce many different spectral gaps. That is, we prove Theorem \ref{thm_main3}, which follows directly from the next result:

\begin{thm}
For every $\eta>0$ and $\lambda \in [0,\lambda_0(T)]$ there exists a subgroup $\Gamma<\Gamma_{\mathrm{SA}}$ of finite index such that
\[
\abs{\lambda_1(\Gamma\backslash\HH^3) - \lambda} < \eta.
\]
\end{thm}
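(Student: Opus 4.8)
The plan is to follow the strategy of \cite{Magee_bassnotes}: to realise the target value as $\lambda_1$ of a suitable finite cover of a well-chosen $M_n$, reached by a chain of elementary surgeries, each of which changes $\lambda_1$ by less than $\eta$. Fix $\eta>0$ and write $\lambda^\ast\in[0,\lambda_0(T)]$ for the target. First I would fix some $\lambda_0\in(\lambda_0(T),1)$ and choose $n$ large, depending only on $\eta$, so that with positive probability the random orbifold $M_n$ is connected, has $\vol(M_n)$ sufficiently large in terms of $\eta$, satisfies $\lambda_0(T)-\eta<\lambda_1(M_n)<\lambda_0$ (Theorem~\ref{thm_main2}(1)), and is $L$-$\Gamma_\infty$-tangle-free for an $L$ large enough that $2e^{-C_0L}<\eta$, where $C_0$ is the constant $C_{\lambda_0}$ of the flattening proposition of the previous section; for the random model this tangle-free scale is $L\asymp\log n$ with high probability. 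Each of these is a high probability event, so such an $M_n$ exists; fix it and set $\Gamma_n=\pi_1(M_n)$, so that $\Gamma_\infty\triangleleft\Gamma_n<\Gamma_{\mathrm{SA}}$ with $[\Gamma_{\mathrm{SA}}:\Gamma_n]<\infty$. Let $\mathcal Y$ be the set of connected degree two covers $Y\to M_n$ with $\Gamma_\infty\le\pi_1(Y)$; every such $Y$ is again an octahedral orbifold, $\pi_1(Y)$ has finite index in $\Gamma_{\mathrm{SA}}$, and $\pi_1(Y)\backslash\HH^3$ is an arithmetic hyperbolic $3$-orbifold, so it suffices to find $Y\in\mathcal Y$ with $|\lambda_1(Y)-\lambda^\ast|<\eta$.

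The elementary move is the following. Given $Y\in\mathcal Y$ and an interior face $F$ of $M_n$ that \emph{splits} in $Y$, that is, whose preimage in $Y$ is a disjoint union of two ideal triangles $\widetilde F_1,\widetilde F_2$, let $Y'$ be obtained from $Y$ by cutting along $\widetilde F_1\cup\widetilde F_2$ and regluing with the two sheets over $F$ interchanged; then $Y'\in\mathcal Y$ and this operation is an involution. I claim $|\lambda_1(Y)-\lambda_1(Y')|<\eta$. Since $M_n$ is $L$-$\Gamma_\infty$-tangle-free and the defining collection of group elements can only shrink when one passes to a subgroup still containing $\Gamma_\infty$, both $Y$ and $Y'$ are $L$-$\Gamma_\infty$-tangle-free; moreover $\lambda_1(Y),\lambda_1(Y')\le\lambda_1(M_n)<\lambda_0$. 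Let $f$ be an $L^2$-normalised eigenfunction on $Y$ of eigenvalue $\lambda:=\lambda_1(Y)<\lambda_0$ and apply the flattening proposition to $Y$ with the two interior faces $\widetilde F_1,\widetilde F_2$: it produces $f'\colon Y\to\RR$ of zero average, with $f'|_{\widetilde F_i}=\beta+c$ for a fixed function $\beta$ and a constant $c$ not depending on $i$, and with $\langle f',\Delta f'\rangle/\langle f',f'\rangle\le\lambda+2e^{-C_0L}$. Because $f'$ takes the common value $\beta+c$ on both lifts $\widetilde F_1,\widetilde F_2$ of $F$ and is continuous across each of them in $Y$, it descends to an $H^1$ function on $Y'$ with the same Dirichlet energy, the same (positive) $L^2$-norm, and still zero average; hence $\lambda_1(Y')\le\lambda+2e^{-C_0L}<\lambda_1(Y)+\eta$. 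Exchanging the roles of $Y$ and $Y'$, which is legitimate since $Y$ is recovered from $Y'$ by the same move, gives the reverse inequality and the claim.

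It remains to produce two elements of $\mathcal Y$, joined by a path of moves, with $\lambda_1$ near the two endpoints of $[0,\lambda_0(T)]$. For the lower endpoint I would take $Y_0\in\mathcal Y$ to be the cover corresponding to the class in $H^1(M_n;\ZZ/2\ZZ)$ Poincar\'e--Lefschetz dual to a single interior face $F$ of $M_n$: such an $F$ is necessarily non-separating, its preimage $\widetilde F\subset Y_0$ is a two-sided totally geodesic ideal triangle that cuts $Y_0$ into two isometric pieces of volume $\vol(M_n)$, and the unit-width mollified difference of the indicators of the two pieces is a test function orthogonal to the constants with Rayleigh quotient $\ll 1/\vol(M_n)<\eta$, so $\lambda_1(Y_0)<\eta$. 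For the upper endpoint I would take $Y_1\in\mathcal Y$ a uniformly random element — equivalently, a random $\ZZ/2\ZZ$-lift of $M_n$: by the $2$-lift version of the strong convergence input behind Theorems~\ref{thm_main1} and \ref{thm_main2} (a variant of \cite{BordenaveCollins}, feeding the new-eigenvalue theorem of Section~\ref{sec_near_optimal}), one has $\lambda_1(Y_1)>\lambda_0(T)-\eta$ with high probability, so, intersecting with the high probability events above, we may assume our fixed $M_n$ admits such a $Y_1$. Finally, one shows, as in \cite{Magee_bassnotes}, that $Y_0$ and $Y_1$ are joined by a path $Y_0=Z_0,Z_1,\dots,Z_m=Y_1$ in $\mathcal Y$ with consecutive terms related by a move — in fact that the move graph on $\mathcal Y$ is connected, since moves add duals of interior faces, these classes span $H^1(M_n;\ZZ/2\ZZ)$ (cutting $M_n$ along all of its interior faces leaves simply connected underlying pieces), and the constraint that one may only surger across a face that splits is navigated using that $M_n$ carries many interior faces in each mod $2$ homology class.

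Along this path one has $\lambda_1(Z_0)<\eta$, $\lambda_1(Z_m)>\lambda_0(T)-\eta\ge\lambda^\ast-\eta$, and $|\lambda_1(Z_{j+1})-\lambda_1(Z_j)|<\eta$ for every $j$ by the bound established above; a discrete intermediate value argument then yields some $j$ with $|\lambda_1(Z_j)-\lambda^\ast|<\eta$, and $\Gamma=\pi_1(Z_j)$ is the desired finite index subgroup of $\Gamma_{\mathrm{SA}}$ (this simultaneously gives the first inclusion of Theorem~\ref{thm_main3}). Since the hard delocalisation estimates are already packaged in the flattening proposition, the bound on a single move is essentially formal once the move has been set up; the point requiring the least routine effort is rather the last paragraph — establishing connectivity of the move graph while respecting the ``split'' legality constraint, which in the octahedral setting replaces the surface bookkeeping of \cite{Magee_bassnotes}, and ensuring that the high-$\lambda_1$ endpoint can be realised inside $\mathcal Y$ in the component of $Y_0$, where the $2$-lift strong convergence (and some care with the isolated sign-type summands that occur for non-generic lifts) is needed.
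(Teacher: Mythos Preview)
Your strategy is the paper's: fix a large $\Gamma_\infty$-tangle-free $M_n$, look at its degree-two covers, show via the flattening proposition that one switching along an interior face moves $\lambda_1$ by less than $\eta$, and interpolate between a low endpoint and a high endpoint by a discrete intermediate value argument. The analytic core (the single-move bound) is handled identically.

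Where you diverge, the paper's choices are considerably more economical. For the low endpoint the paper simply admits the \emph{disconnected} double cover $Y_{\mathrm{disconnected}}\to M_n$, for which $\lambda_1=0$ is trivial; this makes your Cheeger-type construction unnecessary (and incidentally, in that construction the preimage of $F$ in $Y_0$ is two ideal triangles, not one, though cutting along both still gives two isometric pieces). For the high endpoint the paper uses the explicit orientation double cover $DM_n=(\Gamma_n\cap\Gamma(2))\backslash\HH^3$, whose spectral gap $>\lambda_0(T)-\eta/2$ is already established in Theorem~\ref{thm_main2}; your random $2$-lift instead requires a strong convergence statement for random signings of a \emph{fixed} $M_n$, which is not the Bordenave--Collins theorem quoted in the paper and is not proved here --- as written this is a gap. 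Finally, once the disconnected cover is allowed, connectivity of the move graph becomes immediate: degree-two covers are signings of the dual $4$-regular graph $G_n$ and a simple switching flips exactly one sign. Your restriction to connected covers with $\Gamma_\infty\le\pi_1(Y)$ turns this into a nontrivial combinatorial problem, and your ``split'' legality constraint is in fact vacuous (the preimage of any interior face in any degree-two cover is always two triangles, since interior faces lie in the manifold locus of $M_n$); the real issue your restriction creates is rather that a switching can \emph{disconnect} $Y$, taking you out of $\mathcal Y$, which the paper sidesteps entirely.
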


\begin{proof} What we will actually do is, for $\eta>0$ fixed, produce a finite index subgroup $\Gamma'<\Gamma_{\mathrm{SA}}$ such that, such that the first eigenvalues of the index two subgroups of $\Gamma'$ are $\eta$-dense in $[0,\lambda_0(T)]$. The idea is to first produce subgroups whose spectral gaps lie at the ends of the interval and then produce the intermediate values using a process we will call switching between covers of degree two.

So, given $\Gamma<\Gamma_{\mathrm{SA}}$ of finite index, we consider the set of degree two covers
\[
\calH_2(\Gamma) = \st{ Y \to \Gamma\backslash\HH^3 }{ Y \text{ a degree }2\text{ cover}}.
\]
This set crucially includes the cover $Y_{\mathrm{disconnected}}\to\Gamma\backslash\HH^3$ by two copies of $\Gamma\backslash\HH^3$, which has the property that 
\[
\lambda_1\left(Y_{\mathrm{disconnected}}\right) = 0,
\]
which thus gives us the lower end of the interval, independently of the subgroup $\Gamma<\Gamma_{\mathrm{SA}}$.

In order to produce a spectral gap at the higher end of the interval, we use methods similar to those in Section \ref{sec_near_optimal}. We already know that we can produce subgroups $\Gamma<\Gamma_{\mathrm{SA}}$ of finite index whose spectral gap is at least $\lambda_0(\Gamma_\infty\backslash\HH^3)-\eta/2$. Now we need to find a subgroup of index two of such a group. Recall that $\Gamma_{\mathrm{SA}}$ itself contains the congruence group $\Gamma(2)$ of $\PGL(2,\ZZ[i])$ as an index two subgroup. First we note that by Propositions \ref{prp_main} and \ref{prp_eigenvalue_gamma_2},
\[
\inf\Big( \sigma\left(\Delta_{\Gamma(2)\backslash\HH^3}\right)-\{0\} \Big) = 1  > \lambda_0(T).
\]
Moreover, $\Gamma(2) = \ker(\mathrm{or}:\Gamma_{\mathrm{SA}}\to\ZZ/2\ZZ)$, where $\mathrm{or}$ denotes the orientation homomorphism. The infilonian $\Gamma_\infty=\ker(\Gamma_{\mathrm{SA}}\to (\ZZ/2\ZZ)^{\ast 4})$ is contained in all our random subgroups $\Gamma_n$. Moreover, $\mathrm{or}:\Gamma_\infty\to\ZZ/2\ZZ$ is surjective because $\Gamma_\infty$ contains four of the reflections in the faces of $O$. This means that, if $\Gamma_n<\Gamma_{\mathrm{SA}}$ is a random subgroup of index $n$ whose monodromy factors through the map $\Gamma_{\mathrm{SA}}\to (\ZZ/2\ZZ)^{\ast 4}$, then $\mathrm{or}:\Gamma_n\to\ZZ/2\ZZ$ is surjective and hence that $[\Gamma_n:\Gamma_n\cap\Gamma(2)] = 2$.

In fact, the group $\Gamma_n\cap\Gamma(2)$ is exactly the Kleinian group corresponding to the double $DM_n$ of $M_n$ described above. So we already know from Theorem \ref{thm_main2} that
\[
\lambda_1\Big((\Gamma\cap\Gamma(2)) \backslash \HH^3\Big) > \lambda_0(\Gamma_\infty\backslash\HH^3) - \eta/2
\] 
thus giving us the other end of the interval.

Given a degree two cover $Y\to \Gamma_n\backslash\HH^3$ we say that $Y'\to\Gamma_n\backslash\HH^3$ is obtained from $Y$ by a \textbf{simple switching} if there exists an interior face $F$ of $\Gamma_n\backslash\HH^3$ such that $Y'$ is obtained from $Y$ by cutting $Y$ open along the pre-images $F_1$ and $F_2$ of $F$ under the covering map and regluing them. 

Any pair of elements in $\calH_2(\Gamma_n)$ can be connected by a finite sequence of simple switchings. Indeed, as manifolds with boundary, our orbifolds are homotopy equivalent to the graphs $G_n$ dual to the decomposition of $\Gamma_n\backslash\HH^3$ into copies of $O$. This means that degree two covers of $\Gamma_n\backslash\HH^3$  correspond one-to-one with degree two covers of $G_n$. We can move between degree two covers of $G_n$ by changing the cover near the two lifts of an edge (in graph theory, this gives the correspondence between two-covers and signings). Because of the duality, this process of changing the cover along the pre-images of an edge corresponds to cutting the cover $Y$ open along the two pre-images of an interior face of $\Gamma_n\backslash\HH^3$ and regluing the resulting faces in the opposite way.

Now we want to apply the machinery from the previous sections to prove that a simple switching doesn't change the spectral gap by too much, so we need some tangle-freeness. By \cite[Lemma 23]{BordenaveCollins}, for $T < (1/4-\eps)\cdot \log_3(n)$, the Schreier graph associated to a random $\varphi_n \in \Hom((\ZZ/2\ZZ)^{\ast 4},\sym_{2n})$ is $T$-tangle-free with high probability. This Schreier graph is exactly the dual graph $G_n$ mentioned in the previous paragraph. We claim that this implies that, for any fixed $L>0$, the corresponding random covers $\Gamma_n\backslash\HH^3$ of the super Apollonian orbifold $\Gamma_{\mathrm{SA}}\backslash\HH^3$ are $L$-$\Gamma_\infty$-tangle-free with high probability. This in turn implies the same for their degree two covers. 

To see the tangle-freeness of $\Gamma_n\backslash\HH^3$, we argue as follows. First fix some $\eps_0=\eps_0(L)>0$ below the Margulis constant of $\HH^3$ and let $K_n\subset \Gamma_n\backslash\HH^3$ denote the lift of the $\eps_0$-thick part of $\Gamma_{\mathrm{SA}}\backslash\HH^3$. We choose $\eps_0$ small enough so that closed geodesics on $\Gamma_{\mathrm{SA}}\backslash\HH^3$ of length $\leq L$ lie at distance at least $L$ from the $\eps_0$-thin part of $\Gamma_{\mathrm{SA}}\backslash\HH^3$.

Now, by construction, if a point $\widetilde{x}\in \HH^3$ projects to a point $x\in \Gamma_\infty\backslash\HH^3-K_n$, the only translates of $x$ at distance at most $L$ from $x$ (that don't come from parabolics in $\Gamma_\infty$) generate a cyclic subgroup. So $\Gamma_n$ is $L$-$\Gamma_\infty$-tangle-free at $x$.

For a point $x\in K_n$, we observe that, once $L$ has been fixed, $\eps_0$ is uniform. This means that $x$ lies at bounded distance from the dual graph $G_n$ to the decomposition of $\Gamma_n\backslash\HH^3$. So in particular, there cannot be two elements of $\Gamma_n-\Gamma_\infty$ that generate a non-elementary subgroup of $\Gamma_n$ and both translate a lift $\widetilde{x}\in\HH^3$ of $x$ by distance at most $L$. Indeed, the corresponding loops would lie at uniformly bounded distance from the dual graph $G_n$ and hence create a tangle, which is forbidden (once $n$ is large enough) by the result of Bordenave--Collins. We note that with an argument like in \cite[Section 5]{Magee_bassnotes}, this can be made much more effective and one would get logarithmic tangle-freeness. For our purposes, uniform tangle-freeness suffices.

Indeed, given a degree two cover $Y\to\Gamma\backslash\HH^3$, an interior face $F$ of $\Gamma_n\backslash\HH^3$ and a Laplacian eigenfunction $f:Y\to\RR$ of eigenvalue $\lambda_1(Y)$, we can produce a function $f':Y\to\RR$ that has looks the same on the pre-images $F_1$ and $F_2$ of $F$ and whose Rayleigh quotient is at most $\lambda_1(Y) + \eps(L)$, where $\eps(L)\to 0$ as $L\to\infty$. As such, $f'$ defines a function on the cover $Y'$ obtained from the simple switching in $F$ as well. Which means that $\lambda_1(Y') < \lambda_1(Y) + \eps(L)$. Doing the argument in reverse gives us a bound in the opposite direction. So, by choosing $n$ large enough, we obtain that for any $Y,Y'\in\calH_2(\Gamma)$ that differ by a simple switching
\[
\abs{\lambda_1(Y) - \lambda_1(Y')} < \eta,
\]
which proves our theorem for finite volume orbifolds. Moreover, our orbifolds are arithmetic, so it also proves the arithmetic version.
\end{proof}

\bibliography{bib}
\bibliographystyle{alpha}

\end{document}